\documentclass[11pt]{article}
\usepackage{amssymb,amsthm,authblk}
\usepackage{amsmath,latexsym,epsf}
\usepackage[sort&compress,numbers]{natbib}
\usepackage{changepage}
\usepackage{mathrsfs,enumitem}
\usepackage[all]{xy}
\numberwithin{equation}{section}
\usepackage{color}
\usepackage[colorlinks,linkcolor=blue,anchorcolor=orange,
citecolor=red,urlcolor=green]{hyperref}
\usepackage[mathlines,displaymath]{lineno}
\let\oldalign\align
\let\oldendalign\endalign

\renewenvironment{align}
{\linenomathNonumbers\oldalign}
{\oldendalign\endlinenomath}

\begin{document}
\theoremstyle{definition}
\newcommand{\nc}{\newcommand}
\def\PP#1#2#3{{\mathrm{Pres}}^{#1}_{#2}{#3}\setcounter{equation}{0}}
\def\mr#1{{{\mathrm{#1}}}\setcounter{equation}{0}}
\def\mc#1{{{\mathcal{#1}}}\setcounter{equation}{0}}
\def\mb#1{{{\mathbb{#1}}}\setcounter{equation}{0}}
\def\Mcc{\mc{C}}
\def\Mbe{\mb{E}}
\def\Mcp{\mc{P}}
\def\Mcg{\mc{G}}
\def\Mcx{\mc{X}}
\def\QP{\mc{QP}(\xi)}
\def\QI{\mc{QI}(\xi)}
\def\id#1{\xi\text{-id}{\left(#1\right)}}
\def\pd#1{\xi\text{-pd}{\left(#1\right)}}
\def\Qgpd#1{\xi\text{-}\mc{QG}\text{pd}{\left(#1\right)}}
\def\Qgid#1{\xi\text{-}\mc{QG}\text{id}{\left(#1\right)}}
\def\Qid#1{\xi\text{-}\mc{Q}\text{id}{\left(#1\right)}}
\def\Qpd#1{\xi\text{-}\mc{Q}\text{pd}{\left(#1\right)}}
\def\qgpd#1{\mc{QG}\text{pd}{\left(#1\right)}}
\def\qgid#1{\mc{QG}\text{id}{\left(#1\right)}}
\def\QGpdR#1{\mc{QG}\text{pd}_R{\left(#1\right)}}
\def\pdR#1{\text{pd}_R{\left(#1\right)}}
\def\QpdR#1{\mc{Q}\text{pd}_R{\left(#1\right)}}

\def\QGP{\mc{QGP}(\xi)}
\def\QGI{\mc{QGI}(\xi)}
\def\qGP{\mc{QGP}}
\def\qGI{\mc{QGI}}
\def\extri{(\mc{C},\mb{E},\mathfrak{s})}
\def\GP{\mc{G}\mc{P}(\xi)}
\def\GI{\mc{G}\mc{I}(\xi)}
\def\P{\mc{P}(\xi)}
\def\Extri{\mb{E}\text{-triangle}}
\def\ext{\xi \text{xt}_{\xi}}
\def\Ext{\text{Ext}_{R}}
\def\Extqgp{\text{Ext}_{\qGP}}
\def\Extqgi{\text{Ext}_{\qGI}}
\def\QGext{\mc{QG}\text{Ext}_R}
\def\>{\longrightarrow}

\newtheorem{defn}{\bf Definition}[section]
\newtheorem{cor}[defn]{\bf Corollary}
\newtheorem{prop}[defn]{\bf Proposition}
\newtheorem{thm}[defn]{\bf Theorem}
\newtheorem{lem}[defn]{\bf Lemma}
\newtheorem{rem}[defn]{\bf Remark}
\newtheorem{exam}[defn]{\bf Example}
\newtheorem{fact}[defn]{\bf Fact}
\newtheorem{cond}[defn]{\bf Condition}
\def\bskip#1{{\vskip 20pt}\setcounter{equation}{0}}
\def\sskip#1{{\vskip 5pt}\setcounter{equation}{0}}
\newcommand{\myvskip}{\par\vspace{10pt}}
\def\bg#1{\begin{#1}\setcounter{equation}{0}}
	\def\ed#1{\end{#1}\setcounter{equation}{0}}

\title{Relative quasi-Gorenstein homological dimensions in
extriangulated categories\footnote{Supported by the Yunnan Provincial Department of Education Science Research Fund Project (Project No. 2026J2379)}}
\renewcommand{\Authfont}{\small\normalfont}
\renewcommand{\Affilfont}{\small\normalfont}

\author[a,b]{Zhenggang He\thanks{Corresponding author: zhenggang\_he@163.com}}
\author[c]{Jifen Liu}
\author[a]{Jiaqun Wei}

\affil[a]{School of Mathematical Sciences,
Zhejiang Normal University, Jinhua 321004, China}

\affil[b]{Department of Public Foundation,
Dali Nursing Vocational College, Dali 671006, China}

\affil[c]{School of Mathematical Sciences,
Laboratory of Mathematics and Complex Systems, MOE,
Beijing Normal University, Beijing 100875, China}
\date{}
\maketitle
\baselineskip 15pt
%
%
\vspace{-1cm}
\vskip 10pt%
\noindent{\bf Abstract}:
Let $(\mathcal C,\mathbb E,\mathfrak s)$ be an
extriangulated category equipped with a proper class
$\xi$ of $\mathbb E$-triangles.
We establish stability under iteration for the quasi-$\xi$-Gorenstein
projective and injective subcategories and extend the relative
Ext-vanishing criteria for their finite homological dimensions to larger
coefficient subcategories. An example in a bounded homotopy category
shows that
$\mathcal{P}(\xi)\subsetneq\mathcal{QGP}(\xi)\subsetneq\mathcal{GP}(\xi)$.
For module categories, every quasi-Gorenstein projective
module over a left perfect ring or a commutative Noetherian
ring of finite Krull dimension is projective.
When a commutative ring admits a finite chain of trace and
nilpotent quotients ending in an Artinian ring and the
endomorphism rings at the trace steps have finite right
global dimension, we construct finite families of
quasi-projective test modules that detect prescribed
upper bounds on projective dimension for modules of
bounded cardinality through Ext vanishing in finitely
many consecutive degrees.
\myvskip


\noindent{\bf Keywords}:~Extriangulated categories;
Quasi-$\xi$-Gorenstein projective objects;
Quasi-$\xi$-Gorenstein injective objects;
Relative homological dimensions;
Trace ideals;
Test modules
\myvskip

\noindent{\bf MSC2020}: 18E05; 18G10; 18G20; 18G25
%
%
\vskip 30pt

\section{Introduction}
\quad~Extriangulated categories were introduced by Nakaoka and Palu as a unified framework that generalizes both exact and triangulated categories \cite{HY}. This formalism consolidates numerous classical results from these two distinct settings into a single, coherent theory. Moreover, it reveals a broader landscape of homological algebra by encompassing concrete examples that are neither exact nor triangulated \cite{HY, Zhou}. Thus, the study of extriangulated categories not only bridges previously separate theories but also provides a versatile foundation for further structural exploration.

Hu, Zhang and Zhou \cite{JDP} developed a theory of relative homological algebra in extriangulated categories. This theory unifies two existing frameworks: Beligiannis's relative homological algebra in triangulated categories \cite{BEL} and the classical homological algebra in exact categories in the sense of Quillen. The unification is achieved by fixing a suitable class $\xi$ of $\mathbb{E}$-triangles, known as a proper class. This class $\xi$ forms the foundation for defining and studying relative exact sequences, $\xi$-projective objects, $\xi$-injective objects, and the related homological dimensions.

In the category of modules, Auslander and Bridger \cite{Aus} first introduced the notion of modules of G-dimension zero, generalizing the class of finitely generated projective modules over a commutative Noetherian ring. To extend this idea beyond the finitely generated setting, Enochs and Jenda \cite{Enochs} subsequently defined Gorenstein projective modules over arbitrary associative rings, which are not necessarily finitely generated, and dually introduced Gorenstein injective modules. This laid the foundation for what is now known as Gorenstein homological algebra, a field that has since been extensively developed, see for example \cite{Avramov,Enochs1,LW,Holm}.

Building on the notion of modules of G-dimension zero, Beligiannis  introduced $\mathcal{X}$-Gorenstein objects in an additive category $\mathcal{C}$ in \cite{Beligiannis}, where $\mathcal{X}$ is a contravariantly finite subcategory of $\mathcal{C}$ and every $\mathcal{X}$-epic has a kernel in $\mathcal{C}$. Asadollahi and Salarian \cite{Asadollahi} and Ren and
Liu \cite{RL} extended Gorenstein homological algebra
to triangulated categories by adapting the approach
of Enochs and Jenda \cite{Enochs1} from abelian
categories. They introduced
$\mathcal{E}$-Gorenstein projective and
$\mathcal{E}$-Gorenstein injective objects and the
corresponding homological dimensions.
In the extriangulated setting, Hu, Zhang and
Zhou \cite{JDP} introduced $\xi$-Gorenstein projective
and $\xi$-Gorenstein injective dimensions and studied
their properties. These dimensions were further
characterized in terms of derived functors
in \cite{JZP}.

In the module category over a ring $R$, the concept of a quasi-projective module was introduced by Wu and Jans \cite{LET}. Specifically, an $R$-module $M$ is said to be quasi-projective if for any $R$-module $A$, any epimorphism $g : M \to A$, and any morphism $f : M \to A$, there exists an endomorphism $\phi \in \operatorname{End}_R(M)$ such that $f = g\phi$. This notion was later extended to general Abelian categories by Rangaswamy and Vanaja \cite{KM}, who introduced the quasi-projective cover of an object. They showed that, in an Abelian category with enough projectives, the existence of quasi-projective covers for all objects guarantees the existence of projective covers. Further results and developments on quasi-projective modules and related structures can be found in \cite{Baba, Fuchs, KM, LET}.

Mohammadi Aghjeh Mashhad introduced quasi-Gorenstein
projective and injective modules in \cite{AF} and
studied their behavior in short exact sequences and
their finite homological dimensions. The relative
theory in extriangulated categories was developed
in \cite{He}, including closure properties,
mixed-resolution characterizations and Ext-vanishing
criteria for finite quasi-Gorenstein dimensions.

In this paper, we prove stability under iteration
for the quasi-$\xi$-Gorenstein projective and injective
subcategories (see Theorem~\ref{nice}).
The coefficient subcategories $\mathcal{QP}(\xi)$ and
$\mathcal{QI}(\xi)$ remain fixed throughout the
respective iterations; see \cite{JDPPR} for related
stability results.
For objects of finite quasi-$\xi$-Gorenstein
projective dimension, we show that the coefficient
subcategory in the Ext-vanishing criterion
of \cite{He} can be restricted to $\mathcal{P}(\xi)$
or enlarged to $\widehat{\mathcal{QP}}(\xi)$
(see Theorem~\ref{thm4.4}).
The corresponding injective criteria are obtained
dually (see Theorem~\ref{thm4.8}).

In the bounded homotopy category of finitely generated
modules over the dual numbers, we consider the relative
structure determined by short exact sequences in
cohomology. The inclusions
$\mathcal{P}(\xi)\subseteq\mathcal{QGP}(\xi)
\subseteq\mathcal{GP}(\xi)$ are both strict, and the
subcategories $\mathcal{QP}(\xi)$ and
$\mathcal{QGP}(\xi)$ are incomparable. The induced
extriangulated structure is neither exact nor
triangulated (see Example~\ref{separatingexample}).

For module categories with their usual exact structure,
we prove that $\mathcal{QGP}(R)=\operatorname{Proj}R$
whenever $R$ is left perfect or commutative Noetherian
of finite Krull dimension
(see Theorems~\ref{thm:perfect} and~\ref{thm:finite-krull}).

Test modules for projectivity and finite homological
dimensions are studied in \cite{Jothilingam,CDT}
for finitely generated modules over commutative
Noetherian rings. Let $R$ be a commutative ring,
$T$ a finitely generated $R$-module and $N$ an
$R$-module. If $\operatorname{Hom}_R(T,N)$ has finite
projective dimension as a right
$\operatorname{End}_R(T)$-module, then there exists
a bounded augmented complex ending in $N$, with
all other terms in $\operatorname{Add}_R T$ and
homology modules annihilated by $\operatorname{tr}_R(T)$
(see Lemma~\ref{lem:trace}).
This construction yields a reduction modulo the
trace ideal for the coefficient class associated
to a complete quasi-projective resolution
(see Proposition~\ref{prop:reduction}).

Suppose now that a commutative ring $R$ admits a
finite chain of quotients by trace ideals and
nilpotent ideals ending in an Artinian ring.
Assume that the modules defining the trace ideals
are nonzero and finitely generated and that their
endomorphism rings have finite right global
dimension. For each infinite cardinal
$\lambda\geq|R|$, we construct a finite family
$\mathscr Q_\lambda$ of quasi-projective $R$-modules
and determine an integer $c\geq0$ such that, for
any $R$-module $X$ with $|X|\leq\lambda$ and any
integer $n\geq0$, $\operatorname{pd}_R X\leq n$
if and only if $\operatorname{Ext}_R^{n+i}(X,Q)=0$
for all $Q\in\mathscr Q_\lambda$ and $1\leq i\leq c+1$
(see Theorem~\ref{thm:finite-test}).
Here $\mathscr Q_\lambda$ and $c$ are independent
of $X$ and $n$, and $X$ need not be finitely
generated or have finite quasi-Gorenstein
projective dimension.
In particular, $\mathcal{QGP}(R)=\operatorname{Proj}R$.
For $R=k[[x,y]]/(xy)$, where $k$ is a field,
the test family is described explicitly in
Example~\ref{ex:node-test}, and three consecutive
Ext degrees suffice.

The paper is organized as follows.
In Section~2, we recall some basic definitions and results.
In Section~3, we establish the stability of the
quasi-$\xi$-Gorenstein projective and injective
subcategories under iteration.
In Section~4, we study the corresponding relative
homological dimensions.
In Section~5, we give applications to module categories,
including projectivity criteria and finite families
of quasi-projective test modules.

\section{Preliminaries}
\quad~Throughout the paper,
$\Mcc=(\Mcc,\Mbe,\mathfrak{s})$ denotes an
extriangulated category equipped with a proper class
$\xi$ of $\Mbe$-triangles. We assume that $\Mcc$ has
enough $\xi$-projective and $\xi$-injective objects and
satisfies Condition~\ref{WIC}. All subcategories are
assumed to be full and closed under isomorphisms.

We use the terminology of Nakaoka and Palu
\cite{HY}. Thus $\Mbe:\Mcc^{\mathrm{op}}\times\Mcc
\to\mathbf{Ab}$ is a biadditive functor and
$\mathfrak{s}$ is an additive realization satisfying
the axioms in \cite[Definition~2.12]{HY}. If
$\delta\in\Mbe(C,A)$ is realized by a sequence
$A\xrightarrow{x}B\xrightarrow{y}C$, we write
\[
    A\xrightarrow{x}B\xrightarrow{y}C
    \overset{\delta}{\dashrightarrow}
\]
and call it an $\Mbe$-triangle. The morphisms $x$ and
$y$ are called an inflation and a deflation,
respectively. The extension $\delta$ will be omitted
when it is not needed. A morphism of $\Mbe$-triangles is
a commutative diagram
\[
    \xymatrix{
        A\ar[r]^x\ar[d]^a
        &B\ar[r]^y\ar[d]^b
        &C\ar[d]^c\ar@{-->}[r]^{\delta}
        &{}\\
        A'\ar[r]^{x'}
        &B'\ar[r]^{y'}
        &C'\ar@{-->}[r]^{\delta'}
        &{}
    }
\]
for which $(a,c)$ is a morphism of the corresponding
$\Mbe$-extensions.

We shall use the following standard facts.

\begin{lem}[{\cite[Corollary~3.5]{HY}}]\label{FJ}
Let
\[
    \xymatrix{
        A\ar[r]^x\ar[d]^a
        &B\ar[r]^y\ar[d]^b
        &C\ar[d]^c\ar@{-->}[r]^{\delta}
        &{}\\
        A'\ar[r]^{x'}
        &B'\ar[r]^{y'}
        &C'\ar@{-->}[r]^{\delta'}
        &{}
    }
\]
be a morphism of $\Mbe$-triangles. The following
conditions are equivalent:

(1) $a$  factors through $x$;

(2) $a_*\delta=c^*\delta'=0$;
    
(3) $c$ factors through $y'$.

In particular, an $\Mbe$-triangle
$A\xrightarrow{x}B\xrightarrow{y}C\dashrightarrow$
splits if and only if $x$ is a section, or equivalently,
if and only if $y$ is a retraction.
\end{lem}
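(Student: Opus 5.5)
The plan is to derive the statement from the axioms of an extriangulated category, working with the long exact sequences that $\Mbe$-triangles induce on $\mathcal{C}(-,-)$ and $\Mbe(-,-)$ (see \cite{HY}). We will prove (1)$\Rightarrow$(2)$\Rightarrow$(3) and then obtain (3)$\Rightarrow$(2)$\Rightarrow$(1) by the dual argument. Throughout we use that $(a,c)$ is a morphism of extensions, that is, $a_*\delta=c^*\delta'$.

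For (1)$\Rightarrow$(2), suppose $a=a'x$ for some $a'\colon B\to A'$. Then $a_*\delta=a'_*(x_*\delta)$. Since $A\xrightarrow{x}B\xrightarrow{y}C\overset{\delta}{\dashrightarrow}$ is an $\Mbe$-triangle, $x_*\delta=0$; this is the composite of two consecutive maps in the exact sequence $\mathcal{C}(C,-)\to\mathcal{C}(B,-)\to\Mbe(C,A)\to\Mbe(C,B)$, and it is part of \cite[Lemma~3.2]{HY}. Hence $a_*\delta=0$, and therefore also $c^*\delta'=0$.

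For (2)$\Rightarrow$(3), apply the exact sequence
\[
\mathcal{C}(C,B')\xrightarrow{\;y'_*\;}\mathcal{C}(C,C')\xrightarrow{\;(\delta')^\sharp\;}\Mbe(C,A')
\]
from \cite[Corollary~3.12]{HY}, in which $(\delta')^\sharp(c)=c^*\delta'$. Since $c^*\delta'=0$, exactness gives $c=y'c'$ for some $c'\colon C\to B'$. The implications (3)$\Rightarrow$(2)$\Rightarrow$(1) are dual. If $c=y'c''$, then $c^*\delta'=c''^*(y'^*\delta')=0$. Conversely, if $a_*\delta=0$, exactness of
\[
\mathcal{C}(B,A')\xrightarrow{\;x^*\;}\mathcal{C}(A,A')\xrightarrow{\;\delta_\sharp\;}\Mbe(C,A')
\]
shows that $a$ factors through $x$.

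For the splitting statement, apply the equivalence to the morphism of $\Mbe$-triangles from the given triangle to itself with $(a,b,c)=(1_A,1_B,1_C)$. Then $x$ is a section if and only if $1_A$ factors through $x$, if and only if $\delta=0$ (split), if and only if $1_C$ factors through $y$, that is, $y$ is a retraction. Here $\delta=0$ is exactly the definition of splitting.

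The only step needing real care is securing the exactness of the two Hom–$\Mbe$ sequences in the middle terms. If those are not taken directly from \cite{HY}, they must be derived from axioms (ET3) and (ET3)$^{\rm op}$ applied to the split triangle realizing $0\in\Mbe(C,A')$; that derivation is where the main work lies.
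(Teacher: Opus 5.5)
Your proof is correct and follows the standard argument behind \cite[Corollary~3.5]{HY}, which the paper cites rather than proves. The vanishing $x_*\delta=0$ and $(y')^*\delta'=0$ gives (1)$\Rightarrow$(2) and (3)$\Rightarrow$(2), exactness of the Hom--$\mathbb{E}$ sequences at $\mathcal{C}(A,A')$ and $\mathcal{C}(C,C')$ gives the converses, and the splitting criterion is the case $(a,b,c)=(1_A,1_B,1_C)$.

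Two minor slips, neither of which affects correctness. First, the sequence you quote to justify $x_*\delta=0$ is garbled; use $\mathcal{C}(C,C)\to\mathbb{E}(C,A)\xrightarrow{x_*}\mathbb{E}(C,B)$ evaluated at $1_C$, or (ET3) directly. Second, the middle exactness you flag as the main work is immediate from the realization condition on $\mathfrak{s}$ applied to $(a,1_C)\colon\delta\to0$ and $(1_{A'},c)\colon0\to\delta'$, where $0\in\mathbb{E}(C,A')$ is realized by the split sequence.
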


\begin{lem}[{\cite[Proposition~3.15]{HY}}]\label{BH}
The following statements hold.

\textup{(1)}
Let $ A_1\xrightarrow{x_1}B_1\xrightarrow{y_1}C
    \dashrightarrow$ and $A_2\xrightarrow{x_2}B_2\xrightarrow{y_2}C
    \dashrightarrow$
be $\Mbe$-triangles. There exists an object $M$ and a
commutative diagram
\[
    \xymatrix{
        &A_2\ar[d]^{m_2}\ar@{=}[r]
        &A_2\ar[d]^{x_2}\\
        A_1\ar@{=}[d]\ar[r]^{m_1}
        &M\ar[r]^{e_1}\ar[d]^{e_2}
        &B_2\ar[d]^{y_2}\\
        A_1\ar[r]^{x_1}
        &B_1\ar[r]^{y_1}
        &C
    }
\]
such that
$A_1\xrightarrow{m_1}M\xrightarrow{e_1}B_2
\dashrightarrow$
and
$A_2\xrightarrow{m_2}M\xrightarrow{e_2}B_1
\dashrightarrow$
are $\Mbe$-triangles.

\textup{(2)}
Let $A\xrightarrow{x_1}B_1\xrightarrow{y_1}C_1
    \dashrightarrow$and $A\xrightarrow{x_2}B_2\xrightarrow{y_2}C_2
    \dashrightarrow$
be $\Mbe$-triangles. There exists an object $M$ and a
commutative diagram
\[
    \xymatrix{
        A\ar[d]^{x_2}\ar[r]^{x_1}
        &B_1\ar[d]^{m_2}\ar[r]^{y_1}
        &C_1\ar@{=}[d]\\
        B_2\ar[d]^{y_2}\ar[r]^{m_1}
        &M\ar[r]^{e_1}\ar[d]^{e_2}
        &C_1\\
        C_2\ar@{=}[r]
        &C_2
    }
\]
such that
$B_2\xrightarrow{m_1}M\xrightarrow{e_1}C_1
\dashrightarrow$
and
$B_1\xrightarrow{m_2}M\xrightarrow{e_2}C_2
\dashrightarrow$
are $\Mbe$-triangles.
\end{lem}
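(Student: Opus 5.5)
This is Proposition~3.15 of \cite{HY}, and the plan is to derive it from axiom (ET4) and its dual (ET4)$^{\mathrm{op}}$ of \cite[Definition~2.12]{HY}, together with the realization compatibility (ET2)–(ET3). I will prove part (1) in detail; part (2) is its formal dual, obtained by applying the argument to $\Mcc^{\mathrm{op}}$, where inflations and deflations exchange roles. So for (2) one applies (ET4) in place of (ET4)$^{\mathrm{op}}$.

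For (1), start from the two $\Mbe$-triangles with common third term $C$, with extensions $\delta_1\in\Mbe(C,A_1)$ and $\delta_2\in\Mbe(C,A_2)$. Pull $\delta_1$ back along the deflation $y_2\colon B_2\to C$ to get $y_2^*\delta_1\in\Mbe(B_2,A_1)$. Realize it as
\[
A_1\xrightarrow{m_1}M\xrightarrow{e_1}B_2\overset{y_2^*\delta_1}{\dashrightarrow}.
\]
The morphism of extensions $(1_{A_1},y_2)\colon y_2^*\delta_1\to\delta_1$ then gives, by (ET2)–(ET3), a morphism $e_2\colon M\to B_1$. This makes the lower-left square commute, i.e. $e_2m_1=x_1$ and $y_1e_2=y_2e_1$.

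The next step is to show that $M\xrightarrow{e_2}B_1$ sits in an $\Mbe$-triangle $A_2\xrightarrow{m_2}M\xrightarrow{e_2}B_1\dashrightarrow$ with $e_1m_2=x_2$. The cleanest route is the dual octahedral axiom (ET4)$^{\mathrm{op}}$, applied to the composable deflations $M\xrightarrow{e_1}B_2\xrightarrow{y_2}C$. The composite $y_2e_1$ is a deflation, since deflations compose by (ET4)$^{\mathrm{op}}$. The axiom then yields an object $E$ and triangles $A_1\to E\to A_2\dashrightarrow$ and $E\to M\xrightarrow{y_2e_1}C\dashrightarrow$, compatible with the given ones. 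One identifies the fibre of $y_2e_1=y_1e_2$ and rearranges the octahedron to produce the required triangle with cone-term $A_2$ and deflation $e_2$.

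I expect this identification to be the main obstacle: checking that the deflation produced by the octahedron really is $e_2$, and not merely some morphism $M\to B_1$. The way to handle it is to build $M$ differently from the start. Apply (ET4)$^{\mathrm{op}}$ directly to realize $M$ as the "homotopy pullback" of $y_1$ and $y_2$. That is, realize $\delta_1\oplus\delta_2$ by the direct-sum triangle $A_1\oplus A_2\to B_1\oplus B_2\to C\oplus C$ (using (ET2)), and pull it back along the diagonal $C\to C\oplus C$. Then apply (ET4)$^{\mathrm{op}}$ to the projections of $M$ onto $B_1$ and $B_2$ to obtain both triangles simultaneously. Commutativity of all squares follows from naturality of the pullback and the compatibility clauses of (ET4)$^{\mathrm{op}}$. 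Since this is exactly \cite[Proposition~3.15]{HY}, one may simply cite it, as the paper does.
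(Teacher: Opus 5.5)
The paper gives no argument for this lemma. It imports \cite[Proposition~3.15]{HY} and its dual, so your closing fallback (simply cite it) and your reduction of (2) to (1) via $\mathcal{C}^{\mathrm{op}}$ match what is actually done. The self-contained proof of (1) you sketch, however, has a genuine gap at exactly the step that carries the content.

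In your first route, $e_2$ is an arbitrary lift of the morphism of extensions $(1_{A_1},y_2)\colon y_2^*\delta_1\to\delta_1$. Such lifts are not unique, and nothing in your argument shows that the chosen one is a deflation with cocone $A_2$. Applying (ET4)$^{\mathrm{op}}$ to $M\xrightarrow{e_1}B_2\xrightarrow{y_2}C$ produces no morphism $M\to B_1$ at all. It only gives $E\cong A_1\oplus A_2$ (its triangle realizes $x_2^*y_2^*\delta_1=0$) and a triangle $E\to M\to C$, so ``rearranging the octahedron'' is not yet an argument.

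Your proposed repair has the same defect. Pulling $\delta_1\oplus\delta_2$ back along the diagonal uses no (ET4)$^{\mathrm{op}}$. Applying (ET4)$^{\mathrm{op}}$ to ``the projections of $M$ onto $B_1$ and $B_2$'' presupposes what has to be proved, namely that these maps are deflations with the right cocones. Moreover, they are not composable, which (ET4)$^{\mathrm{op}}$ requires.

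The missing idea is to apply (ET4) to inflations, which manufactures $e_2$ instead of verifying a chosen lift.
\begin{itemize}
\item Let $\epsilon\in\mathbb{E}(C,A_1\oplus A_2)$ be your pulled-back extension, so $p_{i*}\epsilon=\delta_i$, and realize it as $A_1\oplus A_2\xrightarrow{\mu}M\xrightarrow{\nu}C$.
\item Apply (ET4) to the split $\mathbb{E}$-triangle $A_2\xrightarrow{\iota_2}A_1\oplus A_2\xrightarrow{p_1}A_1\overset{0}{\dashrightarrow}$ and to this one. You obtain an $\mathbb{E}$-triangle $A_2\xrightarrow{\mu\iota_2}M\xrightarrow{h'}E\overset{\delta''}{\dashrightarrow}$ with $h'\mu=dp_1$ and $eh'=\nu$.
\item By compatibility (i), $A_1\xrightarrow{d}E\xrightarrow{e}C$ realizes $p_{1*}\epsilon=\delta_1$. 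Hence there is an isomorphism $\phi\colon E\to B_1$ with $\phi d=x_1$ and $y_1\phi=e$.
\item Put $m_2=\mu\iota_2$ and $e_2=\phi h'$. Then $e_2\mu\iota_1=x_1$ and $y_1e_2=\nu$.
\item Applying $p_{2*}$ to compatibility (iii), $\iota_{2*}\delta''=e^*\epsilon$, gives $\delta''=e^*\delta_2$. Thus $A_2\xrightarrow{m_2}M\xrightarrow{e_2}B_1$ realizes $y_1^*\delta_2$.
\item The same argument with $\iota_1$ and $p_2$ gives $m_1=\mu\iota_1$ and $e_1$ with $e_1m_2=x_2$, $y_2e_1=\nu=y_1e_2$, and extension $y_2^*\delta_1$.
\end{itemize}

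Your first route can be rescued the same way. The long exact sequence shows that the triangle $E\to M\to C$ obtained from (ET4)$^{\mathrm{op}}$ realizes $(\delta_1-u_*\delta_2,\delta_2)$ for some $u\colon A_2\to A_1$. An elementary automorphism of $A_1\oplus A_2$ then brings you to the situation above, but the (ET4) step is still needed.
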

\begin{cond}[{\citep[Condition 5.8]{HY}}] (WIC Condition)\label{WIC} Let $f\in\Mcc(A,B),g\in\Mcc(B,C)$ be any pair of morphisms.
	Consider the following conditions.

	(1) If $gf$ is an inflation, then so is $f$.

	(2) If $gf$ is a deflation, then so is $g$.
\end{cond}

A class $\xi$ of $\Mbe$-triangles is proper if it is
closed under isomorphisms and finite coproducts, contains
the split $\Mbe$-triangles, is closed under base and
cobase change, and is saturated; see
\cite[Definition~3.1]{JDP}. For objects $A,C\in\Mcc$,
set
\[
    \Mbe_\xi(C,A)
    =
    \{
        \delta\in\Mbe(C,A)
        \mid
        \delta\text{ is realized by an $\Mbe$-triangle
        in }\xi
    \},
\]
and let $\mathfrak{s}_\xi$ be the restriction of
$\mathfrak{s}$ to $\Mbe_\xi$.

\begin{lem}[{\cite[Theorem~3.2]{JDP}}]\label{Thm3.2}
The triple
$(\Mcc,\Mbe_\xi,\mathfrak{s}_\xi)$ is an
extriangulated category.
\end{lem}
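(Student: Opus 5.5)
The plan is to check the axioms of \cite[Definition~2.12]{HY} for the triple $(\Mcc,\Mbe_\xi,\mathfrak{s}_\xi)$ one at a time. Each axiom is deduced from the corresponding axiom of $(\Mcc,\Mbe,\mathfrak{s})$ together with one of the defining properties of a proper class.

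\textbf{Bifunctor and realization.} The first step is to show that $\Mbe_\xi$ is a biadditive subfunctor of $\Mbe$.

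\emph{Subgroup.} Each $\Mbe_\xi(C,A)$ contains $0$, since $\xi$ contains the split $\Mbe$-triangles. It is closed under sums: for $\delta,\delta'\in\Mbe_\xi(C,A)$ we write
\[
\delta+\delta'=(\nabla_A)_*(\Delta_C)^*(\delta\oplus\delta').
\]
Here $\delta\oplus\delta'$ lies in $\xi$ because $\xi$ is closed under finite coproducts. It also uses that $\xi$ is closed under base and cobase change, which is exactly the statement that $c^*\delta$ and $a_*\delta$ lie in $\Mbe_\xi$ whenever $\delta$ does. It is closed under negatives via $(-1_A)_*$.

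\emph{Functoriality.} The same closure under base and cobase change makes $\Mbe_\xi$ a subfunctor. Biadditivity is then inherited from $\Mbe$.

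\emph{Realization.} $\mathfrak{s}_\xi$ is simply the restriction of $\mathfrak{s}$, so it is again an additive realization. Axiom (ET2) follows because split triangles and direct sums of $\xi$-triangles lie in $\xi$.

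\textbf{Axiom (ET3).} Given $\delta,\delta'\in\Mbe_\xi$ and a commutative square on the first two terms, (ET3) for $\Mbe$ supplies a morphism of $\Mbe$-extensions. This is automatically a morphism in the restricted structure, since morphisms of extensions are defined by the equation $a_*\delta=c^*\delta'$, which does not depend on which subfunctor we view $\delta,\delta'$ in. (ET3)$^{\mathrm{op}}$ is identical.

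\textbf{Axiom (ET4) and its dual: the main obstacle.} Take $\xi$-triangles $A\xrightarrow{f}B\xrightarrow{f'}D\overset{\delta}{\dashrightarrow}$ and $B\xrightarrow{g}C\xrightarrow{g'}F\overset{\delta'}{\dashrightarrow}$. Axiom (ET4) in $\Mbe$ gives $E$ and $\Mbe$-triangles
\[
A\to C\to E\overset{\delta''}{\dashrightarrow},\qquad D\to E\to F\overset{f'_*\delta'}{\dashrightarrow},
\]
with the compatibilities $d^*\delta''=\delta$ and $f_*\delta''=e^*\delta'$. We must show that both new triangles lie in $\xi$.

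\emph{The triangle $D\to E\to F$.} Its extension is $f'_*\delta'$, which is a cobase change of $\delta'$, so it lies in $\xi$.

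\emph{The triangle $A\to C\to E$.} This one is not a base or cobase change of a given $\xi$-triangle, and here saturation is needed. Recall saturation in \cite[Definition~3.1]{JDP}: in the diagram of Lemma~\ref{BH}(1), if the triangles ending in $C$ are in $\xi$ and $A_1\to M\to B_2$ is in $\xi$, then $A_2\to M\to B_1$ is in $\xi$. The plan is to match the (ET4) octahedron to this shape, using Lemma~\ref{BH} to identify the triangles involved:
\begin{itemize}
\item The composite $A\to C$ arises by pulling back along $E\to F$.
\item The triangles ending in $F$ are $\delta'$, which is in $\xi$, and $D\to E\to F$, which was just shown to be in $\xi$.
\item The triangle $A\to B\to D$ is $\delta$, which is in $\xi$.
\end{itemize}
With this matching, saturation forces $\delta''\in\xi$. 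The delicate point is checking that the diagram produced by (ET4) really has the shape required by saturation, up to isomorphism of triangles. Closure of $\xi$ under isomorphisms then transfers membership.

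\emph{The dual (ET4)$^{\mathrm{op}}$.} This is handled symmetrically, using base change together with the dual form of saturation. That dual form follows from the stated one by the symmetric argument on the diagram of Lemma~\ref{BH}(2).
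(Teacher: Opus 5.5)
Your routine verifications (subfunctor, additivity, \textup{(ET2)}, \textup{(ET3)}) are fine, but the one substantive step, showing $\delta''\in\mathbb{E}_\xi$, does not go through.

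First, you have misstated saturation. In Lemma~\ref{BH}(1) the triangle $A_2\xrightarrow{m_2}M\xrightarrow{e_2}B_1$ realizes $y_1^*\delta_2$, which is a base change of $\delta_2$. So your version is a consequence of closure under base change. It can never produce anything beyond base and cobase changes of given $\xi$-triangles, and you yourself observe that $\delta''$ is not of that kind. The actual axiom of \cite[Definition~3.1]{JDP}, the one verified in Example~\ref{separatingexample}, says: if the right column $\delta_2$ and the middle row $y_2^*\delta_1$ lie in $\xi$, then the bottom row $\delta_1$ lies in $\xi$.

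Second, the proposed matching does not exist. Lemma~\ref{BH}(1) applied to the two triangles ending in $F$ produces $B\to M\to E$ (realizing $e^*\delta'$) and $D\to M\to C$, neither of which is $A\xrightarrow{h}C\xrightarrow{h'}E$. Moreover, $\delta''$ is in general not of the form $e^*\psi$ at all, since $d^*\delta''=\delta$ while $d^*e^*=(ed)^*=0$.

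Third, the dual form of saturation is not available ``by symmetry'', because the condition imposed on $\xi$ is one-sided. Your assignment is also reversed: it is $(\mathrm{ET4})^{\mathrm{op}}$, not \textup{(ET4)}, that follows directly from saturation.

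What is missing is any use of the compatibilities supplied by the axioms.

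For $(\mathrm{ET4})^{\mathrm{op}}$, start with $\xi$-triangles $D\xrightarrow{f'}A\xrightarrow{f}B\overset{\delta}{\dashrightarrow}$ and $F\xrightarrow{g'}B\xrightarrow{g}C\overset{\delta'}{\dashrightarrow}$. The axiom in $\mathbb{E}$ gives $E\xrightarrow{h'}A\xrightarrow{h}C\overset{\delta''}{\dashrightarrow}$ and $D\xrightarrow{d}E\xrightarrow{e}F$ realizing $g'^*\delta\in\mathbb{E}_\xi$, with $g^*\delta''=d_*\delta$. Apply Lemma~\ref{BH}(1) to $\delta_1=\delta''$ and $\delta_2=\delta'$. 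The middle row realizes $g^*\delta''=d_*\delta\in\mathbb{E}_\xi$, so saturation gives $\delta''\in\mathbb{E}_\xi$.

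For \textup{(ET4)}, in your notation, you have $f_*\delta''=e^*\delta'\in\mathbb{E}_\xi$. Apply the dual of \cite[Proposition~3.15]{HY} (Lemma~\ref{BH}(2) together with its extension data) to $\delta''$ and $\delta$. It gives $\mathbb{E}$-triangles $B\xrightarrow{n_1}N\xrightarrow{p_1}E$ realizing $f_*\delta''$, hence lying in $\xi$, and $C\xrightarrow{n_2}N\xrightarrow{p_2}D$ realizing $h_*\delta$, with $p_1^*\delta''+p_2^*\delta=0$. Thus $p_1^*\delta''=(-p_2)^*\delta\in\mathbb{E}_\xi$. Now apply saturation to the pair $\delta''$, $f_*\delta''$ of extensions ending in $E$ (so that $y_2=p_1$); this yields $\delta''\in\mathbb{E}_\xi$. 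This extra step is what replaces your appeal to symmetry.
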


Thus the standard constructions for $\Mbe$-triangles
may be carried out within $\xi$. The first and second morphisms in an $\mathbb{E}$-triangle in $\xi$ are called a $\xi$-inflation and a $\xi$-deflation, respectively.

An object $P\in\Mcc$ is $\xi$-projective if every
$\Mbe$-triangle $A\longrightarrow B\longrightarrow C\dashrightarrow$
in $\xi$ induces an exact sequence
\[
    0\longrightarrow\Mcc(P,A)
    \longrightarrow\Mcc(P,B)
    \longrightarrow\Mcc(P,C)
    \longrightarrow0.
\]
The notion of a $\xi$-injective object is defined
dually. We denote the corresponding subcategories by
$\P$ and $\mathcal I(\xi)$. The category $\Mcc$ has
enough $\xi$-projectives if every object $A$ occurs in an
$\Mbe$-triangle $K\longrightarrow P\longrightarrow A\dashrightarrow$
in $\xi$ with $P\in\P$; enough $\xi$-injectives is
defined dually.

A complex
\[
    \mathbf X:\quad
    \cdots\longrightarrow X_1\xrightarrow{d_1}X_0
    \xrightarrow{d_0}X_{-1}\longrightarrow\cdots
\]
is $\xi$-exact if, for every $n\in\mathbb Z$, there is
an $\Mbe$-triangle
\[
    K_{n+1}\xrightarrow{g_n}X_n\xrightarrow{f_n}K_n
    \dashrightarrow
\]
in $\xi$ such that $d_n=g_{n-1}f_n$. These are called
the $\xi$-resolution $\Mbe$-triangles of $\mathbf X$.

Let $\mathcal W$ be a class of objects of $\Mcc$. An
$\Mbe$-triangle
$A\to B\to C\dashrightarrow$ in $\xi$ is
$\Mcc(-,\mathcal W)$-exact if
\[
    0\longrightarrow\Mcc(C,W)
    \longrightarrow\Mcc(B,W)
    \longrightarrow\Mcc(A,W)
    \longrightarrow0
\]
is exact for every $W\in\mathcal W$. The notion of
$\Mcc(\mathcal W,-)$-exactness is defined dually. A
$\xi$-exact complex has either property if all its
$\xi$-resolution $\Mbe$-triangles have that property.

A $\xi$-projective resolution of an object $A$ is a
right-bounded $\xi$-exact complex
\[
    \cdots\longrightarrow P_2\longrightarrow P_1
    \longrightarrow P_0\longrightarrow A
    \longrightarrow0
\]
with $P_i\in\P$. The $\xi$-projective dimension
$\pd{A}$ is the least length of a finite
$\xi$-projective resolution of $A$; we set
$\pd{0}=-1$ and $\pd{A}=\infty$ if no finite resolution
exists. The notions of a $\xi$-injective coresolution
and of $\id{A}$ are defined dually. We denote by
$\widehat{\mathcal P}(\xi)$ and
$\widehat{\mathcal I}(\xi)$ the full subcategories of
objects of finite $\xi$-projective and finite
$\xi$-injective dimension, respectively.

For objects $A,B\in\Mcc$, choose a $\xi$-projective
resolution $\mathbf P\to A$ and a $\xi$-injective
coresolution $B\to\mathbf I$. For $n\geq0$, set
\[
    \xi\mathrm{xt}_{\mathcal P(\xi)}^n(A,B)
    =
    H^n\Mcc(\mathbf P,B)
\]
and
\[
    \xi\mathrm{xt}_{\mathcal I(\xi)}^n(A,B)
    =
    H^n\Mcc(A,\mathbf I).
\]
These groups are naturally isomorphic
\cite[Definition~3.2]{JZP}; their common value is denoted
by $\ext^n(A,B)$.

\begin{lem}[{\cite[Lemma~3.4]{JZP}}]\label{LZHL}
Let $A\longrightarrow B\longrightarrow C\dashrightarrow$
be an $\Mbe$-triangle in $\xi$. For every $X\in\Mcc$,
there are long exact sequences
\[
\begin{aligned}
    0&\longrightarrow\ext^0(X,A)
    \longrightarrow\ext^0(X,B)
    \longrightarrow\ext^0(X,C)
    \longrightarrow\ext^1(X,A)
    \longrightarrow\cdots,\\
    0&\longrightarrow\ext^0(C,X)
    \longrightarrow\ext^0(B,X)
    \longrightarrow\ext^0(A,X)
    \longrightarrow\ext^1(C,X)
    \longrightarrow\cdots.
\end{aligned}
\]
There is also a natural morphism
\[
    \Mcc(A,B)\longrightarrow\ext^0(A,B),
\]
which is an isomorphism if $A\in\P$ or
$B\in\mathcal I(\xi)$.
\end{lem}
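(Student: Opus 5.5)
The plan is to derive both long exact sequences from short exact sequences of Hom complexes. The key input is the balance recalled just before the statement: $\xi\mathrm{xt}_{\mathcal P(\xi)}^n$ and $\xi\mathrm{xt}_{\mathcal I(\xi)}^n$ agree, and neither depends on the chosen (co)resolution. This lets us compute $\ext^n$ with whichever resolution is convenient for each variable.

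\emph{First sequence.} Choose a $\xi$-projective resolution $\mathbf P\to X$. Since each $P_n\in\P$, the definition of $\xi$-projectivity applied to the $\Mbe$-triangle $A\to B\to C\dashrightarrow$ in $\xi$ gives exact sequences $0\to\Mcc(P_n,A)\to\Mcc(P_n,B)\to\Mcc(P_n,C)\to0$. These are natural in $P_n$, so they assemble into a degreewise short exact sequence of cochain complexes
\[
    0\longrightarrow\Mcc(\mathbf P,A)\longrightarrow\Mcc(\mathbf P,B)\longrightarrow\Mcc(\mathbf P,C)\longrightarrow0 .
\]
Its long exact cohomology sequence is the first sequence, with $H^n\Mcc(\mathbf P,-)=\xi\mathrm{xt}_{\mathcal P(\xi)}^n(X,-)=\ext^n(X,-)$. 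The cohomology vanishes in negative degrees, so the sequence starts with $0$.

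\emph{Second sequence.} Dually, choose a $\xi$-injective coresolution $X\to\mathbf I$. Each $I^n\in\mathcal I(\xi)$ makes $0\to\Mcc(C,I^n)\to\Mcc(B,I^n)\to\Mcc(A,I^n)\to0$ exact. This gives a short exact sequence of complexes $0\to\Mcc(C,\mathbf I)\to\Mcc(B,\mathbf I)\to\Mcc(A,\mathbf I)\to0$, whose long exact cohomology sequence is the second sequence. Here we use $\xi\mathrm{xt}_{\mathcal I(\xi)}^n(-,X)=\ext^n(-,X)$. The only point needing care is that the connecting maps and the identifications are natural, so the two descriptions of $\ext$ may be used interchangeably. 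That naturality is part of the balance result from \cite{JZP}, which is the main ingredient taken on faith here.

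\emph{The natural map.} Take $\mathbf P\to A$ with augmentation $\pi\colon P_0\to A$, and send $f\in\Mcc(A,B)$ to $f\pi\in\Mcc(P_0,B)$. Since $d_1=g_0f_1$ and $\pi$ is the deflation $f_0$ of the first resolution triangle, the composite $\pi d_1=f_0g_0f_1$ vanishes (a composite of an inflation and the following deflation). Thus $f\pi$ is a $0$-cocycle, which defines $\Mcc(A,B)\to H^0\Mcc(\mathbf P,B)=\ext^0(A,B)$. This map is natural, and independent of the resolution up to the canonical comparison isomorphisms.

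\emph{The isomorphism cases.} If $A\in\P$, we may take the resolution $0\to A\xrightarrow{1}A\to0$. Then $H^0\Mcc(\mathbf P,B)=\Mcc(A,B)$ and the map is the identity. If $B\in\mathcal I(\xi)$, we compute with the trivial coresolution $B\xrightarrow{1}B\to0$, getting $H^0\Mcc(A,\mathbf I)=\Mcc(A,B)$. It then remains to check that under the balance isomorphism the map above corresponds to this identity. I expect this compatibility check to be the main obstacle. I would handle it by writing the balance isomorphism via the double complex $\Mcc(\mathbf P,\mathbf I)$, where it is induced by the augmentations, so that in degree $0$ it sends $f$ to $f\pi$ on one side and to $f$ on the other.
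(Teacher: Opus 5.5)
Your proposal is correct and is the standard argument behind this result; the paper gives no proof of its own and only cites \cite[Lemma~3.4]{JZP}. The argument consists of degreewise short exact Hom sequences from the $\xi$-projectivity of the $P_n$ (respectively the $\xi$-injectivity of the $I^n$), together with the augmentation map $f\mapsto f\pi$. The step you flag as the main obstacle can be bypassed: if $B\in\mathcal I(\xi)$, then $\mathcal C(-,B)$ sends every resolution $\mathbb E$-triangle of $\mathbf P\to A$ to a short exact sequence, and splicing these shows directly that $f\mapsto f\pi$ is an isomorphism $\mathcal C(A,B)\cong H^0\mathcal C(\mathbf P,B)$, with no appeal to the balance isomorphism.
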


	\section{Quasi-$\xi$-Gorenstein objects}
	\qquad In this section, we review the definition of quasi-$\xi$-Gorenstein projective (resp. quasi-$\xi$-Gorenstein injective) objects and explore their fundamental properties.

	\begin{defn}[{\citep[Definition 3.1]{He}}]
		An object $Q$ in $\Mcc$ is called quasi-$\xi$-projective if for any $\mathbb{E}$-triangle
		\[
			A\longrightarrow Q \longrightarrow B  \dashrightarrow
		\]
		in $\xi$, the induced sequence of abelian groups
		\[
			0\longrightarrow\mathcal{C}(Q,A)\longrightarrow \mathcal{C}(Q,Q)\longrightarrow\mathcal{C}(Q,B)\longrightarrow 0
		\]
		is exact.  Dually, we have the definition of quasi-$\xi$-injective.  We denote by $\QP$ (resp. $\QI$) the  full subcategory  of quasi-$\xi$-projective (resp. quasi-$\xi$-injective) objects of $\Mcc$.
	\end{defn}

	By \citep[Remark 3.2]{He}, we know that $\P\subseteq \QP$ and $\QP$ is closed under isomorphisms. The following proposition gives a sufficient condition under which every quasi-$\xi$-projective object (resp. quasi-$\xi$-injective object) is $\xi$-projective  (resp. $\xi$-injective).
	\begin{prop}
		$(1)$ Assume that the direct sum of any quasi-$\xi$-projective object with any $\xi$-projective object is quasi-$\xi$-projective. Then every quasi-$\xi$-projective object is actually $\xi$-projective.

        $(2)$ Assume that the direct sum of any quasi-$\xi$-injective object with any $\xi$-injective object is quasi-$\xi$-injective. Then every quasi-$\xi$-injective object is actually $\xi$-injective.
	\end{prop}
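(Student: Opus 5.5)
Let $Q\in\QP$. Since $\Mcc$ has enough $\xi$-projectives, choose an $\Mbe$-triangle $K\xrightarrow{x}P\xrightarrow{y}Q\overset{\delta}{\dashrightarrow}$ in $\xi$ with $P\in\P$. The plan is to show that $y$ is a retraction, so that $Q$ is a direct summand of $P$. A direct summand of a $\xi$-projective object is $\xi$-projective, since the defining exactness of $\Mcc(-,A)\to\Mcc(-,B)\to\Mcc(-,C)$ is inherited by retracts. This would prove (1).

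To split $y$, set $Q'=Q\oplus P$. By hypothesis $Q'\in\QP$. Form the $\Mbe$-triangle
\[
K\xrightarrow{\left(\begin{smallmatrix}0\\ x\end{smallmatrix}\right)} Q\oplus P\xrightarrow{\left(\begin{smallmatrix}1&0\\0&y\end{smallmatrix}\right)\text{-type map } (q,p)\mapsto ?}\cdots
\]
More precisely, use the deflation $d=(1_Q\ \ y)\colon Q\oplus P\to Q$. This is a $\xi$-deflation: it is the direct sum of the $\xi$-triangle $K\to P\to Q$ with the split triangle $Q\xrightarrow{1}Q\to 0$, composed with the codiagonal $Q\oplus Q\to Q$. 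Alternatively, note that $d$ is the composite of the $\xi$-deflation $1\oplus y\colon Q\oplus P\to Q\oplus Q$ (a coproduct of $\xi$-triangles, hence in $\xi$) with the split deflation $(1\ 1)\colon Q\oplus Q\to Q$. Compositions of $\xi$-deflations are $\xi$-deflations for a proper class, by Lemma~\ref{Thm3.2} and the (ET4$^{\mathrm{op}}$) axiom. Hence there is an $\Mbe$-triangle $A\to Q'\xrightarrow{d}Q\dashrightarrow$ in $\xi$.

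Quasi-$\xi$-projectivity of $Q'$ then gives surjectivity of $\Mcc(Q',Q')\to\Mcc(Q',Q)$. Apply this to the morphism $(0\ \ y)\colon Q\oplus P\to Q$, or better to $(1_Q\ \ 0)$. We obtain an endomorphism $\phi$ of $Q\oplus P$ with $d\phi=(1_Q\ \ 0)$. Write the component of $\phi$ from $Q$ as $\left(\begin{smallmatrix}a\\ b\end{smallmatrix}\right)$ with $a\colon Q\to Q$ and $b\colon Q\to P$. Then $a+yb=1_Q$, which alone does not split $y$. So the choice of target map and deflation must be adjusted.

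The fix is to use instead the deflation $0\oplus y\colon Q\oplus P\to Q$, that is, $(0\ \ y)$. This is a $\xi$-deflation, being the direct sum of $K\to P\to Q$ with the split triangle $Q\xrightarrow{1}Q\to 0$. Lift the map $(1_Q\ \ 0)$ to get $\phi$ with $(0\ \ y)\phi=(1_Q\ \ 0)$. The $Q$-column gives $yb=1_Q$, so $y$ is a retraction and $Q$ is a summand of $P$.

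The main obstacle is choosing the $\xi$-triangle with middle term $Q\oplus P$ so that the lifting property yields a section of $y$. Verifying that $(0\ \ y)$ is a $\xi$-deflation uses only closure of $\xi$ under finite coproducts and the fact that split triangles lie in $\xi$. Part (2) is dual: take a $\xi$-triangle $Q\xrightarrow{x}I\to C$ with $I\in\mathcal I(\xi)$. Then use the inflation $\left(\begin{smallmatrix}0\\ x\end{smallmatrix}\right)\colon Q\to Q\oplus I$ and quasi-$\xi$-injectivity of $Q\oplus I$ to extend $(1_Q,0)^{T}$, obtaining a retraction of $x$.
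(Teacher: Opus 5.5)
Your final argument is correct and is essentially the paper's proof: you use quasi-$\xi$-projectivity of $P\oplus Q$ to lift the projection onto $Q$ (your $(1_Q\ 0)$ on $Q\oplus P$) through the $\xi$-deflation $[y\ 0]$, read off a section of $y$, and obtain (2) by duality. The only difference is how you show $[y\ 0]$ is a $\xi$-deflation: you take the coproduct of $K\to P\to Q\dashrightarrow$ with the split triangle $Q\xrightarrow{1}Q\to 0\dashrightarrow$, whereas the paper writes it as the composite $y\circ[1_P\ 0]$ and uses closure of $\xi$-deflations under composition; both are valid, and the abandoned attempt with $(1_Q\ y)$ should simply be deleted.
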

    
	\begin{proof}
		$(1)$ Let $Q$ be a quasi-$\xi$-projective object. Since $\Mcc$ has enough $\xi$-projectives, there is an $\Mbe$-triangle $K\stackrel{x}\>P\stackrel{y}\>Q\stackrel{\delta}\dashrightarrow$ in $\xi$ with $P\in P(\xi)$. Observe that the canonical projection $[1_P \hspace{5pt} 0]:P\oplus Q\to P$ is a $\xi$-deflation. By \citep[Corollary 3.5]{JDP}, the composition
		\[[y \hspace{5pt} 0]=y\circ [1_P\hspace{5pt} 0]:P\oplus Q\to Q
		\]
		is also a $\xi$-deflation, which gives an $\Mbe$-triangle in $\xi$ of the form
		\[
			K'\longrightarrow P\oplus Q\stackrel{[y \hspace{5pt} 0]}{\longrightarrow} Q\dashrightarrow
		\]
		in $\xi$. Since $P\oplus Q$ is quasi-$\xi$-projective by assumption, there exists a morphism $g:P\oplus Q\rightarrow P\oplus Q$ making the diagram 
		\[
			\xymatrix{
				& &P\oplus Q\ar[d]^{[0 \hspace{5pt} 1_Q]}\ar@{..>}[dl]_{\exists g}\\
				K'\ar[r]&P\oplus Q\ar[r]_{[y \hspace{5pt} 0]}&Q\ar@{-->}[r] &
			}
		\]
		commute in $\Mcc$. Let $f=[1_P \hspace{5pt} 0]g\begin{small}\begin{bmatrix}0\\1_Q\end{bmatrix}\end{small}$ be a morphism in $\Mcc(Q,P)$. Note that
		\[
			yf=y[1_P \hspace{5pt} 0]g\begin{small}\begin{bmatrix}0\\1_Q\end{bmatrix}\end{small}=[y \hspace{5pt} 0]g\begin{small}\begin{bmatrix}0\\1_Q\end{bmatrix}\end{small}=[0 \hspace{5pt} 1_Q]\begin{small}\begin{bmatrix}0\\1_Q\end{bmatrix}\end{small}=1_Q.
		\]
		Thus, the $\Mbe$-triangle $K\stackrel{x}\>P\stackrel{y}\>Q\stackrel{\delta}\dashrightarrow$ is split by Lemma \ref{FJ}. Consequently, we have $Q\oplus K\cong P$, which implies $Q$ is $\xi$-projective.

        $(2)$ The proof is formally dual.
	\end{proof}

	The  quasi-$\xi$-projective dimension $\Qpd{A}$ of an object $A$ is defined inductively. When $A=0$, put $\Qpd{A}=-1$.  If $A\in\QP$, then define $\Qpd{A}=0$. Next by induction, for an integer $n>0$, put $\Qpd{A}\leqslant n$ if there exists an $\mathbb{E}$-triangle $K\rightarrow Q\rightarrow A\dashrightarrow$ in $\xi$ with $Q\in\QP$ and $\Qpd{K}\leqslant n-1$.

	We say that $\Qpd{A}=n$ if $\Qpd{A}\le n$ but $\Qpd{A}\not\le n-1$. If no nonnegative integer $n$ satisfies $\Qpd{A}=n$, we set $\Qpd{A}=\infty$. Dually we can define the quasi-$\xi$-injective dimension $\Qid{A}$ of an object $A\in\Mcc$.

	We use $\widehat{\mathcal{QP}}(\xi)$ (resp. $\widehat{\mathcal{QI}}(\xi)$) to denote the full subcategory of $\Mcc$ whose objects have finite quasi-$\xi$-projective (resp. quasi-$\xi$-injective) dimension.
\begin{defn}[{\citep[Definition 4.2]{He}}]
Let $\mathbf P$ be a $\Mcc(-,\QP)$-exact complex in $\Mcc$
\[
    \mathbf P:
    \xymatrix{
        \cdots\ar[r]
        &P_1\ar[r]^{d_1}
        &P_0\ar[r]^{d_0}
        &P_{-1}\ar[r]
        &\cdots
    }
\]
such that $P_n$ is $\xi$-projective for every $n\in\mathbb Z$.
We call such a complex $\mathbf P$ a complete
quasi-$\xi$-projective resolution. For each $n\in\mathbb Z$, there is a
$\Mcc(-,\QP)$-exact $\mathbb E$-triangle
\[
    K_{n+1}\stackrel{g_n}\longrightarrow
    P_n\stackrel{f_n}\longrightarrow
    K_n\stackrel{\delta_n}\dashrightarrow
\]
in $\xi$, which is the $\xi$-resolution $\mathbb E$-triangle
of $\mathbf P$. The object $K_n$ is called
quasi-$\xi$-Gorenstein projective
(quasi-$\xi$-$\mathcal G$projective for short).

Dually, we define quasi-$\xi$-Gorenstein injective
(quasi-$\xi$-$\mathcal G$injective for short) objects.
We denote by $\mathcal{Q}\Mcg\Mcp(\xi)$
(resp. $\mathcal{Q}\GI$) the full subcategory of
quasi-$\xi$-$\Mcg$projective
(resp. quasi-$\xi$-$\Mcg$injective) objects in $\Mcc$.
\end{defn}
    
\begin{rem} \label{KZB}
By definition, $\P\subseteq\QGP\subseteq\GP$, where $\GP$ denotes the
full subcategory of $\xi$-Gorenstein projective objects (see \citep[Definitions~4.7 and~4.8]{JDP}). The class $\QGP$ is closed under finite direct sums, direct summands,
and $\xi$-extensions
(see \citep[Theorems~4.7 and~4.8]{He}).

	\end{rem}

	We present some characterizations of  quasi-$\xi$-$\mathcal{G}$projective objects as follows.

    \begin{lem}
		Let $M$ be an object in $\Mcc$. The following statements are equivalent:

		(1) $M$ is a quasi-$\xi$-$\mathcal{G}$projective object.

		(2) There exist two $\xi$-exact complexes
		\[
			\cdots\>P_1\>P_0\>M\>0 \text{\, and \,} 0\>M\>P_{-1}\>P_{-2}\>\cdots
		\]
		with $P_n\in\P$ for any $n\in\mathbb{Z}$, which are both $\Mcc(-,\QP)$-exact.

		(3) There exists a $\Mcc(-,\QP)$-exact $\Mbe$-triangle $K_{n+1}\>P_n\>K_n\dashrightarrow$ in $\xi$ with $P_n\in\P$ for any $n\in\mathbb{Z}$ and $K_0=M$.
	\end{lem}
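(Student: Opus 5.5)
The three statements are essentially reformulations of the definition, so I will prove the cycle (1)$\Rightarrow$(3)$\Rightarrow$(2)$\Rightarrow$(1). The only real work is gluing two half-complexes into a single complete quasi-$\xi$-projective resolution.

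For (1)$\Rightarrow$(3), suppose $M$ is quasi-$\xi$-$\mathcal G$projective. Then $M\cong K_m$ for some $\xi$-resolution $\mathbb E$-triangle of a complete quasi-$\xi$-projective resolution $\mathbf P$. Reindexing $\mathbf P$ by shifting degrees by $m$, we may take $m=0$. The resolution $\mathbb E$-triangles $K_{n+1}\to P_n\to K_n\dashrightarrow$ then lie in $\xi$, are $\mathcal C(-,\QP)$-exact by definition, and have $P_n\in\P$. Replacing $K_0$ by $M$ via the isomorphism is harmless, since $\xi$ is closed under isomorphisms.

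For (3)$\Rightarrow$(2), splice the given triangles. For $n\ge 0$, the triangles with indices $n\geq 0$ yield the complex $\cdots\to P_1\to P_0\to M\to 0$. Its differentials are $d_n=g_{n-1}f_n$ for $n\ge1$, together with the augmentation $f_0$. For $n\le -1$, the triangles yield $0\to M\to P_{-1}\to P_{-2}\to\cdots$ with first map $g_{-1}$. Each complex is $\xi$-exact with the given triangles (plus trivial ones at the ends, e.g. $0\to M\xrightarrow{1}M\dashrightarrow$, which is split and hence in $\xi$, and trivially $\mathcal C(-,\QP)$-exact) as its $\xi$-resolution $\mathbb E$-triangles. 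Therefore both complexes are $\mathcal C(-,\QP)$-exact.

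For (2)$\Rightarrow$(1), let $K_{n+1}\to P_n\to K_n\dashrightarrow$ ($n\ge0$, $K_0=M$) be the resolution triangles of the left complex, and $K_{n+1}\to P_n\to K_n\dashrightarrow$ ($n\le-1$, $K_0=M$) those of the right complex. Here I should check that the end triangle of the left complex is $K_1\to P_0\to M\dashrightarrow$, and that the first triangle on the right is $M\to P_{-1}\to K_{-1}$. This uses $\xi$-exactness at the augmentation terms: in the augmented complex, the triangle at $M$ has the form $K\to M\to 0$, which forces $K\cong M$. Splicing at $M$, the complex $\mathbf P:\cdots\to P_1\to P_0\xrightarrow{g_{-1}f_0}P_{-1}\to\cdots$ has all terms in $\P$. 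Its $\xi$-resolution triangles are exactly the collected ones, each in $\xi$ and $\mathcal C(-,\QP)$-exact. Hence $\mathbf P$ is a complete quasi-$\xi$-projective resolution with $K_0=M$, so $M\in\QGP$.

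The main obstacle is this bookkeeping at the junction in (2)$\Rightarrow$(1): confirming, with the paper's convention for $\xi$-exactness of augmented complexes, that the boundary triangles really are $K_1\to P_0\to M$ and $M\to P_{-1}\to K_{-1}$ up to isomorphism. I would handle this by noting that a deflation $M\to 0$ with cone-kernel $K$ is an isomorphism $K\to M$ (it is a split triangle, by Lemma \ref{FJ}), and then transporting structure along that isomorphism.
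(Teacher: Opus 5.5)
Your approach is the paper's: the paper just says the lemma follows from the definition, and your reindexing for (1)$\Rightarrow$(3) and splicing for (3)$\Rightarrow$(2)$\Rightarrow$(1) unpacks exactly that. The one step to correct is your justification at the junction in (2)$\Rightarrow$(1).

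You assume that the $\xi$-resolution $\mathbb E$-triangle at the term $M$ of $\cdots\to P_0\to M\to 0$ has the form $K\to M\to 0\dashrightarrow$, i.e.\ that its third object is the next term $0$ of the complex. That is not what $\xi$-exactness says. It only gives $K'_0\to M\to K'_{-1}\dashrightarrow$ together with $K'_{-1}\to 0\to K'_{-2}\dashrightarrow$ in $\xi$. In an extriangulated category $\xi$-inflations need not be monic, so $K'_{-1}=0$ is not forced.

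For instance, in Example~\ref{separatingexample} the triangles $E[i]\to 0\to E[i+1]\dashrightarrow$ lie in $\xi$. So $\cdots\to 0\to E\to 0\to\cdots$ is $\xi$-exact, with all triangles $\mathcal C(-,\mathcal{QP}(\xi))$-exact, using the triangle $0\to E\xrightarrow{1}E\dashrightarrow$ at $E$. This gives $K'_0=0\neq E$. Your splitting argument via Lemma~\ref{FJ} therefore has nothing to apply to. The same happens on the right, where the triangle at $M$ is $K''_1\to M\to K''_0\dashrightarrow$ and $0\to K''_1$ need only be a $\xi$-deflation.

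The fix is a convention rather than a lemma. The statement, and the paper throughout, reads an augmented $\xi$-exact complex $\cdots\to P_0\to M\to 0$ as having resolution triangles $K_{n+1}\to P_n\to K_n\dashrightarrow$ with $K_0=M$. Dually, $0\to M\to P_{-1}\to\cdots$ starts with $M\to P_{-1}\to K_{-1}\dashrightarrow$. The paper uses this convention explicitly when it reads off $K_1\to G_0\to M$ from (3.1) in Proposition~\ref{prop:qgp-resolution-replacement}. Read (2) this way and your splicing goes through unchanged.
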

	\begin{proof}
		It follows from the definition of quasi-$\xi$-$\mathcal{G}$projective object.
	\end{proof}

	\begin{lem}\label{HH}
		Let $A\longrightarrow B \longrightarrow C  \dashrightarrow$  be an $\mathbb{E}$-triangle in $\xi$ with $C\in\QGP$.  Then we have the following statements.

		(1) $A\in\QGP$ if and only if $B\in\QGP$.

        (2) The $\Mbe$-triangle $A\>B\>C\dashrightarrow$ is $\Mcc(-,\widehat{\mathcal{QP}}(\xi))$-exact.

	\end{lem}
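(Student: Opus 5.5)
For (1), the implication $A\in\QGP\Rightarrow B\in\QGP$ is immediate from Remark~\ref{KZB}, because $\QGP$ is closed under $\xi$-extensions. So the content of (1) is the converse: if $B,C\in\QGP$ then $A\in\QGP$. I would prove part (2) first and then use it for this converse.

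\emph{Step 1 (an $\Mbe_\xi$-vanishing statement).} For $C\in\QGP$ and $W\in\widehat{\mathcal{QP}}(\xi)$, I will show $\Mbe_\xi(C,W)=0$, and more generally $\ext^i(C,W)=0$ for all $i\geq1$.
\begin{itemize}
\item Base case $W\in\QP$. Take the $\Mcc(-,\QP)$-exact triangle $K_1\xrightarrow{g}P_0\xrightarrow{f}C\overset{\delta_0}{\dashrightarrow}$ in $\xi$ with $P_0\in\P$. Let $\varepsilon\in\Mbe_\xi(C,W)$. Since $P_0$ is $\xi$-projective, $f^*\varepsilon=0$. By the exact sequence $\Mcc(K_1,W)\to\Mbe_\xi(C,W)\xrightarrow{f^*}\Mbe_\xi(P_0,W)$ of the extriangulated category $(\Mcc,\Mbe_\xi,\mathfrak s_\xi)$ (Lemma~\ref{Thm3.2}), we get $\varepsilon=h_*\delta_0$ for some $h:K_1\to W$. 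By $\Mcc(-,\QP)$-exactness, $h$ factors through $g$, and hence $\varepsilon=0$ by Lemma~\ref{FJ}.
\item Higher degrees. Using the left half of the complete resolution and Lemma~\ref{LZHL}, dimension shifting gives $\ext^i(C,W)\cong\ext^1(K_{i-1},W)=0$ for all $i\geq1$, since every $K_j$ is again in $\QGP$.
\item Induction on $\Qpd{W}$. For $W'\to Q\to W\dashrightarrow$ in $\xi$ with $Q\in\QP$ and $\Qpd{W'}\le n-1$, the long exact sequence of Lemma~\ref{LZHL} in the second variable gives $\ext^i(C,W)=0$ from the vanishing for $Q$ and $W'$.
\end{itemize}
The delicate point is passing between $\Mbe_\xi(C,W)$ and $\ext^1(C,W)$. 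I would identify them via the $\xi$-projective resolution of $C$, since $\ext^1(C,W)=\Mbe_\xi(C,W)$ for the derived functor computed by $\xi$-projectives.

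\emph{Step 2 (part (2)).} Let $\delta$ be the extension realized by $A\xrightarrow{x}B\xrightarrow{y}C$ and let $a:A\to W$ with $W\in\widehat{\mathcal{QP}}(\xi)$. Then $a_*\delta\in\Mbe_\xi(C,W)=0$, so by Lemma~\ref{FJ} the map $a$ factors through $x$. This gives surjectivity of $\Mcc(B,W)\to\Mcc(A,W)$. Exactness at the other two places holds for any $\Mbe$-triangle.

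\emph{Step 3 (converse in (1)).} Assume $B,C\in\QGP$.
\begin{itemize}
\item Right half. Take $B\to P_{-1}\to B'\dashrightarrow$ from the complete resolution of $B$. Apply the dual octahedral axiom (as in Lemma~\ref{BH}) to the composite $A\to B\to P_{-1}$, working in $(\Mcc,\Mbe_\xi,\mathfrak s_\xi)$. This yields triangles $A\to P_{-1}\to D\dashrightarrow$ and $C\to D\to B'\dashrightarrow$ in $\xi$. By extension closure, $D\in\QGP$, and by Step 2 the first triangle is $\Mcc(-,\QP)$-exact. Splicing it with the right half of the complete resolution of $D$ gives a $\Mcc(-,\QP)$-exact coresolution of $A$ by $\xi$-projectives.
\item Left half. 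Take $K\to P_0\to B\dashrightarrow$ with $K\in\QGP$, and apply Lemma~\ref{BH}(1) to the composite $P_0\to B\to C$. This produces a triangle $K\to L\to A\dashrightarrow$, where $L$ fits into $K\to L\to\ast$-type triangles with $\xi$-projective and $\QGP$ terms.
\end{itemize}
Arranging the left half so that the middle term is $\xi$-projective, and then iterating, is the main obstacle. I would first try the standard Holm-style argument: show $\ext^i(A,Q)=0$ for $Q\in\QP$ and all $i\geq1$ (from the long exact sequence and Step 1), and then build the left resolution of $A$ by pulling back a $\xi$-projective cover of $A$. Each successive syzygy stays $\Mcc(-,\QP)$-exact by this Ext-vanishing. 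The two halves then glue into a complete quasi-$\xi$-projective resolution with $A$ as a syzygy.
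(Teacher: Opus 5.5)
Your Steps~1--2 rest on two claims that are false in a general extriangulated category.

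First, $\ext^1(C,W)$ is not $\mathbb{E}_\xi(C,W)$. In Example~\ref{separatingexample} the zero complex is a $\xi$-projective resolution of the acyclic object $E$, since the triangles $E[-i-1]\to0\to E[-i]\xrightarrow{1}E[-i]$ lie in $\xi$. Hence $\ext^i(E,-)=0$ for all $i\geq0$, whereas $1_E\in\mathbb{E}_\xi(E,E[-1])=\mathcal{C}(E,E)$ is nonzero. In general, take $K_1\xrightarrow{g}P_0\xrightarrow{f}C$ with $P_0\in\P$. Then $\mathbb{E}_\xi(C,W)\cong\operatorname{Coker}\mathcal{C}(g,W)$, and the obvious map to $\ext^1(C,W)$ sends the class of $h$ to that of $hf_1$, where $f_1:P_1\to K_1$ is the next deflation. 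An element of the kernel only satisfies $(h-ug)f_1=0$ for some $u:P_0\to W$. You cannot cancel $f_1$, because deflations need not be epimorphisms. So your induction on $\Qpd{W}$ via Lemma~\ref{LZHL} proves $\ext^{\geq1}(C,W)=0$ but not $\mathbb{E}_\xi(C,W)=0$. Since $\mathbb{E}_\xi(C,-)$ is only middle exact, you cannot run that induction on $\mathbb{E}_\xi$ through the long sequence either. The vanishing you want is true, but it must be proved on morphisms, simultaneously for all objects of $\QGP$. Given $h:K_1\to W$ and $W'\to Q\xrightarrow{y}W\overset{\rho}{\dashrightarrow}$ in $\xi$ with $Q\in\QP$, one has $h^*\rho\in\mathbb{E}_\xi(K_1,W')=0$ by induction, because $K_1\in\QGP$. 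So $h=yh'$, and $h'$ extends along $g$ because $Q\in\QP$.

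Second, you assert that exactness at $\mathcal{C}(C,W)$ holds for every $\mathbb{E}$-triangle. It does not. For the $\xi$-triangle $E[-1]\to0\to E\dashrightarrow$ with $E\in\QGP$, injectivity of $\mathcal{C}(E,W)\to\mathcal{C}(0,W)$ is exactly the nontrivial assertion $\mathcal{C}(E,W)=0$. Thus Step~2 yields only surjectivity of $\mathcal{C}(B,W)\to\mathcal{C}(A,W)$. That deduction from $\mathbb{E}_\xi(C,W)=0$ via Lemma~\ref{FJ} is correct, and tidier than the paper's transfer.

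Injectivity is what the paper's $3\times3$ chase supplies. For $K\to P\xrightarrow{f}C$, you combine:
\begin{itemize}
\item injectivity with coefficients in $\QP$, which is built into the definition of a complete resolution;
\item injectivity with coefficients in $W'$, by induction (applied also to $K$);
\item surjectivity of $\mathcal{C}(C,Q)\to\mathcal{C}(C,W)$, which comes from $\mathbb{E}_\xi(C,W')=0$.
\end{itemize}
These give injectivity for $W$. It then passes to $A\to B\xrightarrow{y}C$ through the triangles $A\to X\xrightarrow{e_1}P$ (split) and $K\to X\xrightarrow{e_2}B$ of Lemma~\ref{BH}(1): if $uy=0$ then $ufe_1=uye_2=0$, hence $uf=0$ and $u=0$.

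The same defect sinks the left half of your Step~3. Vanishing of $\ext^{\geq1}(A,\QP)$ does not make the syzygy triangles $\mathcal{C}(-,\QP)$-exact, so the converse in (1) is not established. The paper simply cites \cite[Theorem~4.7]{He} for (1). A self-contained route is as follows.
\begin{itemize}
\item Take $K\to P_0\to B$ from a complete resolution of $B$, and compose the deflations $P_0\to B\to C$.
\item Once (2) is known, Lemma~\ref{Dec30}(2) gives a $\mathcal{C}(-,\QP)$-exact $\xi$-triangle $K\to L\to A$.
\item $L\in\QGP$ by a Schanuel comparison of $L\to P_0\to C$ with a resolution triangle of $C$, plus closure under direct summands.
\item Splice with the left half of a complete resolution of $K$ and apply Corollary~\ref{Cor3.8}, whose proof uses only (2) and the extension-closed direction of (1).
\item Combine the result with your right half.
\end{itemize}
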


	\begin{proof}
        (1)  follows from  \citep[Theorem 4.7]{He}.
    
		(2) Since $C\in\QGP$,  there is  an $\Mbe$-triangle $K\stackrel{g}\>P\stackrel{f}\>C\stackrel{\delta}\dashrightarrow$ in $\xi$ with $P\in\P$ and $K\in\QGP$. Now we claim that  for any $M\in\Mcc$ with $\Qpd{M}=n$, the sequence of abelian groups
		\[
			0\>\Mcc(C,M)\stackrel{\Mcc(f,M)}\>\Mcc(P,M)\stackrel{\Mcc(g,M)}\>\Mcc(K,M)\>0\tag{$*$}
		\]
		is exact. If $n=0$, then the sequence $(*)$ is exact
		by  \citep[Lemma 4.4(1)]{He}. Assume that the sequence $(*)$ is exact for any $M\in\Mcc$ with $\Qpd{M}=n-1$. Next we consider the case of $\Qpd{M}=n$. Suppose that $L\stackrel{x}\>Q\stackrel{y}\>M\stackrel{\rho}\dashrightarrow$ is an $\Mbe$-triangle in $\xi$ with $Q\in\QP$ and $\Qpd{L}=n-1$. By induction, the sequence of abelian groups
		\[
			0\>\Mcc(C,L)\stackrel{\Mcc(f,L)}\>\Mcc(P,L)\stackrel{\Mcc(g,L)}\>\Mcc(K,L)\>0\tag{$*$}
		\]
		is exact. Note that the functor $\Mcc(-,-)$ is biadditive, we have the following commutative diagram.
		\[
			\xymatrix{
				&0\ar[d]&0\ar[d]&0\ar@{-->}[d]\\
				0\ar@{-->}[r]&\mathcal{C}(C,L)\ar[d]_{\Mcc(f,L)}\ar[r]^{\Mcc(C,x)}&\mathcal{C}(C,Q)\ar[d]_{\Mcc(f,Q)}\ar[r]^{\Mcc(C,y)} & \Mcc(C,M)\ar[d]^{\Mcc(f,M)}\ar@{-->}[r]&0\\
				0\ar[r]&\mathcal{C}(P,L) \ar[d]_{\Mcc(g,L)}\ar[r]^{\Mcc(P,x)} & \mathcal{C}(P,Q) \ar[r]^{\Mcc(P,y)}\ar[d]_{\Mcc(g,Q)}& \mathcal{C}(P,M) \ar[d]^{\Mcc(g,M)} \ar[r]&0 \\
				0\ar[r]&\mathcal{C}(K,L)\ar[d]\ar[r]^{\Mcc(K,x)} & \mathcal{C}(K,Q)\ar[d] \ar[r]^{\Mcc(K,y)} & \mathcal{C}(K,M)  \ar@{-->}[d] \ar@{-->}[r]&0\\
				&0&0&0
			}
		\]
		It is easy to see that the morphism $\Mcc(C,x)$ is monic. For any morphism $h:C\to M$, there is a morphism $h_1: P\to Q$ such that $hf=yh_1$. It follows from $\rm(ET3)^{op}$ that there exists a morphism $h_2$, which gives the following commutative diagram.
		\[
			\xymatrix{
				K \ar@{-->}[d]^{h_2} \ar[r]^{g} & P \ar[d]^{h_1} \ar[r]^{f} & C\ar[d]^{h}\ar@{-->}[r]^{\delta}& \\
				L\ar[r]^{x} &Q\ar[r]^{y} & M\ar@{-->}[r]^{\rho}&}
		\]
		Since $\Mcc(g,L):\Mcc(P,L)\to\Mcc(K,L)$ is epic, $h_2$ factors through $g$. Hence $h$ factors through $y$ by Lemma \ref{FJ}. So $\Mcc(C,y)$ is epic, which implies the exactness of the sequence of abelian groups
		\[
			0\>\Mcc(C,L)\stackrel{\Mcc(C,x)}\>\Mcc(C,Q)\stackrel{\Mcc(C,y)}\>\Mcc(C,M)\>0.
		\]
		Similarly, we can obtain that the sequence of abelian groups
		\[
			0\>\Mcc(K,L)\stackrel{\Mcc(K,x)}\>\Mcc(K,Q)\stackrel{\Mcc(K,y)}\>\Mcc(K,M)\>0
		\]
		is exact since $K\in\QGP$. Thus, the sequence $(*)$ is exact by $3\times 3$ Lemma. By
		Lemma \ref{BH}(1), we have the following commutative diagram.
		\[
			\xymatrix{
				&K\ar[d]\ar@{=}[r]&K\ar[d]\\
				A\ar@{=}[d]\ar[r]&X\ar[r]\ar[d]&P\ar[d]\ar@{-->}[r]&\\
				A\ar[r]&B\ar@{-->}[d]\ar[r]&C\ar@{-->}[d]&\\
				&&&&}
		\]
		Note that $A\>X\>P\dashrightarrow$ is a split $\Mbe$-triangle. So the  sequence of abelian groups
		\[
			0\>\Mcc(C,M)\>\Mcc(B,M)\>\Mcc(A,M)\>0
		\]
		is exact by  \citep[Lemma 4.10(1)]{JDP}.
	\end{proof}

\begin{prop}\label{qgpqpcap}
    We have $\QGP\cap\QP=\P$.
    Consequently, $\P\subsetneq\QGP$ if and only if $\QGP\nsubseteq\QP$.
\end{prop}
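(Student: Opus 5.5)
We prove the two inclusions of $\QGP\cap\QP=\P$ separately; the consequence then follows formally.

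The inclusion $\P\subseteq\QGP\cap\QP$ is already recorded: $\P\subseteq\QP$ by \citep[Remark 3.2]{He}, and $\P\subseteq\QGP$ by Remark~\ref{KZB}.

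For the reverse inclusion, let $M\in\QGP\cap\QP$. Since $M\in\QGP$, the characterization lemma preceding Lemma~\ref{HH} gives an $\Mbe$-triangle
\[
K\stackrel{g}\longrightarrow P\stackrel{f}\longrightarrow M\stackrel{\delta}\dashrightarrow
\]
in $\xi$ with $P\in\P$, $K\in\QGP$, which is $\Mcc(-,\QP)$-exact. Take the coefficient object to be $M$ itself, which lies in $\QP$. Then
\[
0\longrightarrow\Mcc(M,M)\longrightarrow\Mcc(P,M)\longrightarrow\Mcc(K,M)\longrightarrow0
\]
is exact. We would rather use the quasi-projectivity of $M$ directly, since the triangle has $M$ as its third term. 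Apply the definition of $M\in\QP$ to an $\Mbe$-triangle in $\xi$ whose middle term is $M$. A cleaner route is to use $K\in\QGP$ instead: the dual surjectivity we need is that $g$ admits a retraction. Surjectivity of $\Mcc(g,M)$ alone does not give this, so the main step is to exhibit a splitting.

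The plan is the following. Since $K\in\QGP$, there is an $\Mcc(-,\QP)$-exact $\Mbe$-triangle $K\stackrel{g'}\to P'\to K'\dashrightarrow$ in $\xi$ with $P'\in\P$ and $K'\in\QGP$. Instead of attacking $K$, it is simpler to show directly that $\delta=0$. Because $M\in\QGP$, the triangle $K\to P\to M\dashrightarrow$ is $\Mcc(-,\QP)$-exact. This holds in particular for the coefficient object $K$ if $K$ were in $\QP$, but it is not in general, so that route fails.

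The correct route uses a triangle ending in $M$ with middle term $M$-related. Take a triangle $K\to P\stackrel{f}\to M\dashrightarrow$ in $\xi$. By Lemma~\ref{BH}(1) and the proof pattern of the Proposition above on direct sums, form $P\oplus M\xrightarrow{[f\ 1_M]}M$. This is a $\xi$-deflation, being a retraction composed appropriately, or by \citep[Corollary 3.5]{JDP}. It fits into a $\xi$-triangle $K''\to P\oplus M\to M\dashrightarrow$. The middle term $P\oplus M$ lies in $\QGP$ by closure under sums, and the end term $M$ lies in $\QGP$. Hence by Lemma~\ref{HH}(1) we get $K''\in\QGP$, and by Lemma~\ref{HH}(2) the triangle is $\Mcc(-,\QP)$-exact. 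So far this gives no more than the original triangle.

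The key tool is rather the identity of $M$ combined with the quasi-projectivity of $M$ applied to the deflation $f$ after lifting. By Lemma~\ref{HH}(2) applied to $K\to P\to M$ with the coefficient $M\in\QP$, $\Mcc(g,M)$ is surjective. Then apply quasi-projectivity of $M$ to a triangle $L\to M\to M'\dashrightarrow$ to lift. This is the step I expect to be the main obstacle, and I would first try to make $f$ split by constructing $s:M\to P$ with $fs=1_M$.

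The intended mechanism is the following. Dually to the $3\times3$ argument in Lemma~\ref{HH}, the $\Mcc(-,\QP)$-exactness of a coresolution triangle $M\stackrel{u}\to P_{-1}\to N\dashrightarrow$, with $N\in\QGP$ and $M\in\QP$, gives a map $r:P_{-1}\to M$ with $ru=1_M$. This comes from surjectivity of $\Mcc(u,M)$ applied to $1_M$. Then $u$ is a section, so by Lemma~\ref{FJ} the triangle splits. Hence $M$ is a summand of $P_{-1}\in\P$, and therefore $M\in\P$.

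Finally, for the consequence: if $\QGP\nsubseteq\QP$, some object of $\QGP$ is not in $\QP\supseteq\P$, so $\P\subsetneq\QGP$. Conversely, if $\QGP\subseteq\QP$, then $\QGP=\QGP\cap\QP=\P$.
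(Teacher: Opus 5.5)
Your final argument is exactly the paper's proof, and your derivation of the consequence matches it as well. You take the coresolution triangle $M\xrightarrow{u}P_{-1}\to N\dashrightarrow$ from a complete quasi-$\xi$-projective resolution; its $\Mcc(-,\QP)$-exactness together with $M\in\QP$ lets $1_M$ factor through $u$, so the triangle splits and $M$ is a summand of $P_{-1}\in\P$. In the write-up, delete the earlier detours through $K\to P\to M$ and the deflation $[f\;1_M]$, and drop the reference to a $3\times3$ argument, since surjectivity of $\Mcc(u,M)$ is immediate from the definition.
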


\begin{proof}
    Obviously, $\P\subseteq\QGP\cap\QP$.
    Conversely, let $M\in\QGP\cap\QP$. Since $M\in\QGP$,
    there is an $\mathbb E$-triangle
    \[
        M\xrightarrow{u}P\xrightarrow{v}K\dashrightarrow
    \]
    in $\xi$ with $P\in\P$ and $K\in\QGP$, which is
    $\Mcc(-,\QP)$-exact by definition. Since $M\in\QP$,
    we have the following exact sequence
    \[
        0\longrightarrow\Mcc(K,M)
        \longrightarrow\Mcc(P,M)
        \xrightarrow{\Mcc(u,M)}\Mcc(M,M)
        \longrightarrow0.
    \]
    Hence the $\mathbb E$-triangle
    $M\xrightarrow{u}P\xrightarrow{v}K\dashrightarrow$
    is split. So we obtain $M\oplus K\cong P$.
    Since $\P$ is closed under direct summands, it follows
    that $M\in\P$. Thus, $\QGP\cap\QP=\P$.

    The last assertion follows from the above equality
    and $\P\subseteq\QGP$.
\end{proof}

	\begin{prop}\label{TTTT}
		$(1)$ Assume that $G$ is an object  in $\QGP$. If $M\in\widehat{\mc{QP}}(\xi)$, then $\ext^0(G,M) \simeq \Mcc(G,M)$ and $\ext^n(G,M)=0$ for any  $n\geq 1$.

        $(2)$ Assume that $H$ is an object  in $\QGI$. If $M\in\widehat{\mathcal{QI}}(\xi)$, then $\ext^0(M,H) \simeq \Mcc(M,H)$ and $\ext^n(M,H)=0$ for any  $n\geq 1$.
	\end{prop}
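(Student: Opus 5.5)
We compute $\ext^n(G,M)$ with the $\xi$-projective resolution of $G$ that comes from the definition of $\QGP$, and then use Lemma~\ref{HH}(2) to get exactness after applying $\Mcc(-,M)$. Only part (1) needs an argument; part (2) follows by the dual reasoning, using $\xi$-injective coresolutions, $\mathcal I(\xi)$-computed $\ext$, and the dual of Lemma~\ref{HH}.

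\emph{Building the resolution.} Since $G\in\QGP$, the characterization lemma gives a $\xi$-exact complex
\[
\cdots\longrightarrow P_1\longrightarrow P_0\longrightarrow G\longrightarrow 0
\]
with $P_i\in\P$. Its $\xi$-resolution $\Mbe$-triangles are $K_{i+1}\xrightarrow{g_i}P_i\xrightarrow{f_i}K_i\dashrightarrow$ in $\xi$, with $K_0=G$. Each $K_i$ is again quasi-$\xi$-$\mathcal G$projective, because it is a syzygy object of the same complete quasi-$\xi$-projective resolution. This complex is a $\xi$-projective resolution of $G$, so by definition
\[
\ext^n(G,M)=H^n\Mcc(\mathbf P,M).
\]

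\emph{Exactness of each piece.} Fix $M\in\widehat{\mathcal{QP}}(\xi)$. Each triangle $K_{i+1}\to P_i\to K_i\dashrightarrow$ lies in $\xi$ and has third term $K_i\in\QGP$. By Lemma~\ref{HH}(2) it is $\Mcc(-,\widehat{\mathcal{QP}}(\xi))$-exact, so
\[
0\to\Mcc(K_i,M)\xrightarrow{\Mcc(f_i,M)}\Mcc(P_i,M)\xrightarrow{\Mcc(g_i,M)}\Mcc(K_{i+1},M)\to0
\]
is exact for all $i\ge0$.

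\emph{Splicing.} Since $d_i=g_{i-1}f_i$, we have $\Mcc(d_i,M)=\Mcc(f_i,M)\Mcc(g_{i-1},M)$. Splicing the short exact sequences above shows that
\[
0\to\Mcc(G,M)\to\Mcc(P_0,M)\to\Mcc(P_1,M)\to\cdots
\]
is exact. More concretely:
\begin{itemize}
\item The kernel of $\Mcc(P_n,M)\to\Mcc(P_{n+1},M)$ equals the image of $\Mcc(K_n,M)$ under the injection $\Mcc(f_n,M)$.
\item For $n\ge1$, that image equals the image of $\Mcc(d_n,M)$, because $\Mcc(g_{n-1},M)$ is surjective.
\end{itemize}
Hence $H^n\Mcc(\mathbf P,M)=0$ for $n\ge1$. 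In degree $0$, the kernel of $\Mcc(P_0,M)\to\Mcc(P_1,M)$ is identified with $\Mcc(K_0,M)=\Mcc(G,M)$, which gives $\ext^0(G,M)\simeq\Mcc(G,M)$. This agrees with the natural map of Lemma~\ref{LZHL}.

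\emph{Main obstacle.} The substantive input is Lemma~\ref{HH}(2): it upgrades the defining $\Mcc(-,\QP)$-exactness to $\Mcc(-,\widehat{\mathcal{QP}}(\xi))$-exactness. Once that is available, the rest is routine bookkeeping. The only point needing care is that all the syzygies $K_i$, not just $G$, lie in $\QGP$, and the definition of complete quasi-$\xi$-projective resolutions guarantees this.
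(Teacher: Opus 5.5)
Your argument is correct. It differs from the paper's mainly in how the Hom-exactness from Lemma~\ref{HH}(2) is turned into a statement about $\ext$.

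The paper uses a single triangle $G'\to P\to G\dashrightarrow$. It maps the exact sequence $0\to\Mcc(G,M)\to\Mcc(P,M)\to\Mcc(G',M)\to0$ into the long exact $\ext$-sequence of Lemma~\ref{LZHL}, using the natural maps $\Mcc(-,M)\to\ext^0(-,M)$. A diagram chase shows these maps are isomorphisms for $G$ and for $G'$, which forces $\ext^1(G,M)=0$. The degrees $n\geq2$ are then obtained by dimension shifting.

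You instead take the whole left half of a complete quasi-$\xi$-projective resolution as the $\xi$-projective resolution computing $\ext$, and splice the short exact Hom-sequences. This handles all degrees at once. It also identifies $\ext^0(G,M)\simeq\Mcc(G,M)$ directly as the natural map. You therefore need neither the commutativity of the comparison diagram nor the connecting morphisms of Lemma~\ref{LZHL}.

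What you rely on is that $\ext$ is independent of the chosen $\xi$-projective resolution, and that every syzygy $K_i$ lies in $\QGP$. Both are available here. The paper's dimension shift uses the second point implicitly as well, since it reapplies the $\ext^1$ argument to $G'$ and its syzygies.
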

	\begin{proof}
		(1) Since $G\in\QGP$, there exists an $\Mbe$-triangle $G'\longrightarrow P \longrightarrow G  \dashrightarrow$ in $\xi$ with $G'\in\QGP$ and $P\in\P$. If $M\in\widehat{\mc{QP}}(\xi)$, then we have the following commutative diagram
		\[
			\xymatrix{
				0\ar[r]&\Mcc(G,M)\ar[r]\ar[d]_{\varphi_1}&\Mcc(P,M)\ar[r]\ar[d]^{\varphi_2}_{\simeq}&\Mcc(G',M)\ar[r]\ar[d]^{\varphi_3}& 0&\\
				0\ar[r]&\ext^0(G,M)\ar[r]&\ext^0(P,M)\ar[r]&\ext^0(G',M)\ar[r]&\ext^1(G,M)\ar[r]&0.
			}
		\]
		where the top exact sequence follows from Lemma \ref{HH}(2). Note that  $\varphi_1$ and  $\varphi_3$ are monic, hence $\varphi_1$ is  epic by the snake Lemma, so $\varphi_1$ is an isomorphism. Similarly, one can get that $\varphi_3$ is an isomorphism, so $\ext^1(G,M)=0$. It is easy to show that $\ext^n(G, M)=0$ for any $n\geq 1$ by Lemma \ref{LZHL}.
        
        $(2)$ is similar to $(1)$.
	\end{proof}

\begin{exam}\label{separatingexample}
Let $k$ be a field, $A=k[\varepsilon]/(\varepsilon^2)$
and $S=A/(\varepsilon)$. Set
$\mathcal C=K^b(\operatorname{mod}A)$, where
$\operatorname{mod}A$ denotes the category of finitely
generated $A$-modules. Let $\xi$ consist of the
distinguished triangles
$X\to Y\to Z\xrightarrow{\delta}X[1]$ with
$H^n(\delta)=0$ for every $n\in\mathbb Z$. Then
\[
    \mathcal P(\xi)\subsetneq\mathcal{QGP}(\xi)
    \subsetneq\mathcal{GP}(\xi)=\mathcal C.
\]
Moreover, $\mathcal{QP}(\xi)$ and $\mathcal{QGP}(\xi)$
are incomparable, and the induced relative
extriangulated structure is neither exact nor triangulated.
\end{exam}

\begin{proof}
The class $\xi$ contains the split triangles and is
closed under isomorphisms and finite direct sums.
For a triangle
$X\longrightarrow Y\longrightarrow Z
\xrightarrow{\delta}X[1]$ in $\xi$ and morphisms
$c:Z'\to Z$ and $a:X\to X'$, the base and cobase
changes have connecting morphisms $\delta c$ and
$a[1]\delta$, respectively. Since $H^n(\delta)=0$
for any $n\in\mathbb Z$, both connecting morphisms
induce zero in cohomology. Hence $\xi$ is closed
under base and cobase change.

In the diagram of Lemma~\ref{BH}\textup{(1)}, let
$\delta_1:C\to A_1[1]$ be the connecting morphism
of the bottom row. Suppose that the middle row
and the right column belong to $\xi$.
The middle row has connecting morphism $\delta_1y_2$.
For any $n\in\mathbb Z$, the long exact cohomology
sequence of the right column implies that
$H^n(y_2)$ is epic. Since the middle row belongs
to $\xi$, we have
$H^n(\delta_1)H^n(y_2)=H^n(\delta_1y_2)=0$.
Hence $H^n(\delta_1)=0$, so the bottom row belongs
to $\xi$. Thus $\xi$ is saturated and therefore
proper. By Lemma~\ref{Thm3.2},
$(\mathcal C,\mathbb E_\xi,\mathfrak s_\xi)$ is
extriangulated. Its inflations and deflations are
precisely the cohomologically injective and
surjective maps, respectively, so
Condition~\ref{WIC} holds.

Since $A$ is self-injective, we have natural
isomorphisms
\[
\mathcal C(A[-n],X)\cong H^n(X)\; \text{and}\;\mathcal C(X,A[-n])
\cong\operatorname{Hom}_A(H^n(X),A)\]
Thus each $A[n]$ is both $\xi$-projective and
$\xi$-injective.
For any $X\in\mathcal C$, finite free surjections
$A^{m_n}\twoheadrightarrow H^n(X)$ and embeddings
$H^n(X)\hookrightarrow A^{r_n}$ induce a
$\xi$-deflation $\bigoplus_n A[-n]^{m_n}\to X$
and a $\xi$-inflation
$X\to\bigoplus_n A[-n]^{r_n}$, respectively.
Both sums are finite. It follows that $\mathcal C$
has enough $\xi$-projectives and $\xi$-injectives,
and that
$\mathcal P(\xi)=\mathcal I(\xi)
=\operatorname{add}\{A[n]\mid n\in\mathbb Z\}$.
Since $\mathcal P(\xi)=\mathcal I(\xi)$, splicing
a $\xi$-projective resolution and a $\xi$-injective
coresolution of any object gives a complete
$\xi$-projective resolution. Hence
$\mathcal{GP}(\xi)=\mathcal C$.

Consider the complexes
\[
    E=(S\xrightarrow{\iota}A\xrightarrow{\pi}S),
    \qquad B=(A\xrightarrow{\varepsilon}A),
\]
in degrees $0,1,2$ and $0,1$, respectively, where
$\iota(1)=\varepsilon$ and $\pi$ is the canonical map.
Let $Q\in\mathcal{QP}(\xi)$ and let $N\in\mathcal C$
be acyclic. The triangle
$N[-1]\to Q\xrightarrow{(1,0)}Q\oplus N\to N$
lies in $\xi$, so quasi-projectivity implies
$\mathcal C(Q,N)=0$.

Let $Q$ be represented by a complex concentrated
in degrees $[a,b]$. There exists a quasi-isomorphism
$P\to Q$, where $P$ is a bounded-above complex of
finitely generated projective $A$-modules.
The smart truncation $Z$ of $P$ in degree $a-1$
is bounded, has projective terms in degrees at
least $a$, and admits an induced quasi-isomorphism
$s:Z\to Q$.
Since $\operatorname{Cone}s$ is acyclic, we have
$\mathcal C(Q,\operatorname{Cone}s)=0$.
Hence $s$ has a section in $\mathcal C$.
A chain-map representative of this section factors
through the brutal truncation $Z^{\geq a}$.
Thus $Q$ is a direct summand in $\mathcal C$ of
a bounded complex of projective-injective
$A$-modules. Maps from $N$ into such a complex
are null-homotopic, so $\mathcal C(N,Q)=0$.

The complex $E$ is acyclic and nonzero in
$\mathcal C$, since the short exact sequence
defining it does not split.
For any $i\in\mathbb Z$, the triangle
$E[-i-1]\to0\to E[-i]\xrightarrow{1}E[-i]$
belongs to $\xi$ and is
$\mathcal C(-,\mathcal{QP}(\xi))$-exact.
These are the resolution triangles of a complete
quasi-$\xi$-projective resolution with zero terms.
Hence $E\in\mathcal{QGP}(\xi)$.
If $E\in\mathcal{QP}(\xi)$, then
$\mathcal C(E,E)=0$, contrary to $E\neq0$.
Thus $E\notin\mathcal{QP}(\xi)$, and hence
$E\notin\mathcal P(\xi)$.

For any $Y\in\mathcal C$ with
$\varepsilon H^n(Y)=0$ for all $n\in\mathbb Z$,
the distinguished triangle
$B\to A[0]\xrightarrow{\varepsilon}A[0]\to B[1]$
induces a natural short exact sequence
\[
    0\longrightarrow H^0(Y)
    \longrightarrow\mathcal C(B,Y)
    \longrightarrow H^1(Y)\longrightarrow0.
\]
Let $X\to B\xrightarrow{q}Y\to X[1]$ be a triangle
in $\xi$. Note that $H^0(B)\cong H^1(B)\cong S$
and all other cohomology groups of $B$ vanish.
Since $H^n(q)$ is epic for any $n\in\mathbb Z$,
we have $\varepsilon H^n(Y)=0$ for all
$n\in\mathbb Z$.
Thus $B$, $Y$ and their shifts satisfy the
preceding condition.
Naturality and the surjectivity of $H^n(q)$ imply
that $\mathcal C(B,q)$ and
$\mathcal C(B,q[-1])$ are surjective.
The long exact Hom sequence therefore restricts
to a short exact sequence
$0\to\mathcal C(B,X)\to\mathcal C(B,B)
\to\mathcal C(B,Y)\to0$.
Hence $B\in\mathcal{QP}(\xi)$.
Since $H^0(B)\cong S$ is not a projective
$A$-module, $B\notin\mathcal P(\xi)$.
By Proposition~\ref{qgpqpcap},
$B\notin\mathcal{QGP}(\xi)$.
This proves the strict inclusions and incomparability.

Since $E$ is acyclic, the triangle
$E\to0\to E[1]\xrightarrow{1}E[1]$ belongs to $\xi$.
Thus $E\to0$ is a $\xi$-inflation, but it is not
monic since $E\neq0$.
As inflations in an exact category are monic,
$(\mathcal C,\mathbb E_\xi,\mathfrak s_\xi)$ is
not exact.
On the other hand, $H^0(A[0])=A\neq0$, so the
morphism $A[0]\to0$ is not cohomologically injective
and hence is not a $\xi$-inflation.
Since any morphism in a triangulated category is
an inflation,
$(\mathcal C,\mathbb E_\xi,\mathfrak s_\xi)$ is
not triangulated.
\end{proof}

	\begin{prop}\label{Prop3.7}
		Let $A\longrightarrow B\longrightarrow C \dashrightarrow$ be an $\mathbb{E}$-triangle in $\xi$ with both $A,B\in\QGP$. Then the following statements are equivalent.

		(1) $C\in\QGP$.

		(2) $\ext^1(C,Q)=0$, $\forall Q\in\QP$.

		(3) $\ext^1(C,P)=0$, $\forall P\in\P$.
	\end{prop}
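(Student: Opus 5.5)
The cycle of implications I would prove is $(1)\Rightarrow(2)\Rightarrow(3)\Rightarrow(1)$.

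\emph{$(1)\Rightarrow(2)$ and $(2)\Rightarrow(3)$.} The first is immediate from Proposition~\ref{TTTT}(1), since $\QP\subseteq\widehat{\mathcal{QP}}(\xi)$. The second holds because $\P\subseteq\QP$.

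\emph{$(3)\Rightarrow(1)$.} This is the substantive step. The plan is to build the two halves of a complete quasi-$\xi$-projective resolution of $C$, as in the characterization lemma preceding Lemma~\ref{HH}.

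\emph{Left half.} Since $B\in\QGP$, choose an $\Mbe$-triangle $B'\to P\to B\dashrightarrow$ in $\xi$ with $P\in\P$ and $B'\in\QGP$, which is $\Mcc(-,\QP)$-exact. Compose $P\to B\to C$; this is a $\xi$-deflation. Apply Lemma~\ref{BH}(1) together with $(\mathrm{ET4})^{\mathrm{op}}$ inside $(\Mcc,\Mbe_\xi,\mathfrak s_\xi)$. This yields an $\Mbe$-triangle $K\to P\to C\dashrightarrow$ in $\xi$ and an $\Mbe$-triangle $B'\to K\to A\dashrightarrow$ in $\xi$. Since $\QGP$ is closed under $\xi$-extensions (Remark~\ref{KZB}), $K\in\QGP$.

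\emph{Exactness of the left triangle.} It remains to check that $K\to P\to C\dashrightarrow$ is $\Mcc(-,\QP)$-exact. Fix $Q\in\QP$. Since $P\in\P$, Lemma~\ref{LZHL} identifies $\Mcc(P,Q)$ with $\ext^0(P,Q)$ and gives $\ext^1(P,Q)=0$; by Proposition~\ref{TTTT}(1), $\Mcc(K,Q)\cong\ext^0(K,Q)$. The long exact sequence of Lemma~\ref{LZHL} then shows that surjectivity of $\Mcc(P,Q)\to\Mcc(K,Q)$ is equivalent to $\ext^1(C,Q)=0$. However, we only know $\ext^1(C,P')=0$ for $P'\in\P$. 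So first I would upgrade (3) to (2).

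\emph{Upgrading (3) to (2): the main obstacle.} Apply $\ext(-,Q)$ to $A\to B\to C\dashrightarrow$. This gives the exact sequence $\ext^0(B,Q)\to\ext^0(A,Q)\to\ext^1(C,Q)\to\ext^1(B,Q)=0$, where the last term vanishes by Proposition~\ref{TTTT}(1). Hence $\ext^1(C,Q)=0$ is equivalent to every map $A\to Q$ extending along $A\to B$. To prove this extension property, I would embed $A$ via an $\Mcc(-,\QP)$-exact triangle $A\to P_{-1}\to A'\dashrightarrow$ with $P_{-1}\in\P$ and $A'\in\QGP$. Any $h:A\to Q$ then factors through $A\to P_{-1}$. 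Hypothesis (3) gives $\ext^1(C,P_{-1})=0$, so the map $A\to P_{-1}$ extends to $B$ by the same exact sequence with $P_{-1}$ in place of $Q$ (using $\ext^1(B,P_{-1})=0$). Composing, $h$ extends to $B$. This proves (2), and with it the exactness of the left triangle.

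\emph{Right half.} To obtain the right half of the resolution, take the cobase change of $A\to P_{-1}$ along $A\to B$ (Lemma~\ref{BH}(2)). This gives an $\Mbe$-triangle $B\to M\to A'\dashrightarrow$ in $\xi$, so $M\in\QGP$ by extension closure. It also gives an $\Mbe$-triangle $P_{-1}\to M\to C\dashrightarrow$ in $\xi$. The latter splits: its class lies in $\ext^1(C,P_{-1})=0$, and for $\xi$-triangles the realized extension is identified with $\ext^1$, a standard fact for proper classes (cf.\ \cite{JDP}). Hence $C$ is a direct summand of $M\in\QGP$, and $C\in\QGP$ because $\QGP$ is closed under direct summands (Remark~\ref{KZB}). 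This route avoids constructing the right half explicitly.

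If the identification of extensions with $\ext^1$ is unavailable, I would instead show the splitting by hand. By Lemma~\ref{FJ}, splitting amounts to lifting $1_C$ through $M\to C$, or equivalently to showing that $M\to C$ is a retraction, using the $\Mcc(-,\P)$-exactness obtained above.
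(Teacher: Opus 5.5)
Your steps $(1)\Rightarrow(2)\Rightarrow(3)$ are fine. In $(3)\Rightarrow(1)$, the cobase change producing $B\to M\to A'$ (so $M\in\mathcal{QGP}(\xi)$) and $P_{-1}\to M\to C$ is exactly the paper's construction. The gap is the splitting of $P_{-1}\to M\to C$.

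You justify it by identifying $\mathbb E_\xi(C,P_{-1})$ with $\ext^1(C,P_{-1})$, but no such identification holds in this setting. The group $\ext^0(X,Y)$ agrees with $\mathcal C(X,Y)$ only under extra hypotheses (Lemma~\ref{LZHL}, Proposition~\ref{TTTT}), and accordingly $\mathbb E_\xi(C,Y)$ can be nonzero while $\ext^1(C,Y)=0$. For the object $E$ of Example~\ref{separatingexample}, the $\xi$-triangle $E\to0\to E[1]\xrightarrow{1}E[1]$ does not split. Yet $E[1]$ has a $\xi$-projective resolution with all terms zero, so $\ext^1(E[1],E)=0$.

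A $\xi$-projective coefficient does not rescue the claim. Take $D^b(\operatorname{mod}\Lambda)$ for a finite-dimensional algebra $\Lambda$, with the analogous cohomological proper class. There $\mathbb E_\xi(N[-1],\Lambda)\cong\operatorname{Ext}^2_\Lambda(N,\Lambda)$, which is nonzero for suitable $\Lambda$ and $N$. However, $\ext^1(N[-1],\Lambda)=0$: $N[-1]$ is resolved by a shifted projective resolution $\mathbf P[-1]$, and $\operatorname{Hom}(P_i[-1],\Lambda)=0$.

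So the vanishing of $\ext^1(C,P_{-1})$ does not by itself split a $\xi$-triangle $P_{-1}\to M\to C$; the fact that $M\in\mathcal{QGP}(\xi)$ has to enter. Your fallback does not provide this. It only restates what splitting means, and lifting $1_C$ through $M\to C$ is a covariant condition that the $\mathcal C(-,\mathcal P(\xi))$-exactness you established does not control.

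The repair is the device you already used in your upgrade step, applied to $P_{-1}\to M\to C$ with coefficient $P_{-1}$. Lemma~\ref{LZHL} gives an exact sequence $\ext^0(M,P_{-1})\to\ext^0(P_{-1},P_{-1})\to\ext^1(C,P_{-1})=0$. The first term is $\mathcal C(M,P_{-1})$ by Proposition~\ref{TTTT}(1), because $M\in\mathcal{QGP}(\xi)$. The second is $\mathcal C(P_{-1},P_{-1})$, because $P_{-1}\in\mathcal P(\xi)$. By naturality, $\mathcal C(M,P_{-1})\to\mathcal C(P_{-1},P_{-1})$ is surjective. Hence $P_{-1}\to M$ is a section, and the triangle splits by Lemma~\ref{FJ}. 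Then $C$ is a direct summand of $M$ and lies in $\mathcal{QGP}(\xi)$. This is precisely the paper's proof.

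Once this is in place, the left half, the exactness of $K\to P\to C$ and the upgrade $(3)\Rightarrow(2)$ can all be dropped. The upgrade is correct but not needed. The exactness discussion would be incomplete anyway, since it never addresses injectivity of $\mathcal C(C,Q)\to\mathcal C(P,Q)$.
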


	\begin{proof}
		(1) $\Rightarrow$ (2) It follows from Proposition \ref{TTTT}(1).

		(2) $\Rightarrow$ (3) It is trivial since $\P\subseteq\QP$.

		(3) $\Rightarrow$ (1) Since $A\in\QGP$, there is an $\mathbb{E}$-triangle $A\longrightarrow P \longrightarrow K\dashrightarrow$  with $P\in\P$ and $K\in\QGP$. Then we have the following commutative diagram
		\[
			\xymatrix{
				A\ar[d]\ar[r]&B\ar[d]\ar[r]&C\ar@{=}[d]\ar@{-->}[r] &\\
				P\ar[d]\ar[r]&G\ar[r]\ar[d]&C\ar@{-->}[r] &\\
				K\ar@{=}[r]\ar@{-->}[d]&K\ar@{-->}[d]\\
				&&}
		\]
		where all rows and columns are $\mathbb{E}$-triangles in $\xi$ by Lemma \ref{BH}(2) and Lemma \ref{Thm3.2}. So we obtain $G\in\QGP$ by Remark \ref{KZB}. For the $\mathbb{E}$-triangle $P\longrightarrow G \longrightarrow C\dashrightarrow$, we have the following commutative diagram by hypothesis
		\[
			\xymatrix{
				&\Mcc(C,P)\ar[r]\ar[d]&\Mcc(G,P)\ar[r]\ar[d]^{\varphi}_{\simeq}&\Mcc(P,P)\ar@{..>}[r]\ar[d]_{\simeq}^{\psi} & 0&\\
				0\ar[r]&\ext^0(C,P)\ar[r]&\ext^0(G,P)\ar[r]&\ext^0(P,P)\ar[r]&\ext^1(C,P)=0.
			}
		\]
		where $\varphi$ and $\psi$ are isomorphisms by Proposition \ref{TTTT}(1) since $P\in\P\subseteq \QGP$. Hence, the $\mathbb{E}$-triangle $P\longrightarrow G \longrightarrow C\dashrightarrow$ is split, and hence $C\in\QGP$ by \cite[Theorem 4.8]{He}.
	\end{proof}

	\begin{lem}[{\citep[Lemma 4.6]{He}}]\label{Dec30}
		(1) If $A\stackrel{f}\longrightarrow B \stackrel{f'}\longrightarrow C  \stackrel{\delta}\dashrightarrow$ and $B\stackrel{g}\longrightarrow D\stackrel{g'}\longrightarrow E  \stackrel{\delta'}\dashrightarrow$ are both $\Mcc(-,\QP)$-exact   $\mathbb{E}$-triangles in $\xi$, then we have the following commutative diagram:
		\[
			\xymatrix{
				A\ar@{=}[d] \ar[r]^{f} & B  \ar[r]^{f'}\ar[d]^{g} & C \ar[d]^d \ar@{-->}[r]^{\delta} &  \\
				A\ar[r]^{h} & D\ar[d]^{g'} \ar[r]^{h'} & F\ar[d]^e \ar@{-->}[r]^{\delta''} & \\
				& E\ar@{=}[r]\ar@{-->}[d]^{\delta'}&E\ar@{-->}[d]^{f'_*\delta'} \\
				& & &}
		\]
		where all rows and columns are $\Mcc(-,\QP)$-exact  $\mathbb{E}$-triangles in $\xi$.

		(2) If $A\stackrel{f}\longrightarrow B \stackrel{f'}\longrightarrow C  \stackrel{\delta}\dashrightarrow$ and $D\stackrel{g}\longrightarrow C\stackrel{g'}\longrightarrow E  \stackrel{\delta'}\dashrightarrow$ are both $\Mcc(-,\QP)$-exact   $\mathbb{E}$-triangles in $\xi$, then we have the following commutative diagram:
		\[
			\xymatrix{
				A\ar@{=}[d] \ar[r]^{d} & F  \ar[r]^{e}\ar[d]^{h} & D \ar[d]^{g} \ar@{-->}[r]^{g^*\delta} &  \\
				A\ar[r]^f & B\ar[d]^{h'} \ar[r]^{f'} & C\ar[d]^{g'} \ar@{-->}[r]^{\delta} & \\
				& E\ar@{=}[r]\ar@{-->}[d]^{\delta''}&E\ar@{-->}[d]^{\delta'} \\
				& & &}
		\]
		where all rows and columns are $\Mcc(-,\QP)$-exact  $\mathbb{E}$-triangles in $\xi$.
	\end{lem}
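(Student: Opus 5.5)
The plan is to build both diagrams with the octahedral-type axioms inside the extriangulated category $(\Mcc,\Mbe_\xi,\mathfrak s_\xi)$ of Lemma~\ref{Thm3.2}, and then to check $\Mcc(-,\QP)$-exactness by hand. Fix $Q\in\QP$. For an $\Mbe$-triangle $X\xrightarrow{u}Y\xrightarrow{v}Z\dashrightarrow$, the sequence $\Mcc(Z,Q)\xrightarrow{v^*}\Mcc(Y,Q)\xrightarrow{u^*}\Mcc(X,Q)$ is always exact. Hence $\Mcc(-,Q)$-exactness reduces to two conditions: $v^*$ is injective and $u^*$ is surjective.

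For (1), apply (ET4) to the composable $\xi$-inflations $f$ and $g$. This gives $F$, the morphisms $h=gf$, $h'$, $d$, $e$, and the $\Mbe$-triangles $A\xrightarrow{h}D\xrightarrow{h'}F\overset{\delta''}{\dashrightarrow}$ and $C\xrightarrow{d}F\xrightarrow{e}E\overset{f'_*\delta'}{\dashrightarrow}$ with the stated commutativities. Both triangles lie in $\xi$ because $\xi$ is proper and the structure is extriangulated by Lemma~\ref{Thm3.2}. The first row and the middle column are given, so only the two new triangles need checking.
\begin{itemize}
\item For the middle row, $h^*=f^*g^*$ is a composite of surjections, so it is surjective.
\item Injectivity of $h'^*$ is the main point. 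If $\theta h'=0$, then $\theta h' g=\theta d f'=0$. Since $f'^*$ is injective by the exactness of the first row, $\theta d=0$. Thus $\theta$ factors through $e$, say $\theta=\mu e$. Then $0=\theta h'=\mu e h'=\mu g'$, and injectivity of $g'^*$ gives $\mu=0$. Hence $\theta=0$.
\item For the right column, injectivity of $e^*$ follows from $h'^*e^*=g'^*$ being injective.
\item Surjectivity of $d^*$: given $\varphi\colon C\to Q$, extend $\varphi f'$ along $g$ to $\psi\colon D\to Q$ with $\psi g=\varphi f'$. Then $\psi h=\varphi f' f=0$, so $\psi=\theta h'$ for some $\theta\colon F\to Q$. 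Now $(\theta d-\varphi)f'=\theta h'g-\psi g=0$, and injectivity of $f'^*$ yields $\theta d=\varphi$.
\end{itemize}

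For (2), apply the dual axiom $(\mathrm{ET4})^{\mathrm{op}}$ to the composable $\xi$-deflations $f'$ and $g'$. This gives $F$ together with $\Mbe$-triangles $A\xrightarrow{d}F\xrightarrow{e}D\overset{g^*\delta}{\dashrightarrow}$ and $F\xrightarrow{h}B\xrightarrow{h'}E\overset{\delta''}{\dashrightarrow}$, where $h'=g'f'$, $hd=f$ and $f'h=ge$. Both again lie in $\xi$.
\begin{itemize}
\item Injectivity: $e^*$ and $h'^*=f'^*g'^*$ are injective by the same style of argument as in (1). For $e^*$, if $\mu e=0$ then $\mu$ factors through the cokernel-type map and one reduces to the injectivity of $g'^*$.
\item Surjectivity of $d^*$: given $\varphi\colon A\to Q$, extend along $f$ to $\beta$ with $\beta f=\varphi$. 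Then $\beta h d=\beta f=\varphi$.
\item Surjectivity of $h^*$, the expected obstacle: given $\varphi\colon F\to Q$, first extend $\varphi d$ along $f$ to $\beta\colon B\to Q$ with $\beta f=\varphi d$. Then $(\varphi-\beta h)d=0$, so $\varphi-\beta h=\gamma e$ for some $\gamma\colon D\to Q$. Extend $\gamma$ along $g$ as $\gamma=\gamma' g$. Finally $\varphi=\beta h+\gamma' g e=\beta h+\gamma' f' h=(\beta+\gamma' f')h$.
\end{itemize}

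The step needing care throughout is that deflations are not epimorphisms in an extriangulated category. Every cancellation must therefore be justified by the injectivity of $v^*$ coming from the given $\Mcc(-,\QP)$-exactness, never by epicness.
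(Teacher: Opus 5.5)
Your overall route is the natural one, and most of it is correct. Applying (ET4) and $(\mathrm{ET4})^{\mathrm{op}}$ inside $(\mathcal C,\mathbb E_\xi,\mathfrak s_\xi)$ gives the two new triangles in $\xi$. Reducing $\mathcal C(-,Q)$-exactness to injectivity of $v^*$ plus surjectivity of $u^*$ is valid. All four checks in part (1) are right. In part (2), the injectivity of $h'^*=f'^*g'^*$ and the surjectivity of $d^*$ and $h^*$ are also right. (The paper only quotes this lemma from He's earlier work, so there is no in-paper proof to compare with.)

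The one step you have not actually proved is the injectivity of $e^*\colon\mathcal C(D,Q)\to\mathcal C(F,Q)$ in part (2), and the justification you sketch does not work. The Hom sequence of $A\xrightarrow{d}F\xrightarrow{e}D\dashrightarrow$ begins at $\mathcal C(D,Q)$, so $\mu e=0$ by itself yields no factorization of $\mu$ through anything. Unlike the right column in (1), where $eh'=g'$ gave $h'^*e^*=g'^*$, the diagram in (2) contains no composite ending in $e$ whose pullback is known to be injective. Nor is the injectivity of $g'^*$ the relevant input; what is needed is the injectivity of $f'^*$ together with $g'g=0$.

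A correct chase is as follows. Suppose $\mu e=0$. By surjectivity of $g^*$, write $\mu=\nu g$ with $\nu\colon C\to Q$. Then $(\nu f')h=\nu ge=\mu e=0$. Exactness of $\mathcal C(E,Q)\xrightarrow{h'^*}\mathcal C(B,Q)\xrightarrow{h^*}\mathcal C(F,Q)$ gives $\rho\colon E\to Q$ with $\nu f'=\rho h'=\rho g'f'$. Injectivity of $f'^*$ forces $\nu=\rho g'$, and therefore $\mu=\rho g'g=0$. With this chase inserted, your proof is complete.
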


	\begin{prop}\label{prop:qgp-resolution-replacement}
		$(1)$ Let $M$ be an object in $\Mcc$. Assume that there is a $\xi$-exact complex
		\[
			\cdots\>G_n\>\cdots\>G_1\>G_0\>M\>0 \tag{3.1}
		\]
		with all $G_n\in\QGP$. Then there is a $\xi$-exact complex
		\[
			\cdots\>P_n\>\cdots\>P_1\>P_0\>M\>0 \tag{3.2}
		\]
		with all $P_n\in\P$. Moreover, if the $\xi$-exact complex (3.1) is $\Mcc(-,\QP)$-exact, then so is (3.2).

        $(2)$ Let $M$ be an object in $\Mcc$. Assume that there is a $\xi$-exact complex
		\[
			0\>M\>G_{-1}\>G_{-2}\>\cdots\>G_{-n}\>\cdots\tag{3.3}
		\]
		with all $G_{-n}\in\QGP$. Then there is a $\xi$-exact complex
		\[
			0\>M\>P_{-1}\>P_{-2}\>\cdots\>P_{-n}\>\cdots\tag{3.4}
		\]
		with all $P_{-n}\in\P$. Moreover, if the $\xi$-exact complex (3.3) is $\Mcc(-,\QP)$-exact, then so is (3.4).
	\end{prop}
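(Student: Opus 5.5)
We would prove (1) by an inductive replacement of each $G_n$ by a $\xi$-projective, in the standard manner for Gorenstein resolutions. Write the resolution triangles of (3.1) as $K_{n+1}\to G_n\to K_n\dashrightarrow$ in $\xi$, with $K_0=M$. Since $G_0\in\QGP$, there is a $\Mcc(-,\QP)$-exact $\Mbe$-triangle $G_0'\to P_0\to G_0\dashrightarrow$ in $\xi$ with $P_0\in\P$ and $G_0'\in\QGP$.

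\emph{Step 1.} Apply Lemma~\ref{BH}(1), inside $(\Mcc,\Mbe_\xi,\mathfrak s_\xi)$ by Lemma~\ref{Thm3.2}, to the composite deflation $P_0\to G_0\to M$. This yields an object $M_1$ together with two $\Mbe$-triangles in $\xi$:
\[
K_1'\;:\; G_0'\to M_1\to K_1\dashrightarrow
\qquad\text{and}\qquad
M_1\to P_0\to M\dashrightarrow .
\]
The second triangle is obtained from Lemma~\ref{BH}(1) together with the saturation/composition of $\xi$-deflations (\cite[Corollary 3.5]{JDP}).

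\emph{Step 2.} Splice $K_1'$ with the tail of (3.1) to obtain a $\xi$-exact complex
\[
\cdots\to G_2\to G_1\oplus G_0'\to M_1\to 0 .
\]
This uses the triangle $K_2\to G_1\to K_1$ together with $G_0'\to M_1\to K_1$. Pulling back along $M_1\to K_1$ via Lemma~\ref{BH}(1) gives $X$ with triangles $K_2\to X\to M_1$ and $G_0'\to X\to G_1$. By Remark~\ref{KZB}, $X\in\QGP$, since $\QGP$ is closed under $\xi$-extensions. So $M_1$ again has a $\QGP$-resolution of the same shape: $\cdots\to G_2\to X\to M_1\to0$.

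\emph{Step 3.} Iterate Steps 1 and 2 to produce $P_1,P_2,\dots$ and splice to get (3.2).

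For the ``moreover'' clause, every triangle used must be $\Mcc(-,\QP)$-exact. The triangles $G_0'\to P_0\to G_0$ are $\Mcc(-,\QP)$-exact by the definition of $\QGP$. Given that the resolution triangles of (3.1) are $\Mcc(-,\QP)$-exact, we replace Lemma~\ref{BH}(1) by Lemma~\ref{Dec30}(2), whose output triangles are all $\Mcc(-,\QP)$-exact. Concretely, apply Lemma~\ref{Dec30}(2) to $G_0'\to P_0\to G_0$ and $K_1\to G_0\to M$; this furnishes $M_1\to P_0\to M$ and $G_0'\to M_1\to K_1$, both $\Mcc(-,\QP)$-exact. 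The analogous step in Step 2 is justified by Lemma~\ref{HH}(2), since $G_1\in\QGP$ makes $G_0'\to X\to G_1$ $\Mcc(-,\QP)$-exact. Also, $K_2\to X\to M_1$ is a base change of a $\Mcc(-,\QP)$-exact triangle. We expect this bookkeeping of $\Mcc(-,\QP)$-exactness through the iteration to be the main obstacle. If the base change step resists, we would instead check it via the $3\times3$ lemma on Hom sequences, as in the proof of Lemma~\ref{HH}.

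For (2) we dualize, working with cokernels of (3.3). Let $L_0=M$ and let $L_{-n}\to G_{-n}\to L_{-n-1}$ be the resolution triangles. For each $G_{-n}$ choose a $\Mcc(-,\QP)$-exact triangle $G_{-n}\to P\to G''\dashrightarrow$ with $P\in\P$ and $G''\in\QGP$. Apply Lemma~\ref{BH}(2) (respectively Lemma~\ref{Dec30}(1)) to the composite inflation $M\to G_{-1}\to P_{-1}$. This gives $M\to P_{-1}\to N_{-1}\dashrightarrow$ and $L_{-2}\to N_{-1}\to G''$. Then form $N_{-1}\to G_{-2}\oplus G''$-type extensions, using closure of $\QGP$ under $\xi$-extensions, and iterate. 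The $\Mcc(-,\QP)$-exactness again follows from Lemma~\ref{Dec30}(1) and Lemma~\ref{HH}(2).
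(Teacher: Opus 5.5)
Your proposal is correct and follows essentially the paper's proof. The paper also uses an $(\mathrm{ET4})^{\mathrm{op}}$ diagram for the composite deflation $P_0\to G_0\to M$ to produce $M_1\to P_0\to M$ and $G_0'\to M_1\to K_1$; this axiom, not Lemma~\ref{BH}(1), is what Step~1 needs. It then applies Lemma~\ref{BH}(1) to absorb $G_0'$ into an extension $X$ of $G_1$ (not a direct sum $G_1\oplus G_0'$), iterates, and uses Lemma~\ref{Dec30}(2) for the $\Mcc(-,\QP)$-exactness of the new triangles.

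Your base-change justification for $K_2\to X\to M_1$ is also sound. Surjectivity on $\Mcc(-,Q)$ comes from composing with $X\to G_1$. Injectivity comes from the same chase, using Lemma~\ref{HH}(2) on $G_0'\to X\to G_1$, that the paper packages as its $3\times3$ argument. Part (2) dualizes as you sketch.
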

	\begin{proof}
        We only prove $(1)$. The proof of $(2)$ is similar.

		$(1)$ By (3.1), we have the following $\Mbe$-triangles
		\[
			K_1\>G_0\>M\dashrightarrow\text{\, and \,}  K_2\>G_1\>K_1\dashrightarrow
		\]
		in $\xi$. Since $G_0\in\QGP$, there exists a $\Mcc(-,\QP)$-exact $\Mbe$-triangle $G\>P_0\>G_0\dashrightarrow$ in $\xi$ with $P_0\in\P$ and $G\in\QGP$. By Lemma \ref{Thm3.2} and $\rm(ET4)^{op}$, we have the following commutative diagram
		\[
			\xymatrix{
				G\ar@{=}[d] \ar[r] & L \ar[r]\ar[d] & K_1 \ar[d] \ar@{-->}[r] &  \\
				G\ar[r] & P_0\ar[d] \ar[r] & G_0\ar[d] \ar@{-->}[r]& \\
				& M\ar@{=}[r]\ar@{-->}[d]&M\ar@{-->}[d] \\
				& & &}\tag{$*$}
		\]
		where all  rows and columns are  $\mathbb{E}$-triangles in $\xi$. By Lemma \ref{BH}(1) and Lemma \ref{Thm3.2}, we obtain the following commutative diagram
		\[
			\xymatrix{
				&G\ar@{=}[r]\ar[d]&G\ar[d]\\
				K_2\ar[r]\ar@{=}[d]&G'\ar[r]\ar[d]&L\ar[d]\ar@{-->}[r]&\\
				K_2\ar[r]&G_1\ar[r]\ar@{-->}[d]&K_1\ar@{-->}[r]\ar@{-->}[d]&\\
				&&&
			}\tag{$**$}
		\]
		whose rows and columns are $\mathbb{E}$-triangles in $\xi$. It follows from  Lemma \ref{HH}(1) that  $G'\in\QGP$. Repeating this process, we can obtain a $\xi$-exact complex
		\[
			\cdots\>P_n\>\cdots\>P_1\>P_0\>M\>0
		\]
		with all $P_n\in\P$. Moreover, if the complex (3.1) is $\Mcc(-,\QP)$-exact, then the $\Mbe$-triangle $K_1\>G_0\>M\dashrightarrow$ is $\Mcc(-,\QP)$-exact. By applying Lemma \ref{Dec30}(2) to diagram $(*)$, we conclude that the $\Mbe$-triangle $G\>L\>K_1\dashrightarrow$ is $\Mcc(-,\QP)$-exact. For any $Q\in\QP$, applying the functor $\Mcc(-,Q)$ to the diagram ($**$), one obtains the following commutative diagram of abelian groups by Lemma \ref{HH}(2):
		\[
			\xymatrix{
				&0\ar[d]&0\ar[d]&0\ar[d]\\
				0\ar[r]&\mathcal{C}(K_1,Q)\ar[d]\ar[r]&\mathcal{C}(G_1,Q)\ar[d]\ar[r] & \Mcc(K_2,Q)\ar@{=}[d]\ar[r]&0\\
				0\ar@{..>}[r]&\mathcal{C}(L,Q) \ar[d]\ar[r]& \mathcal{C}(G',Q)  \ar[r]\ar[d]& \mathcal{C}(K_2,Q) \ar[d] \ar@{..>}[r]&0  \\
				0\ar[r]&\mathcal{C}(G,Q)\ar[d]\ar@{=}[r] & \mathcal{C}(G,Q)\ar[d] \ar[r] & 0 \ar[d] \ar[r]&0\\
				&0&0&0
			}
		\]
		So the $\Mbe$-triangle $K_2\>G'\>L\dashrightarrow$ is $\Mcc(-,Q)$-exact by $3\times 3$ Lemma. Repeating this process, we obtain that the $\xi$-exact complex (3.2) is $\Mcc(-,\QP)$-exact.
	\end{proof}

	\begin{cor}\label{Cor3.8}
		$(1)$ Let $M$ be an object in $\Mcc$. Then $M$ admits a $\xi$-exact complex
		\[
			\cdots\>G_n\>\cdots\>G_1\>G_0\>M\>0
		\]
		with all $G_n\in\QGP$, which is $\Mcc(-,\QP)$-exact if and only if $M$ admits a $\xi$-exact complex
		\[
			\cdots\>P_n\>\cdots\>P_1\>P_0\>M\>0
		\]
		with all $P_n\in\P$, which is $\Mcc(-,\QP)$-exact.

        $(2)$ Let $M$ be an object in $\Mcc$. Then $M$ admits a $\xi$-exact complex
		\[
			0\>M\>G_{-1}\>G_{-2}\>\cdots\>G_{-n}\>\cdots
		\]
		with all $G_{-n}\in\QGP$, which is $\Mcc(-,\QP)$-exact if and only if $M$ admits a  $\xi$-exact complex
		\[
			0\>M\>P_{-1}\>P_{-2}\>\cdots\>P_{-n}\>\cdots
		\]
		with all $P_{-n}\in\P$, which is $\Mcc(-,\QP)$-exact.
	\end{cor}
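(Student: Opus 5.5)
The two directions of each part are handled separately. The implication ``$\Leftarrow$'' is immediate in both parts, and ``$\Rightarrow$'' is essentially Proposition~\ref{prop:qgp-resolution-replacement}.

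For ``$\Leftarrow$'' in (1), suppose $M$ admits a $\Mcc(-,\QP)$-exact $\xi$-exact complex $\cdots\to P_1\to P_0\to M\to0$ with all $P_n\in\P$. Since $\P\subseteq\QGP$ by Remark~\ref{KZB}, the same complex is a $\Mcc(-,\QP)$-exact $\xi$-exact complex with all terms in $\QGP$. The same observation gives ``$\Leftarrow$'' in (2).

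For ``$\Rightarrow$'' in (1), take a $\Mcc(-,\QP)$-exact $\xi$-exact complex (3.1) with $G_n\in\QGP$. Proposition~\ref{prop:qgp-resolution-replacement}(1) produces a $\xi$-exact complex (3.2) with $P_n\in\P$. Its ``moreover'' clause says that (3.2) is again $\Mcc(-,\QP)$-exact, because (3.1) is. This is exactly the required complex.

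For ``$\Rightarrow$'' in (2), apply Proposition~\ref{prop:qgp-resolution-replacement}(2) to the coresolution (3.3). It yields (3.4) with all $P_{-n}\in\P$, again together with the transfer of $\Mcc(-,\QP)$-exactness.

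There is no real obstacle here, since all the work is contained in Proposition~\ref{prop:qgp-resolution-replacement}. The only point to check is that the $\Mcc(-,\QP)$-exactness hypothesis is exactly the one needed in its ``moreover'' clause, and it is. If one did not want to rely on the omitted proof of part (2) of that proposition, the plan would be to dualize the diagram argument of part (1). Using $\rm(ET4)$ and Lemma~\ref{BH}(2), one replaces each $G_{-n}$ by a $\xi$-projective via a $\Mcc(-,\QP)$-exact $\Mbe$-triangle $G_{-n}\to P\to G''\dashrightarrow$ with $P\in\P$ and $G''\in\QGP$. Exactness would then be transferred using Lemma~\ref{Dec30}(1), Lemma~\ref{HH} and the $3\times3$ lemma.
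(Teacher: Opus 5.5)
Your proposal is correct and matches the paper: the corollary is stated there without a separate proof, as an immediate consequence of Proposition~\ref{prop:qgp-resolution-replacement} for the forward direction (including its ``moreover'' clause). The converse follows from $\mathcal{P}(\xi)\subseteq\mathcal{QGP}(\xi)$, exactly as you argue.
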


	\begin{thm}\label{Thm3.12}
		Let $M$ be an object in $\Mcc$. Then the following statements are equivalent:

		(1) $M$ is a quasi-$\xi$-$\mathcal{G}$projective object.

		(2)  There exist $\Mcc(-,\QP)$-exact and $\Mcc(\QP,-)$-exact  $\Mbe$-triangles $K_{n+1}\>G_n\>K_n\dashrightarrow$ in $\xi$ such that $G_n\in\QGP$ for any $n\in\mathbb{Z}$ and $K_0=M$.

		(3) There exist $\Mcc(-,\QP)$-exact $\Mbe$-triangles $K_{n+1}\>G_n\>K_n\dashrightarrow$ in $\xi$ such that $G_n\in\QGP$ for any $n\in\mathbb{Z}$ and $K_0=M$.

		(4) There exists a $\Mcc(-,\QP)$-exact and $\Mcc(\QP,-)$-exact $\Mbe$-triangle $M\>G\>M\dashrightarrow$ in $\xi$ with $G\in\QGP$.

		(5) There exists a $\Mcc(-,\QP)$-exact $\Mbe$-triangle $M\>G\>M\dashrightarrow$ in $\xi$ with $G\in\QGP$.
	\end{thm}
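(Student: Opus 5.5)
The cycle of implications I plan to establish is $(1)\Rightarrow(2)\Rightarrow(3)\Rightarrow(1)$ together with $(2)\Rightarrow(4)\Rightarrow(5)\Rightarrow(3)$. The implications $(2)\Rightarrow(3)$ and $(4)\Rightarrow(5)$ are trivial.

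For $(1)\Rightarrow(2)$, take a complete quasi-$\xi$-projective resolution with $K_0=M$. Its terms $P_n$ lie in $\P\subseteq\QGP$, and its resolution triangles are $\Mcc(-,\QP)$-exact by definition. They are also $\Mcc(\QP,-)$-exact, since every $\xi$-triangle is $\Mcc(\P,-)$-exact. Here I am reading $\Mcc(\QP,-)$-exactness in the sense needed: if the intended meaning is exactness with respect to $\Mcc(Q,-)$ for $Q\in\QP$, then I expect one must use $K_n\in\QGP$. Lemma~\ref{HH}(2), applied with $\widehat{\mathcal{QP}}(\xi)$, gives the contravariant exactness. For the covariant one, I would argue with Proposition~\ref{TTTT}-type vanishing, and this is where I anticipate some care is needed.

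For $(2)\Rightarrow(4)$, I use the standard trick of taking coproducts. Set $G=\bigoplus_{n}G_n$ and splice the triangles $K_{n+1}\to G_n\to K_n$ into a triangle $M\to G\to M$. This requires countable coproducts, which are not assumed. So instead I follow the Bennis–Mahdou approach only when coproducts exist, and otherwise I abandon $(2)\Rightarrow(4)$ in favour of $(4)\Rightarrow(5)$ and then $(5)\Rightarrow(3)$. For $(5)\Rightarrow(3)$, given $M\to G\to M$, I take $K_n=M$ and $G_n=G$ for all $n$. All the required triangles are then the same triangle, which is $\Mcc(-,\QP)$-exact.

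The main step is $(3)\Rightarrow(1)$, which I would carry out as follows.
\begin{itemize}
\item The left half $\cdots\to G_1\to G_0\to M\to 0$ is $\xi$-exact and $\Mcc(-,\QP)$-exact with terms in $\QGP$. Proposition~\ref{prop:qgp-resolution-replacement}(1) turns it into a $\Mcc(-,\QP)$-exact $\xi$-projective resolution of $M$.
\item The right half $0\to M\to G_{-1}\to\cdots$ is handled by Proposition~\ref{prop:qgp-resolution-replacement}(2), which yields a $\Mcc(-,\QP)$-exact coresolution $0\to M\to P_{-1}\to\cdots$ by $\xi$-projectives.
\item Splicing the two halves at $M$ gives a $\xi$-exact complex of $\xi$-projectives whose resolution triangles are all $\Mcc(-,\QP)$-exact, that is, a complete quasi-$\xi$-projective resolution with $K_0=M$.
\end{itemize}
Hence $M\in\QGP$ by Lemma~3.4(2).

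To close the loop in the reverse direction, I need $(1)\Rightarrow(4)$ directly. Given $M\in\QGP$, I use the triangles from Lemma~3.4(3). Then I would try to build $M\to G\to M$ with $G\in\QGP$ by an $\mathrm{(ET4)}$ gluing of $K_1\to P_0\to M$ and $M\to P_{-1}\to K_{-1}$, taking $G$ to be the middle term from Lemma~\ref{BH}. This object lies in $\QGP$ because $\QGP$ is closed under $\xi$-extensions (Remark~\ref{KZB}), and Lemma~\ref{Dec30} preserves $\Mcc(-,\QP)$-exactness. The main obstacle is producing exactly a triangle of the shape $M\to G\to M$ without infinite coproducts. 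My first attempt is to realize an extension class in $\Mbe_\xi(M,M)$ obtained by composing the two connecting classes via base and cobase change.
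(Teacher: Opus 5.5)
\textbf{Gap: neither implication out of (1) is established.} Your $(3)\Rightarrow(1)$, which replaces both halves via Proposition~\ref{prop:qgp-resolution-replacement} and splices, is correct and agrees with the paper. So is $(5)\Rightarrow(3)$, the constant choice $K_n=M$, $G_n=G$. The problem lies in the two implications leaving (1).

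For $(1)\Rightarrow(2)$ you take the resolution triangles $K_{n+1}\to P_n\to K_n\overset{\delta_n}{\dashrightarrow}$ of a complete resolution. You then need $f^*\delta_n=0$ for every $f\colon Q\to K_n$ with $Q\in\mathcal{QP}(\xi)$ (Lemma~\ref{FJ}). Nothing available gives this:
\begin{itemize}
\item $\xi$-projectivity only handles $Q\in\mathcal{P}(\xi)$.
\item Lemma~\ref{HH}(2) and Proposition~\ref{TTTT}(1) have the wrong variance: they control $\mathcal{C}(-,N)$ and $\xi\mathrm{xt}_\xi(G,N)$ with the quasi-$\xi$-$\mathcal{G}$projective object in the first argument.
\item Quasi-projectivity of $Q$ only constrains $\xi$-triangles whose middle term is $Q$.
\end{itemize}
So this step is open, as your own hedging suggests.

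Condition (4) is never reached. You rightly drop $\bigoplus_n G_n$, but the replacement $(1)\Rightarrow(4)$ is left as a ``first attempt'', and it cannot work as described. The Yoneda composite of $\delta_0\in\mathbb{E}_\xi(M,K_1)$ and $\delta_{-1}\in\mathbb{E}_\xi(K_{-1},M)$ is a degree-two class from $K_{-1}$ to $K_1$, not an element of $\mathbb{E}_\xi(M,M)$. Gluing via (ET4) or Lemma~\ref{BH} produces triangles whose end terms are not both $M$.

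\textbf{The missing observation.} Under (1), the object $M$ itself lies in $\mathcal{QGP}(\xi)$, so split triangles suffice. They belong to $\xi$ because a proper class contains the split triangles, and they stay exact under every Hom functor, covariant or contravariant.
\begin{itemize}
\item For (2), take $K_0=G_0=G_{-1}=M$ and all other objects zero. Use the split triangles $0\to M\xrightarrow{1}M\dashrightarrow$ for $n=0$ and $M\xrightarrow{1}M\to 0\dashrightarrow$ for $n=-1$.
\item For (4), take the split triangle $M\to M\oplus M\to M\dashrightarrow$, where $M\oplus M\in\mathcal{QGP}(\xi)$ by Remark~\ref{KZB}.
\end{itemize}
This is exactly the paper's argument. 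With it, your remaining steps close the cycles $(1)\Rightarrow(2)\Rightarrow(3)\Rightarrow(1)$ and $(1)\Rightarrow(4)\Rightarrow(5)\Rightarrow(3)$.
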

	\begin{proof}
		(1) $\Rightarrow$ (2) Let $M$ be a quasi-$\xi$-$\mathcal{G}$projective object of $\Mcc$. Consider the split $\Mbe$-triangles
		\[
			0\>M\stackrel{1}\> M\dashrightarrow\text{\, and \,} M\stackrel{1}\>M\> 0\dashrightarrow
		\]
		in $\xi$, which are  $\Mcc(-,\QP)$-exact and $\Mcc(\QP,-)$-exact.

		(2) $\Rightarrow$ (3) It is clear.

		(1) $\Rightarrow$ (4)  Consider the following split $\Mbe$-triangle
		\[
			M \stackrel{\begin{tiny}\begin{bmatrix}
						1 \\
						0
					\end{bmatrix}\end{tiny}}\>M \oplus M \stackrel{[0 \hspace{5pt} 1]}{\longrightarrow} M\dashrightarrow
		\]
		in $\xi$, which is $\Mcc(-,\QP)$-exact and $\Mcc(\QP,-)$-exact. Since $M\in\QGP$, we have $M\oplus M\in\QGP$ by Remark \ref{KZB}.

		(4) $\Rightarrow$ (5) $\Rightarrow$ (3)  It is clear.

		(3) $\Rightarrow$ (1) It follows from Corollary \ref{Cor3.8}.
	\end{proof}

	Set $\left[\QGP\right]^1=\QGP$. Inductively define the following subcategories of $\Mcc$:
	\[
		\begin{split}
			\left[\QGP\right]^{n+1}=&\left\{M \in \Mcc \mid\text{there exists a $\Mcc(-,\QP)$-exact and $\xi$-exact complex}\right. \\
			&\cdots \> G_1 \> G_0 \> G_{-1} \> G_{-2} \>\cdots \text{\, in \,}\Mcc\\
			&\text{\, with all \,} G_i\in[\QGP]^{n} \text{\, and a $\xi$-resolution $\Mbe$-triangle \,} K_1\>G_0\>M\dashrightarrow\left.\right\}.
		\end{split}
	\]
    Dually, we can define the subcategories $\left[\QGI\right]^n$ of $\Mcc$ for any $n\geq 1$.

	\begin{thm}\label{nice}
		For any $n\geq 1$, $\left[\QGP\right]^n=\QGP$ and $\left[\QGI\right]^n=\QGI$.
	\end{thm}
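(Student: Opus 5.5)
We argue by induction on $n$, treating only $[\QGP]^n$ in detail; the injective statement $[\QGI]^n=\QGI$ is formally dual, using the duals of Lemma~\ref{HH}, Corollary~\ref{Cor3.8} and Theorem~\ref{Thm3.12}. The case $n=1$ is the definition. For the inductive step, assume $[\QGP]^{n}=\QGP$. We then need $[\QGP]^{n+1}=\QGP$, that is,
\[
    [\QGP]^{n+1}=\bigl\{M\mid M \text{ admits a complex as in the definition with all } G_i\in\QGP\bigr\}.
\]
So it suffices to prove that this class coincides with $\QGP$.

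For the inclusion $\QGP\subseteq[\QGP]^{n+1}$, take $M\in\QGP$ and a complete quasi-$\xi$-projective resolution with $K_0=M$. Its terms lie in $\P\subseteq\QGP=[\QGP]^n$ by Remark~\ref{KZB}, and it is $\xi$-exact and $\Mcc(-,\QP)$-exact. So $M\in[\QGP]^{n+1}$. Alternatively one may use the two split triangles from the proof of Theorem~\ref{Thm3.12}, (1)$\Rightarrow$(2).

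For the reverse inclusion, let $M\in[\QGP]^{n+1}$. By the inductive hypothesis there is a $\xi$-exact, $\Mcc(-,\QP)$-exact complex $\cdots\to G_1\to G_0\to G_{-1}\to\cdots$ with all $G_i\in\QGP$ and $K_0=M$. Its $\xi$-resolution $\Mbe$-triangles $K_{i+1}\to G_i\to K_i\dashrightarrow$ are then $\Mcc(-,\QP)$-exact triangles in $\xi$ with middle terms in $\QGP$. This is exactly condition (3) of Theorem~\ref{Thm3.12}, so $M\in\QGP$.

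The only real content is Theorem~\ref{Thm3.12}, (3)$\Rightarrow$(1). That step rests on Corollary~\ref{Cor3.8}, which replaces the $\QGP$-terms on each side of $M$ by $\xi$-projectives while keeping $\Mcc(-,\QP)$-exactness. The two one-sided $\P$-complexes then splice at $M$ to give a complete quasi-$\xi$-projective resolution. Since that theorem is already established, no step here should be a genuine obstacle. The one point to check is bookkeeping: the $\Mcc(-,\QP)$-exactness of the complex means, by definition, that every resolution triangle is $\Mcc(-,\QP)$-exact, which is exactly the hypothesis needed in (3).
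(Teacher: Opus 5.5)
Your proposal is correct and follows the paper's own proof. The paper also argues by induction on $n$: the inductive hypothesis makes $[\QGP]^{n+1}$ the same class as $[\QGP]^2$, and Theorem~\ref{Thm3.12} (1)$\Leftrightarrow$(3) identifies that class with $\QGP$, while your inclusion $\QGP\subseteq[\QGP]^{n+1}$ is the paper's ascending-chain remark.
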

	\begin{proof}
		We only prove the first equality, the second equality is similar. It is easy to see that
		\[
			\QGP\subseteq \left[\QGP\right]^2\subseteq \left[\QGP\right]^3 \subseteq\cdots
		\]
		is an ascending chain of subcategories of $\Mcc$. By Theorem \ref{Thm3.12} (1) $\Leftrightarrow$ (3), we have that \[\left[\QGP\right]^2=\QGP.\] The assertion follows by  induction on $n$.
	\end{proof}

	\section{Relative quasi-$\xi$-Gorenstein homological dimensions}
	\quad~ In this section, we study the properties of relative homological dimensions for the subcategory $\QGP$ (resp. $\QGI$) in an extriangulated category $\Mcc$. Furthermore, we give equivalent characterizations of objects with finite quasi-$\xi$-$\mathcal{G}$projective (resp. quasi-$\xi$-$\mathcal{G}$injective) dimension.

	The quasi-$\xi$-$\mathcal{G}$projective dimension $\Qgpd{M}$ of an object $M$ is defined inductively. When $M=0$, put $\Qgpd{M}=-1$.  If $M\in\QGP$, then define $\Qgpd{M}=0$. By induction, for an integer $n>0$, put $\Qgpd{M}\leqslant n$ if there exists an $\mathbb{E}$-triangle $K\rightarrow G\rightarrow M\dashrightarrow$ in $\xi$ with $G\in \QGP$ and $\Qgpd{K}\leqslant n-1$.

	We say that $\Qgpd{A}=n$ if $\Qgpd{A}\le n$ but $\Qgpd{A}\not\le n-1$. If no nonnegative integer $n$ satisfies $\Qgpd{A}=n$, we set $\Qgpd{A}=\infty$.  Dually, we can define  the quasi-$\xi$-$\mathcal{G}$injective dimension $\Qgid{M}$ of any object $M$ in $\Mcc$.

	We use $\widehat{\mathcal{QGP}}(\xi)$ (resp.  $\widehat{\mathcal{QGI}}(\xi)$)  to denote the full subcategory of $\Mcc$ whose objects have finite  quasi-$\xi$-$\mathcal{G}$projective (resp.  quasi-$\xi$-$\mathcal{G}$injective) dimension.

	\begin{prop}\label{Prop4.4}
		For an integer $n\geq0$, let $M$ be an object in $\Mcc$ with $\Qgpd{M}\leq n$. Then there exists an $\Mbe$-triangle
		\[
			K\>G\>M\dashrightarrow
		\]
		in $\xi$ with $G\in\QGP$ and $\pd{K}\leq n-1$.
	\end{prop}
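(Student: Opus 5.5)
We argue by induction on $n$. For $n=0$, $M\in\QGP$ and the $\Mbe$-triangle $0\to M\xrightarrow{1}M\dashrightarrow$ in $\xi$ works, since $\pd{0}=-1$. Assume $n\geq1$ and that the statement holds for $n-1$. By definition there is an $\Mbe$-triangle $K'\to G'\to M\dashrightarrow$ in $\xi$ with $G'\in\QGP$ and $\Qgpd{K'}\leq n-1$. By the induction hypothesis applied to $K'$, there is an $\Mbe$-triangle $K''\to G''\to K'\dashrightarrow$ in $\xi$ with $G''\in\QGP$ and $\pd{K''}\leq n-2$.

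Since $G''\in\QGP$, there is an $\Mbe$-triangle $G''\to P\to H\dashrightarrow$ in $\xi$ with $P\in\P$ and $H\in\QGP$; this is part of a complete quasi-$\xi$-projective resolution. Apply Lemma~\ref{BH}(2), via Lemma~\ref{Thm3.2}, to the two $\Mbe$-triangles $G''\to G''\to\cdots$ starting at $G''$, namely $G''\to G''\to K'\dashrightarrow$ (i.e. $K''\to G''\to K'$ rotated appropriately). More precisely, first take the pushout of $K''\to G''\to K'\dashrightarrow$ along $G''\to P$. This is awkward, so instead I proceed as follows.

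Apply Lemma~\ref{BH}(2) to $K''\to G''\to K'\dashrightarrow$ and the composite inflation $K''\to G''\to P$. The composite is a $\xi$-inflation since $\xi$-inflations compose, by $(\mathrm{ET4})$ in $(\Mcc,\Mbe_\xi,\mathfrak s_\xi)$. This gives $\Mbe$-triangles $K''\to P\to W\dashrightarrow$ and $K'\to W\to H\dashrightarrow$ in $\xi$. Then $\pd{W}\leq n-1$, because $K''\to P\to W$ has $P\in\P$ and $\pd{K''}\leq n-2$.

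Now apply Lemma~\ref{BH}(2) again to $K'\to G'\to M\dashrightarrow$ and $K'\to W\to H\dashrightarrow$. This yields an object $G$ with $\Mbe$-triangles $W\to G\to M\dashrightarrow$ and $G'\to G\to H\dashrightarrow$ in $\xi$. Since $G',H\in\QGP$ and $\QGP$ is closed under $\xi$-extensions (Remark~\ref{KZB}), we get $G\in\QGP$. Setting $K=W$ gives the required triangle $K\to G\to M\dashrightarrow$ with $\pd{K}\leq n-1$.

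The main point to check is that all constructed triangles lie in $\xi$, which Lemma~\ref{Thm3.2} guarantees, and the base case $n=1$. In that case $K'\in\QGP$, and we take $K''=0$ with $G''=K'$. Then $W=P$ has $\pd{W}\leq0$, and the argument goes through unchanged.
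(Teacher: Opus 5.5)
Your proof is correct, but it takes a different route from the paper. The paper does no induction. It quotes \cite[Theorem~5.3]{He}, which for $\Qgpd{M}\leq n$ already gives a $\xi$-exact complex $0\to P_n\to\cdots\to P_1\to G\to M\to 0$ with $P_i\in\P$ and $G\in\QGP$, and reads the triangle $K\to G\to M\dashrightarrow$ off it.

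You instead prove the statement directly by induction on $n$. The induction hypothesis for $K'$, together with a coresolution triangle $G''\to P\to H\dashrightarrow$, replaces $K'$ by an object $W$ with $\pd{W}\leq n-1$ lying in a triangle $K'\to W\to H\dashrightarrow$ with $H\in\QGP$. Lemma~\ref{BH}(2) and closure of $\QGP$ under $\xi$-extensions (Remark~\ref{KZB}) then carry this over to $M$.

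Your first step is exactly the construction of Lemma~\ref{GFC} applied to $K'$ at level $n-1$. So your argument shows that Proposition~\ref{Prop4.4} and Lemma~\ref{GFC} can be proved together by one induction. It uses only the defining triangles of $\QGP$, the axioms of $(\mathcal C,\mathbb E_\xi,\mathfrak s_\xi)$, and the extension-closure of $\QGP$. The paper's version is shorter and packages the conclusion as a full mixed resolution, but it leaves all the work to the external theorem.

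One correction. The triangles $K''\to P\to W\dashrightarrow$ and $K'\to W\to H\dashrightarrow$ come from $(\mathrm{ET4})$ in $(\mathcal C,\mathbb E_\xi,\mathfrak s_\xi)$, applied to the composable $\xi$-inflations $K''\to G''$ and $G''\to P$. They do not come from Lemma~\ref{BH}(2). Applied to $K''\to G''\to K'\dashrightarrow$ and $K''\to P\to W\dashrightarrow$, that lemma would instead give triangles of the form $P\to M'\to K'\dashrightarrow$ and $G''\to M'\to W\dashrightarrow$. You should also delete the abandoned first attempt at this step. With these edits, every triangle you use lies in $\xi$ by Lemma~\ref{Thm3.2}, and the argument, including the case $n=1$, goes through as written.
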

	\begin{proof}
The case $n=0$ follows from the split $\mathbb E$-triangle
$0\longrightarrow M\xrightarrow{1_M}M\dashrightarrow$,
since $\pd{0}=-1$. We may therefore assume $n\geq1$. Since $\Qgpd{M}\leq n$, there is a $\xi$-exact complex
		\[
			0\>P_n\>P_{n-1}\>\cdots\>P_2\>P_1\>G\>M\>0
		\]
		in $\Mcc$ with all $P_i\in\P$ and $G\in\QGP$ by \cite[Theorem 5.3]{He}, which implies that there is an $\Mbe$-triangle $K\>G\>M\dashrightarrow$ in $\xi$ with $G\in\QGP$ and $\pd{K}\leq n-1$.
	\end{proof}

	\begin{lem}\label{GFC}
		Let $M$ be an object in $\Mcc$ with $\Qgpd{M}\leq n<\infty$. Then there exists an $\Mbe$-triangle
		\[
			M\>L\>G^{\prime}\dashrightarrow
		\]
		in $\xi$ with $G^{\prime}\in\QGP$ and $\pd{L}\leq n$.
	\end{lem}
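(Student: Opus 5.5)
We begin with Proposition~\ref{Prop4.4}, which gives an $\Mbe$-triangle
\[
K\longrightarrow G\longrightarrow M\dashrightarrow
\]
in $\xi$ with $G\in\QGP$ and $\pd{K}\leq n-1$. Since $G\in\QGP$, the definition (or Lemma 3.5(3)) gives a $\Mcc(-,\QP)$-exact $\Mbe$-triangle
\[
G\longrightarrow P\longrightarrow G'\dashrightarrow
\]
in $\xi$ with $P\in\P$ and $G'\in\QGP$. The plan is to glue these two triangles so that $M$ sits in a triangle whose middle term is built from $K$ and $P$.

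The gluing uses $\rm(ET4)$ inside $(\Mcc,\Mbe_\xi,\mathfrak s_\xi)$, as allowed by Lemma~\ref{Thm3.2}. Equivalently, we apply Lemma~\ref{BH}(2)-type pushout reasoning to the inflation $G\to P$ together with the triangle $K\to G\to M$. Concretely, compose the $\xi$-inflations $K\to G$ and $G\to P$. Since $\xi$ is closed under composition of inflations, as in \cite[Corollary~3.5]{JDP}, the composite $K\to P$ is a $\xi$-inflation. This yields an $\Mbe$-triangle $K\to P\to L\dashrightarrow$ in $\xi$. Then $\rm(ET4)$ produces a commutative diagram whose rows and columns are $\Mbe$-triangles in $\xi$, and which contains
\[
M\longrightarrow L\longrightarrow G'\dashrightarrow .
\]
This is the desired triangle, with $G'\in\QGP$.

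It remains to bound $\pd{L}$, using the triangle $K\to P\to L\dashrightarrow$ with $P\in\P$ and $\pd{K}\leq n-1$. A $\xi$-projective resolution of $K$ of length at most $n-1$, spliced with $K\to P\to L$, gives a $\xi$-projective resolution of $L$ of length at most $n$. Thus $\pd{L}\leq n$. The case $n=0$ is automatic: then $K=0$ and $L\cong P$.

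The main obstacle is ensuring that the octahedral step stays inside $\xi$, that is, that the composite of $\xi$-inflations is a $\xi$-inflation and that the resulting triangle $M\to L\to G'$ lies in $\xi$. I would handle this by invoking $\rm(ET4)$ for the extriangulated structure $(\Mcc,\Mbe_\xi,\mathfrak s_\xi)$ of Lemma~\ref{Thm3.2}. In that structure all $\rm(ET4)$ output automatically lies in $\xi$, and composition of inflations is part of the axiom there.
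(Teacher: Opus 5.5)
Your proposal is correct and follows the paper's proof. Proposition~\ref{Prop4.4} gives $K\to G\to M$ with $\pd{K}\leq n-1$; this is glued with $G\to P\to G'$ via (ET4) in $(\Mcc,\Mbe_\xi,\mathfrak{s}_\xi)$ to obtain $K\to P\to L$ and $M\to L\to G'$, and then $\pd{L}\leq\pd{K}+1\leq n$, which your splicing argument simply makes explicit.
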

	\begin{proof}
		By Proposition \ref{Prop4.4}, the condition $\Qgpd{M}\leq n$ yields an $\Mbe$-triangle $K\>G\>M\dashrightarrow$ in $\xi$ with $G\in\QGP$ and $\pd{K}\leq n-1$. Since $G\in\QGP$, there also exists an $\Mbe$-triangle $G\>P\>G'\dashrightarrow$ in $\xi$ with $P\in\P$ and $G'\in\QGP$. Hence we have the following commutative diagram
		\[
			\xymatrix{
				K\ar@{=}[d] \ar[r] & G \ar[r]\ar[d] & M \ar[d] \ar@{-->}[r] &  \\
				K\ar[r] & P\ar[d] \ar[r] & L\ar[d] \ar@{-->}[r]& \\
				& G'\ar@{=}[r]\ar@{-->}[d]&G'\ar@{-->}[d] \\
				& & &}
		\]
		where all rows and columns are $\Mbe$-triangles in $\xi$. It is clear that $\pd{L}\leq \pd{K}+1\leq n$, and the third column in the above diagram is the required $\Mbe$-triangle.
	\end{proof}

	\begin{lem}[cf.\ {\citep[Proposition~5.4]{He}}]\label{lem4.4}
	Let $A\longrightarrow B\longrightarrow C\dashrightarrow$ be an
$\mathbb E$-triangle in $\xi$ with $C\in\mathcal{QGP}(\xi)$.
Then, for any integer $n\geq0$,
$\Qgpd{A}\leq n$ if and only if
$\Qgpd{B}\leq n$.
In particular, if $A$ and $B$ are nonzero, then
$\Qgpd{A}=\Qgpd{B}$.
	\end{lem}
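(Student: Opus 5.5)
Both implications go by induction on $n$, keeping $C\in\QGP$ fixed throughout. For $n=0$ the statement is Lemma~\ref{HH}(1). The induction steps use Proposition~\ref{Prop4.4}, together with the octahedral-type diagrams of Lemma~\ref{BH} (which stay inside $\xi$ by Lemma~\ref{Thm3.2}), to pass kernels between $A$ and $B$ while keeping control of their dimensions.

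\emph{($\Rightarrow$)} Suppose $\Qgpd{A}\le n$ with $n\ge1$. By Proposition~\ref{Prop4.4} there is an $\Mbe$-triangle $K\to G\to A\dashrightarrow$ in $\xi$ with $G\in\QGP$ and $\pd{K}\le n-1$; in particular $\Qgpd{K}\le n-1$, since $\P\subseteq\QGP$.
\begin{itemize}
\item Choose an $\Mbe$-triangle $K'\to P\to C\dashrightarrow$ in $\xi$ with $P\in\P$ and $K'\in\QGP$.
\item Since $P$ is $\xi$-projective, $P\to C$ lifts through $B\to C$.
\item Lemma~\ref{BH}(1), applied to $A\to B\to C$ and $K'\to P\to C$, gives an object $M$ with $\Mbe$-triangles $A\to M\to P\dashrightarrow$ and $K'\to M\to B\dashrightarrow$.
\item The first of these splits because $P\in\P$, so $M\cong A\oplus P$.
\end{itemize}
This reduces the problem to the triangle $K'\to A\oplus P\to B$. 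Next, form the direct sum of $K\to G\to A$ with the split triangle $0\to P\to P$; this gives a $\xi$-deflation $G\oplus P\to A\oplus P$ with kernel $K$. Compose it with $A\oplus P\to B$. The composite of two $\xi$-deflations is a $\xi$-deflation (\cite[Corollary~3.5]{JDP}), so we get a $\xi$-deflation $G\oplus P\to B$. Apply $(\mathrm{ET4})^{\mathrm{op}}$ to this composite. It yields an $\Mbe$-triangle $K\to W\to K'\dashrightarrow$ in $\xi$, where $W$ is the kernel of $G\oplus P\to B$. Since $K'\in\QGP$, the induction hypothesis applied to $K\to W\to K'$ gives $\Qgpd{W}\le n-1$. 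As $G\oplus P\in\QGP$ (Remark~\ref{KZB}), the triangle $W\to G\oplus P\to B$ shows $\Qgpd{B}\le n$.

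\emph{($\Leftarrow$)} Suppose $\Qgpd{B}\le n$ with $n\ge1$. By Proposition~\ref{Prop4.4} there is an $\Mbe$-triangle $K\to G\to B\dashrightarrow$ in $\xi$ with $G\in\QGP$ and $\pd{K}\le n-1$. Compose $G\to B$ with $B\to C$ to get a $\xi$-deflation $G\to C$, and apply $(\mathrm{ET4})^{\mathrm{op}}$. This gives an object $V$ with $\Mbe$-triangles $V\to G\to C\dashrightarrow$ and $K\to V\to A\dashrightarrow$ in $\xi$. Since $G,C\in\QGP$, the triangle $V\to G\to C$ and Lemma~\ref{HH}(1) give $V\in\QGP$. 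Then $K\to V\to A$ shows $\Qgpd{A}\le n$, because $\Qgpd{K}\le\pd{K}\le n-1$.

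\emph{Equality of dimensions.} The last claim follows by applying the equivalence for every $n$. The dimensions are both infinite or both finite, and in the finite case they coincide. Nonzeroness is needed only to avoid the value $-1$.

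The main obstacle is the ($\Rightarrow$) direction. There one must replace $B$ by a triangle whose cokernel lies in $\QGP$ and whose kernel is controlled; the kernel has to be squeezed between $K$ and $K'$ so that induction applies. Careful bookkeeping is needed to check that the diagrams from $(\mathrm{ET4})^{\mathrm{op}}$ and Lemma~\ref{BH} stay in $\xi$; Lemma~\ref{Thm3.2} and the composite-deflation fact cover this. If the splitting argument becomes awkward, an alternative is to cite \cite[Proposition~5.4]{He} directly, together with the Ext characterization of finite dimension.
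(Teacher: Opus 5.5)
Your proposal is correct and is essentially the paper's proof. The forward direction uses the same Lemma~\ref{BH}(1) diagram, where the split triangle gives $A\oplus P$, followed by the same $(\mathrm{ET4})^{\mathrm{op}}$ diagram producing $K\to W\to K'$ for the induction hypothesis; the converse uses the same $(\mathrm{ET4})^{\mathrm{op}}$ argument with Lemma~\ref{HH}(1). Two small differences: the paper takes the kernel straight from the inductive definition of $\Qgpd{-}$, so your detour through Proposition~\ref{Prop4.4} is harmless but unnecessary. Also, your induction hypothesis should range over all $\xi$-triangles whose third term lies in $\QGP$, not with $C$ fixed, since you apply it to $K\to W\to K'$.
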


 \begin{proof}
We proceed by induction on $n$. The case $n=0$ follows from
Lemma~\ref{HH}(1). Let $n>0$ and assume that the assertion
holds for $n-1$.

Suppose that $\xi\text{-}\operatorname{QGpd}(A)\leq n$.
There exist $\mathbb E$-triangles
$A_1\longrightarrow G_A\longrightarrow A\dashrightarrow$
and $C_1\longrightarrow P_C\longrightarrow C\dashrightarrow$
in $\xi$ with $G_A,C_1\in\mathcal{QGP}(\xi)$,
$P_C\in\mathcal P(\xi)$ and
$\Qgpd{A_1}\leq n-1$.
By Lemma \ref{BH}(1) and Lemma \ref{Thm3.2}, we obtain the following commutative diagram
		\[
			\xymatrix{
				&C_1\ar@{=}[r]\ar[d]&C_1\ar[d]\\
				A\ar[r]\ar@{=}[d]&D\ar[r]\ar[d]&P_C\ar[d]\ar@{-->}[r]&\\
				A\ar[r]&B\ar[r]\ar@{-->}[d]&C\ar@{-->}[r]\ar@{-->}[d]&\\
				&&&
			}
		\]
whose rows and columns are $\mathbb{E}$-triangles in $\xi$.
Since $P_C$ is $\xi$-projective, the $\mathbb{E}$-triangle $A\>D\>P_C\dashrightarrow$
 splits, and hence $D\simeq A\oplus P_C$. So the direct sum of
$A_1\longrightarrow G_A\longrightarrow A\dashrightarrow$
with the split $\mathbb{E}$-triangle
$0\longrightarrow P_C\xrightarrow{1_{P_C}}P_C\dashrightarrow$
is isomorphic to an $\mathbb E$-triangle
$A_1\longrightarrow G_A\oplus P_C\longrightarrow D\dashrightarrow$
in $\xi$. By Lemma \ref{Thm3.2} and $\rm(ET4)^{op}$, we have the following commutative diagram
		\[
			\xymatrix{
				A_1\ar@{=}[d] \ar[r] & B_1 \ar[r]\ar[d] & C_1 \ar[d] \ar@{-->}[r] &  \\
				A_1\ar[r] & G_A\oplus P_C\ar[d] \ar[r] & D\ar[d] \ar@{-->}[r]& \\
				& B\ar@{=}[r]\ar@{-->}[d]&B\ar@{-->}[d] \\
				& & &}
		\]
where all  rows and columns are  $\mathbb{E}$-triangles in $\xi$.  As $C_1\in\mathcal{QGP}(\xi)$, the induction
hypothesis implies
$\Qgpd{B_1}\leq n-1$.
It follows that $\Qgpd{B}\leq n$,
since $G_A\oplus P_C\in\mathcal{QGP}(\xi)$.

Conversely, suppose that
$\Qgpd{B}\leq n$.
There exists an $\mathbb E$-triangle
$B_1\longrightarrow G_B\longrightarrow B\dashrightarrow$
in $\xi$ with $G_B\in\mathcal{QGP}(\xi)$ and
$\Qgpd{B_1}\leq n-1$.
By Lemma~\ref{Thm3.2} and $(\mathrm{ET4})^{\mathrm{op}}$,
there is the following commutative diagram
		\[
			\xymatrix{
				B_1\ar@{=}[d] \ar[r] & H \ar[r]\ar[d] & A \ar[d] \ar@{-->}[r] &  \\
				B_1\ar[r] & G_B\ar[d] \ar[r] & B\ar[d] \ar@{-->}[r]& \\
				& C\ar@{=}[r]\ar@{-->}[d]&C\ar@{-->}[d] \\
				& & &}
		\]
where all  rows and columns are  $\mathbb{E}$-triangles in $\xi$. Since $G_B,C\in\mathcal{QGP}(\xi)$, we have
$H\in\mathcal{QGP}(\xi)$ by Lemma~\ref{HH}(1).
Hence $\Qgpd{A}\leq n$.
This completes the induction.

The last assertion follows from the equivalence, since
the quasi-$\xi$-$\mc{G}$projective dimension of a nonzero object
is either a nonnegative integer or infinite.
\end{proof}

\begin{cor}\label{cor:qgpd-projective-sum}
Let $M\in\Mcc$ and $P\in\P$. Then, for any integer $n\geq0$,
$\Qgpd{M}\leq n$ if and only if
$\Qgpd{M\oplus P}\leq n$.
In particular, if $M\neq0$, then
$\Qgpd{M\oplus P}=\Qgpd{M}$.
\end{cor}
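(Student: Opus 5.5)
The plan is to realize $M\oplus P$ as the middle term of a split $\mathbb E$-triangle whose third term is quasi-$\xi$-$\mathcal G$projective, and then apply Lemma~\ref{lem4.4}. Consider the split $\mathbb E$-triangle
\[
    M\xrightarrow{\begin{bmatrix}1\\0\end{bmatrix}}M\oplus P\xrightarrow{[0\ \ 1]}P\dashrightarrow .
\]
It lies in $\xi$, because a proper class contains all split $\mathbb E$-triangles. Its third term $P$ belongs to $\P$, and $\P\subseteq\QGP$ by Remark~\ref{KZB}. So the triangle satisfies the hypothesis of Lemma~\ref{lem4.4}, with $A=M$, $B=M\oplus P$ and $C=P$.

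Lemma~\ref{lem4.4} then gives, for every integer $n\geq0$, that $\Qgpd{M}\leq n$ if and only if $\Qgpd{M\oplus P}\leq n$. This is exactly the first assertion.

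For the final assertion, assume $M\neq0$. Then $M\oplus P\neq0$ as well, since $M$ is a direct summand of it. The "in particular" clause of Lemma~\ref{lem4.4} therefore yields $\Qgpd{M\oplus P}=\Qgpd{M}$. Equivalently, one can argue directly: both dimensions lie in $\{0,1,2,\dots\}\cup\{\infty\}$, and the equivalence holds for every $n\geq0$, so they coincide.

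The only point needing any care is why $M\neq0$ is required. If $M=0$ and $P\neq0$, then $\Qgpd{M}=-1$ while $\Qgpd{M\oplus P}=0$. The equivalence for $n\geq0$ still holds in that case, since both statements are true, but the dimensions differ. Apart from this, I expect no real obstacle: the corollary follows directly from Lemma~\ref{lem4.4}.
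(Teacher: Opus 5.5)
Your proposal is correct and follows the paper's proof: the paper likewise applies Lemma~\ref{lem4.4} to the split $\mathbb E$-triangle $M\to M\oplus P\to P\dashrightarrow$ in $\xi$, using $P\in\mathcal P(\xi)\subseteq\mathcal{QGP}(\xi)$. Your check that $M\oplus P\neq0$ is exactly what the ``in particular'' clause of that lemma requires.
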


\begin{proof}
Consider the split $\mathbb E$-triangle
$M\longrightarrow M\oplus P\longrightarrow P\dashrightarrow$
in $\xi$. Since $P\in\P\subseteq\QGP$, the assertion follows
from Lemma~\ref{lem4.4}.
\end{proof}

Let $\Mcx$ be a subcategory of $\Mcc$. We define
	\[
		\begin{aligned}
			& \mathcal{X}^{\perp}:=\left\{C \in \mathcal{C} \mid \ext^{n\geq 1}(X, C)=0 \text{ for all } X \in \mathcal{X}\right\} \\
			& ^{\perp} \mathcal{X}:=\left\{C \in \mathcal{C} \mid \ext^{n\geq 1}(C, X)=0 \text{ for all } X \in \mathcal{X} \right\}
		\end{aligned}
	\]
	For two subcategories $\Mcx$ and $\mathcal{Y}$ of $\Mcc$, we say $\Mcx\perp\mathcal{Y}$ if $\Mcx\subseteq{}^\perp\mathcal{Y}$ ( equivalently $\mathcal{Y}\subseteq \Mcx^\perp$ ).

	\begin{lem}\label{Lem4.6}
		$\P=\widehat{\mathcal{P}}(\xi)\bigcap{}^{\perp}\P$.
	\end{lem}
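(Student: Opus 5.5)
The inclusion $\P\subseteq\widehat{\mathcal{P}}(\xi)\cap{}^{\perp}\P$ is immediate. Any $P\in\P$ has $\pd{P}\leq 0$, so $P\in\widehat{\mathcal P}(\xi)$. Also, taking the trivial $\xi$-projective resolution $0\to P\xrightarrow{1}P\to 0$ in the definition of $\ext^n_\xi(P,-)$ via $\mathbf P\to A$ gives $\ext^n(P,X)=0$ for all $n\geq1$ and all $X$; in particular $P\in{}^{\perp}\P$.

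For the reverse inclusion, let $M\in\widehat{\mathcal{P}}(\xi)\cap{}^{\perp}\P$. I would argue by induction on $n=\pd{M}$ that $M\in\P$, the cases $n\leq0$ being trivial. Suppose $n\geq1$. Choose a finite $\xi$-projective resolution of length $n$ and its first $\xi$-resolution $\Mbe$-triangle $K\xrightarrow{x}P_0\xrightarrow{y}M\dashrightarrow$ in $\xi$, with $P_0\in\P$ and $\pd{K}\leq n-1$. We then aim to show that this triangle splits, which makes $M$ a direct summand of $P_0$ and hence $\xi$-projective. Splitting contradicts $\pd{M}=n\geq1$, so in fact the induction collapses and no induction hypothesis is needed.

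The key point is to exploit $\ext^{\geq1}(M,Q)=0$ for $Q\in\P$, even though $K$ itself need not be $\xi$-projective. I would use dimension shifting along the truncated resolution
\[
0\to P_n\to P_{n-1}\to\cdots\to P_0\to M\to 0 .
\]
Let $K_i$ denote the syzygies, so $K_1=K$ and $K_n=P_n$. The long exact sequences of Lemma~\ref{LZHL} in the first variable give
\[
\ext^1(K_{n-1},P_n)\cong\ext^2(K_{n-2},P_n)\cong\cdots\cong\ext^n(M,P_n)=0,
\]
since each $P_i$ has vanishing higher $\ext$. Now consider the last triangle $P_n\to P_{n-1}\to K_{n-1}\dashrightarrow$. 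Applying $\ext^0(-,P_n)$ to it yields a surjection $\ext^0(P_{n-1},P_n)\to\ext^0(P_n,P_n)$. By Lemma~\ref{LZHL}, since the first arguments are $\xi$-projective, this is the restriction map $\Mcc(P_{n-1},P_n)\to\Mcc(P_n,P_n)$. Hence $1_{P_n}$ factors through the inflation, so the triangle splits by Lemma~\ref{FJ}. Then $K_{n-1}$ is a direct summand of $P_{n-1}$ and lies in $\P$, because $\P$ is closed under direct summands. This gives a resolution of length $n-1$, contradicting $\pd{M}=n$. Equivalently, iterating this step gives $M\in\P$.

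The main obstacle is the identification of $\ext^0(P,X)$ with $\Mcc(P,X)$ for $P\in\P$, together with the naturality needed to compare the restriction map with the map in the long exact sequence. The last part of Lemma~\ref{LZHL} provides the isomorphism, and naturality in the first variable follows from the construction via resolutions. I would also check that ${}^\perp\P$ is inherited by the syzygies in the required degrees, but the dimension shift above only uses vanishing at $M$ together with the vanishing for projectives, so this should be fine.
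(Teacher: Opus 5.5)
Your argument is correct and is essentially the paper's: dimension-shift to get $\ext^1(K_{n-1},P_n)\cong\ext^n(M,P_n)=0$, split the last triangle $P_n\to P_{n-1}\to K_{n-1}\dashrightarrow$ via Lemma~\ref{LZHL} and Lemma~\ref{FJ}, and shorten the resolution. The paper first shows every $K_i\in{}^{\perp}\P$ and then iterates the splitting, whereas you use minimality of $n$ and so only need the vanishing at $M$. Note that your second paragraph announces splitting the first triangle $K\to P_0\to M\dashrightarrow$, but the triangle you actually split, correctly, is the last one.
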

	\begin{proof}
		Obviously, $\P\subseteq\widehat{\mathcal{P}}(\xi)\bigcap{}^{\perp}\P$. Conversely, let $M\in \widehat{\mathcal{P}}(\xi)\bigcap{}^{\perp}\P$, and assume that $\pd{M}\leq n$. If $n=0$, it is obvious.  Hence we can assume $n\geq1$. So there exists a $\xi$-exact complex
		\[
			0\>P_n\>P_{n-1}\>\cdots\>P_1\>P_0\>M\>0
		\]
		with all $P_i\in\P$ and the $\xi$-resolution $\Mbe$-triangles $K_{i+1}\>P_i\>K_i\dashrightarrow$ (set $K_0=M$ and $K_n=P_n$). For any $P\in\P$, by Lemma \ref{LZHL}, there is an exact sequence
		\[
			\cdots\>\ext^{k}(M,P)\>\ext^{k}(P_0,P)\>\ext^{k}(K_1,P)\>\ext^{k+1}(M,P)\>\cdots
		\]
		Note that $\ext^{k}(M,P)=\ext^{k+1}(M,P)=0$ for $k\geq 1$, since $M\in {^{\bot}}\mathcal{P}(\xi)$ and $\ext^{k}(P_0,P)=0$. It follows that $K_1\in{}^{\perp}\P$. Repeating this process, we can obtain $K_i\in{}^{\perp}\P$ for all $1\leq i\leq n$. Consider the following $\Mbe$-triangle
		\[
			P_n\>P_{n-1}\>K_{n-1}\dashrightarrow
		\]
		in $\xi$. By Lemma \ref{LZHL} again, there is an exact sequence
		\[
			\Mcc(P_{n-1},P_n)\> \Mcc(P_n,P_n)\> \ext^1(K_{n-1},P_n)=0
		\]
       Consequently, the $\Mbe$-triangle $P_n\>P_{n-1}\>K_{n-1}\dashrightarrow$ is split. It follows that $ P_n\oplus K_{n-1}\cong P_{n-1}$ and hence $K_{n-1}\in\P$. Repeating this process, we can obtain $K_i\in\P$ for any $i\leq n$. Hence $M\in\P$ and $\widehat{\mathcal{P}}(\xi)\bigcap{}^{\perp}\P\subseteq \P$. Thus, $\P=\widehat{\mathcal{P}}(\xi)\bigcap{}^{\perp}\P$.
	\end{proof}
	\begin{thm}\label{thm4.4}
		Let $M$ be an object in $\widehat{\mathcal{QGP}}(\xi)$. Then the following statements are equivalent for any $n\geq 0$.

		(1) $\Qgpd{M}\leq n$.

		(2) For any $\xi$-exact complex
\[
    0\>K_n\>G_{n-1}\>\cdots\>G_0\>M\>0,
\]
if all $G_i\in\QGP$, then so is $K_n$.

		(3) For each $0\leq i\leq n$, there exists a $\xi$-exact complex
		\[
			0\>G_n\>G_{n-1}\>\cdots\> G_1\> G_0\>M\>0
		\]
		such that $G_i\in\QGP$ and $G_j\in\P$ for all $j\neq i$.

		(4) $\ext^{n+i}(M,P)=0$, $\forall P\in\P$, $\forall i\geq 1$.

		(5) $\ext^{n+i}(M,L)=0$, $\forall L\in\widehat{\mathcal{P}}(\xi)$, $\forall i\geq 1$.

		(6) $\ext^{n+i}(M,Q)=0$, $\forall Q\in\QP$, $\forall i\geq 1$.

		(7) $\ext^{n+i}(M,N)=0$, $\forall N\in\widehat{\mathcal{QP}}(\xi)$, $\forall i\geq 1$.\\
For $n=0$, conditions (2) and (3) reduce to $M\in\QGP$. Moreover, if $M\neq0$, then
\[
\begin{aligned}
\Qgpd{M}
&=\sup\Bigl(
\{0\}\cup \left\{m\in\mathbb{N}^+ \mid \exists P\in\P \text{\, such that \,} \ext^m(M,P)\neq 0 \right\}\Bigr)\\
				&=\sup\Bigl(\{0\}\cup \left\{m\in\mathbb{N}^+ \mid \exists L\in\widehat{\mathcal{P}}(\xi) \text{\, such that \,} \ext^m(M,L)\neq 0 \right\}\Bigr)\\
				&=\sup\Bigl(\{0\}\cup \left\{m\in\mathbb{N}^+ \mid \exists Q\in\QP \text{\, such that \,} \ext^m(M,Q)\neq 0 \right\}\Bigr)\\
				&=\sup\Bigl(\{0\}\cup \left\{m\in\mathbb{N}^+ \mid \exists N\in\widehat{\mathcal{QP}}(\xi) \text{\, such that \,} \ext^m(M,N)\neq 0 \right\}\Bigr).
\end{aligned}
\]
	\end{thm}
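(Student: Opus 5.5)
The plan is to prove the cycle (1)$\Rightarrow$(7)$\Rightarrow$(6)$\Rightarrow$(4)$\Rightarrow$(1), with (7)$\Rightarrow$(5)$\Rightarrow$(4) as a side branch. The conditions (2) and (3) will be attached to (1) through the results of \cite{He} and Proposition~\ref{Prop4.4}.

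For (1)$\Rightarrow$(7), take a $\xi$-exact complex $0\to G_n\to\cdots\to G_0\to M\to0$ with all $G_i\in\QGP$, together with its resolution $\mathbb E$-triangles. By dimension shifting with Lemma~\ref{LZHL}, we get $\ext^{n+i}(M,N)\cong\ext^{i}(G_n,N)$ for $i\geq1$. This vanishes by Proposition~\ref{TTTT}(1) whenever $N\in\widehat{\mathcal{QP}}(\xi)$. The implications (7)$\Rightarrow$(6)$\Rightarrow$(4) and (7)$\Rightarrow$(5)$\Rightarrow$(4) are trivial, since $\P\subseteq\QP\subseteq\widehat{\mathcal{QP}}(\xi)$ and $\P\subseteq\widehat{\mathcal P}(\xi)\subseteq\widehat{\mathcal{QP}}(\xi)$.

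The main step is (4)$\Rightarrow$(1). Let $m=\Qgpd{M}<\infty$ and suppose $m>n$. By Proposition~\ref{Prop4.4} there is a $\xi$-exact complex $0\to P_m\to\cdots\to P_1\to G\to M\to0$ with $G\in\QGP$ and $P_i\in\P$. Let $K$ be its $n$-th syzygy, so there is a $\xi$-exact complex $0\to P_m\to\cdots\to P_{n+1}\to K\to 0$ with $\pd{K}<\infty$ (for $n\ge1$; the case $n=0$ is handled in the next paragraph). Dimension shifting gives $\ext^{i}(K,P)\cong\ext^{n+i}(M,P)=0$ for all $P\in\P$ and $i\geq1$. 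Here the terms $P_j$ and $G$ contribute nothing, because $\ext^{\ge1}(G,P)=0$ by Proposition~\ref{TTTT}(1). Hence $K\in\widehat{\mathcal P}(\xi)\cap{}^\perp\P=\P$ by Lemma~\ref{Lem4.6}. Truncating the resolution at $K$ then shows $\Qgpd{M}\leq n$.

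The case $n=0$ is where I expect the main obstacle. Here $K$ is replaced by the first syzygy $K_1$, which lies in the triangle $K_1\to G\to M\dashrightarrow$ and has $\pd{K_1}\le m-1$. The condition $\ext^{\ge1}(M,P)=0$ gives $\ext^{\ge2}(K_1,P)=0$, but only the exactness of $\Mcc(G,P)\to\Mcc(K_1,P)\to\ext^1(M,P)=0$ in degree one. So I would argue with Lemma~\ref{Lem4.6}-style splitting directly. First reduce to $\pd{K_1}\le0$ by the same syzygy argument. Then apply Proposition~\ref{Prop3.7} to the triangle $K_1\to G\to M$, with $K_1\in\P$ and $G\in\QGP$, using $\ext^1(M,P)=0$ to conclude $M\in\QGP$. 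This settles the case.

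For (1)$\Leftrightarrow$(2), assume (1) and take a complex as in (2). Comparing its syzygies with those of Proposition~\ref{Prop4.4}, Lemma~\ref{lem4.4} yields $\Qgpd{K_n}\leq0$; alternatively, (7) and dimension shifting give $\ext^{\ge1}(K_n,P)=0$, and one can repeat the argument above. Conversely, (2)$\Rightarrow$(1) holds by taking any $\QGP$-resolution of length $\geq n$, which exists since $M\in\widehat{\mathcal{QGP}}(\xi)$ by Proposition~\ref{prop:qgp-resolution-replacement}. For (1)$\Rightarrow$(3), use Proposition~\ref{Prop4.4} for $i=0$. For general $i$, move the $\QGP$ term up the resolution by repeatedly replacing $G$ via the triangles of Lemma~\ref{GFC}, or via \cite[Theorem~5.3]{He}. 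Conversely, (3)$\Rightarrow$(1) is immediate from the definition, since $\P\subseteq\QGP$.

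Finally, the four supremum formulas follow from the equivalences (1)$\Leftrightarrow$(4), (5), (6), (7), applied for each $n$, because $\Qgpd{M}$ is finite.
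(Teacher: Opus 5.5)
Your plan follows the paper's proof. It uses the same cycle $(1)\Rightarrow(7)\Rightarrow(6),(5)\Rightarrow(4)\Rightarrow(1)$. The step $(4)\Rightarrow(1)$ goes by dimension shifting to a syzygy of finite $\xi$-projective dimension, then Lemma~\ref{Lem4.6}, then Proposition~\ref{Prop3.7} when $n=0$. Condition (3) is tied to (1) through \cite[Theorem~5.3]{He}.

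The one real slip is in your case $n=0$: the obstacle you anticipate does not exist. In the triangle $K_1\to G\to M\dashrightarrow$ you have $\ext^{k}(G,P)=\ext^{k+1}(G,P)=0$ by Proposition~\ref{TTTT}(1). Lemma~\ref{LZHL} then gives $\ext^{k}(K_1,P)\cong\ext^{k+1}(M,P)$ for every $k\geq1$. So already $\ext^{1}(K_1,P)\cong\ext^{2}(M,P)=0$, and $K_1\in\widehat{\mathcal P}(\xi)\cap{}^{\perp}\P=\P$ at once. As you wrote it, you claim only $\ext^{\geq2}(K_1,P)=0$. From that, ``the same syzygy argument'' yields only $\pd{K_1}\leq1$, not $K_1\in\P$, so your reduction does not follow from what you stated. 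With the correct shift it is exactly the paper's argument, and Proposition~\ref{Prop3.7} finishes. Your own formula $\ext^{i}(K,P)\cong\ext^{n+i}(M,P)$ from the case $n\geq1$ already gives this. Also, the $n$-th syzygy $K$ is resolved by $0\to P_m\to\cdots\to P_n\to K\to0$: it is $P_n$, not $P_{n+1}$, that maps onto $K$.

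Three smaller points.
\begin{itemize}
\item In $(1)\Rightarrow(2)$, your alternative via $\ext^{\geq1}(K_n,P)=0$ presupposes $K_n\in\widehat{\mathcal{QGP}}(\xi)$, which you need in order to invoke Proposition~\ref{Prop4.4}. That finiteness is exactly what the comparison of syzygies provides, so this route does not avoid the comparison.
\item The comparison itself can be made precise a bit more directly than in the paper. Apply Lemma~\ref{BH}(1) to $K_1\to G_0\to M\dashrightarrow$ and to the triangle $K\to G\to M\dashrightarrow$ of Proposition~\ref{Prop4.4}. This gives triangles $K_1\to X\to G\dashrightarrow$ and $K\to X\to G_0\dashrightarrow$ in $\xi$. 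Since $\Qgpd{K}\leq\pd{K}\leq n-1$, two applications of Lemma~\ref{lem4.4} give $\Qgpd{K_1}\leq n-1$, and you then induct. The paper instead uses a $\xi$-projective deflation onto $M$ and Corollary~\ref{cor:qgpd-projective-sum}.
\item The step $(2)\Rightarrow(1)$ needs neither $M\in\widehat{\mathcal{QGP}}(\xi)$ nor Proposition~\ref{prop:qgp-resolution-replacement}. A truncated $\xi$-projective resolution of $M$, which exists because $\Mcc$ has enough $\xi$-projectives, already has the required form.
\end{itemize}
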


	\begin{proof}

For $n=0$, $(1)\Rightarrow(7)$ follows from
Proposition~\ref{TTTT}(1), while
$(7)\Rightarrow(5)\Rightarrow(4)$ and
$(7)\Rightarrow(6)\Rightarrow(4)$ are immediate.
The argument for $(4)\Rightarrow(1)$ below also covers $n=0$.
Thus we may assume $n\geq1$ in proving
$(1)\Rightarrow(2)$, $(2)\Rightarrow(1)$, and
$(1)\Rightarrow(7)$.

	(1) $\Rightarrow$ (2) By assumption, there exists an $\Mbe$-triangle
	$K\stackrel{x}\>G\stackrel{y}\>M\stackrel{\delta}\dashrightarrow$
	in $\xi$ with $G\in\QGP$ and $\Qgpd{K}\leq n-1$.
	Since $\mathcal{C}$ has enough $\xi$-projectives, there exists an
	$\Mbe$-triangle
	$L\stackrel{x^{\prime}}\>P\stackrel{y^{\prime}}\>M
	\stackrel{\delta^{\prime}}\dashrightarrow$
	in $\xi$ with $P\in\P$. Then there is a morphism $g:P\to G$
	such that $y^{\prime}=yg$. It follows from
	\cite[Lemma 3.6(1)]{JDP} and Lemma~\ref{Thm3.2} that there is
	a morphism $f$ such that the following diagram commutes
	\[
		\xymatrix{
			L \ar@{-->}[d]^{f} \ar[r]^{x'} &
			P \ar[d]^{g} \ar[r]^{y'} &
			M\ar@{=}[d]\ar@{-->}[r]^{\delta'}& \\
			K\ar[r]^{x} &
			G\ar[r]^{y} &
			M\ar@{-->}[r]^{\delta}&}
	\]
	and $L\>K\oplus P\>G\dashrightarrow$ is an
	$\mathbb E$-triangle in $\xi$. Note that $G\in\QGP$, so we have 
$\Qgpd{L}\leq n-1$ by Lemma~\ref{lem4.4}
and Corollary~\ref{cor:qgpd-projective-sum}.

Consider the $\Mbe$-triangle
$K_1\>G_0\>M\dashrightarrow$ in $\xi$. By the same argument,
there exists an $\mathbb E$-triangle
$L\>K_1\oplus P\>G_0\dashrightarrow$
in $\xi$. Hence Lemma~\ref{lem4.4} and
Corollary~\ref{cor:qgpd-projective-sum} yield
$\Qgpd{K_1}\leq n-1$.
By induction, we have $\Qgpd{K_n}\leq0$, and hence
$K_n\in\QGP$.

		(2) $\Rightarrow$ (1) Since $\Mcc$ has enough $\xi$-projectives, there exists a $\xi$-exact complex
		\[
			0\>K_n\>P_{n-1}\>\cdots \> P_1\> P_0\>M\>0
		\]
		with all $P_i\in\P$. Hence $K_n$ lies in $\QGP$ by hypothesis, which implies that $\Qgpd{M}\leq n$.

		(1) $\Rightarrow$ (7) Since $\Qgpd{M}\leq n$, there exists a $\xi$-exact complex
		\[
			0\>G_n\>G_{n-1}\>\cdots\>G_1\>G_0\>M\>0
		\]
		with all $G_i\in\QGP$ and the $\xi$-resolution $\mathbb{E}$-triangle $K_{i+1}\>G_i\>K_i\dashrightarrow$ (set $K_0=M$ and $K_n=G_n$) for any  $0\leq i\leq n-1$. For any $N\in\widehat{\mathcal{QP}}(\xi)$ and $k\geq 1$, by Lemma \ref{LZHL}, there is an exact sequence
		\[
			\cdots\>\ext^{k}(G_i,N)\>\ext^{k}(K_{i+1},N)\>\ext^{k+1}(K_i,N)\>\ext^{k+1}(G_i,N)\>\cdots
		\]
		Note that $\ext^{k\geq 1}(G_i,N)=0$ by Proposition \ref{TTTT}(1). Thus $\ext^{k+1}(K_i,N)\cong\ext^{k}(K_{i+1},N)$ for any $k\geq 1$. It follows that $\ext^{n+i}(M,N)\cong \ext^{i}(G_n,N)=0$ for any $i\geq 1$.

		(7) $\Rightarrow$ (5) $\Rightarrow$ (4) and (7) $\Rightarrow$ (6) $\Rightarrow$ (4) are obvious.

		(4) $\Rightarrow$ (1)  Since $M$ lies in $\widehat{\mathcal{QGP}}(\xi)$, it follows from Proposition \ref{Prop4.4} that there exists an $\Mbe$-triangle $K\>G\>M\dashrightarrow$ in $\xi$ with $G\in\QGP$ and $\pd{K}<\infty$. For any $P\in\P$ and $k\geq 1$, by Lemma \ref{LZHL}, there is an exact sequence
		\[
			\cdots\>\ext^{k}(G,P)\>\ext^{k}(K,P)\>\ext^{k+1}(M,P)\>\ext^{k+1}(G,P)\>\cdots
		\]
		Note that $\ext^{k\geq 1}(G,P)=0$ by Proposition \ref{TTTT}(1). So $\ext^{n+i}(K,P)\cong\ext^{n+i+1}(M,P)=0$ for any $i\geq 0 $ with $n+i\geq 1$.  
		
		If $n=0$, then we have $K\in\P$ by Lemma \ref{Lem4.6}. Since $K,G\in\QGP$ and $\ext^{1}(M,P)=0$ for any $P\in\P$, $M$ is in $\QGP$ by  Proposition \ref{Prop3.7}. This proves the case $n=0$.

		Now assume $n\geq 1$ and let  $\pd{K}=m$. We only need to consider the case $m\geq n$. Then there exists a $\xi$-exact complex
		\[
			0\>P_m\>P_{m-1}\>\cdots\>P_1\>P_0\>K\>0
		\]
		with all $P_i\in\P$ and the $\xi$-resolution $\Mbe$-triangles $K_{i+1}\>P_i\>K_i\dashrightarrow$ (set $K_0=K$ and $K_m=P_m$). By Lemma \ref{LZHL}, for all $k\geq 1$ and any $P\in \P$, there is an exact sequence
		\[
			\cdots\> \ext^k(P_i,P)\>\ext^k(K_{i+1},P)\>\ext^{k+1}(K_i,P)\> \ext^{k+1}(P_i,P) \>\cdots
		\]
		Since $\ext^k(P_i,P)=\ext^{k+1}(P_i,P)=0$, we have $\ext^k(K_{i+1},P)\cong\ext^{k+1}(K_i,P)$. Thus, we obtain  $\ext^i(K_{n-1},P)\cong\ext^{i+n-1}(K,P)=0$ for any $i\geq 1$, which means $K_{n-1}\in{}^{\perp} \P$. Note that $K_{n-1}\in\widehat{\mathcal{P}}(\xi)$, so $K_{n-1}$ is in $\P$ by Lemma \ref{Lem4.6}. Therefore, $\Qgpd{K}\leq n-1$ and $\Qgpd{M}\leq n$.

		(1) $\Leftrightarrow$ (3) $\Leftrightarrow$ (6) It follows from \cite[Theorem 5.3]{He}.

        For the last assertion, put $d=\Qgpd{M}$.
By $(1)\Rightarrow(7)$,
$\ext^{m}(M,N)=0$ for every
$N\in\widehat{\mathcal{QP}}(\xi)$ and every $m>d$.
If $d=0$, the assertion follows immediately.

Let $d>0$. If $\ext^{d}(M,P)=0$ for every $P\in\P$,
then condition~(4) holds with $n=d-1$. Hence
$\Qgpd{M}\leq d-1$, a contradiction. Therefore,
$\ext^{d}(M,P)\neq0$ for some $P\in\P$.
Since $\P$ is contained in
$\widehat{\mathcal P}(\xi)$, $\QP$, and
$\widehat{\mathcal{QP}}(\xi)$, the four suprema are equal
to $d$.
	\end{proof}

Since $\P\subseteq\QGP\subseteq\GP$, one has
$\xi\text{-}\mc{G}\text{pd}(M)\leq\Qgpd{M}\leq\pd{M}$
for every $M\in\Mcc$. If
$M\in\widehat{\mathcal P}(\xi)$, then
$\xi\text{-}\mc{G}\text{pd}(M)=\pd{M}$ by
\cite[Proposition~5.4]{JDP}. Hence
$\Qgpd{M}=\pd{M}$.
The following result is a direct consequence of
Theorem~\ref{thm4.4}.	
	
	\begin{cor}
		If $0\neq M$ has finite $\xi$-projective dimension, then
		\[
			\begin{split}
				\pd{M}&=\sup\Bigl(
\{0\}\cup \left\{m\in\mathbb{N}^+ \mid \exists P\in\P \text{\, such that \,} \ext^m(M,P)\neq 0 \right\}\Bigr)\\
				&=\sup\Bigl(\{0\}\cup \left\{m\in\mathbb{N}^+ \mid \exists L\in\widehat{\mathcal{P}}(\xi) \text{\, such that \,} \ext^m(M,L)\neq 0 \right\}\Bigr)\\
				&=\sup\Bigl(\{0\}\cup \left\{m\in\mathbb{N}^+ \mid \exists Q\in\QP \text{\, such that \,} \ext^m(M,Q)\neq 0 \right\}\Bigr)\\
				&=\sup\Bigl(\{0\}\cup \left\{m\in\mathbb{N}^+ \mid \exists N\in\widehat{\mathcal{QP}}(\xi) \text{\, such that \,} \ext^m(M,N)\neq 0 \right\}\Bigr).
			\end{split}
		\]
	\end{cor}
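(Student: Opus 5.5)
The plan is to deduce the corollary from the last assertion of Theorem~\ref{thm4.4}, using the remark just before the statement to identify the quasi-$\xi$-$\mathcal G$projective dimension with the $\xi$-projective dimension.

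First, since $M\in\widehat{\mathcal P}(\xi)$, a finite $\xi$-projective resolution of $M$ is a finite $\xi$-exact complex whose terms lie in $\P\subseteq\QGP$. Hence $\Qgpd{M}\leq\pd{M}<\infty$, so $M\in\widehat{\mathcal{QGP}}(\xi)$ and Theorem~\ref{thm4.4} applies to $M$.

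Second, by the remark preceding the statement we have $\xi\text{-}\mc{G}\text{pd}(M)\leq\Qgpd{M}\leq\pd{M}$. Combined with $\xi\text{-}\mc{G}\text{pd}(M)=\pd{M}$ from \cite[Proposition~5.4]{JDP}, this gives $\Qgpd{M}=\pd{M}$.

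Third, since $M\neq0$, the displayed formula at the end of Theorem~\ref{thm4.4} expresses $\Qgpd{M}$ as each of the four suprema. Substituting $\Qgpd{M}=\pd{M}$ yields exactly the four equalities claimed.

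The only point needing care is the identification $\Qgpd{M}=\pd{M}$, which rests on the cited result from \cite{JDP}. If one prefers to avoid that citation, there is a self-contained alternative. Put $d=\pd{M}$ and suppose $\Qgpd{M}\leq n$ with $n<d$. By condition (4) of Theorem~\ref{thm4.4}, $\ext^{n+i}(M,P)=0$ for all $P\in\P$ and $i\geq1$. Now take a $\xi$-projective resolution of length $d$ with syzygies $K_j$. Dimension shifting, exactly as in the proof of $(4)\Rightarrow(1)$ of Theorem~\ref{thm4.4}, gives $K_n\in{}^\perp\P$, and $K_n$ lies in $\widehat{\mathcal P}(\xi)$. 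Lemma~\ref{Lem4.6} then gives $K_n\in\P$, so $\pd{M}\leq n<d$, a contradiction. Hence $\Qgpd{M}\geq\pd{M}$, and the reverse inequality was shown in the first step.
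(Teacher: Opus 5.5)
Your proposal is correct and follows the paper's route. The paper likewise obtains $\Qgpd{M}=\pd{M}$ from the sandwich $\xi\text{-}\mathcal{G}\text{pd}(M)\leq\Qgpd{M}\leq\pd{M}$ together with \cite[Proposition~5.4]{JDP}, and then reads the four equalities off the last assertion of Theorem~\ref{thm4.4}. Your self-contained alternative is also valid and removes the dependence on the external citation: it uses $(1)\Rightarrow(4)$ of Theorem~\ref{thm4.4}, dimension shifting to $K_n$, and Lemma~\ref{Lem4.6}.
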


	\begin{cor}\label{LYF}
		The following equalities hold.

		(1) $\P=\QGP \bigcap \widehat{\mathcal{P}}(\xi)$.

		(2) $\QGP=\widehat{\mc{QGP}}(\xi)\bigcap{} ^{\perp}\widehat{\mathcal{P}}(\xi)=\widehat{\mc{QGP}}(\xi)\bigcap{}^{\perp}\P$.

		(3) $\QGP=\widehat{\mc{QGP}}(\xi)\bigcap{}^{\perp}\widehat{\mathcal{QP}}(\xi)=\widehat{\mc{QGP}}(\xi)\bigcap{}^{\perp}\QP$.
	\end{cor}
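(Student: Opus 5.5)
For (1), I will first note that $\P\subseteq\QGP$ (Remark~\ref{KZB}) and $\P\subseteq\widehat{\mathcal P}(\xi)$, which gives one inclusion. For the reverse, let $M\in\QGP\cap\widehat{\mathcal P}(\xi)$. Proposition~\ref{TTTT}(1), applied with $P\in\P\subseteq\widehat{\mc{QP}}(\xi)$, gives $\ext^n(M,P)=0$ for all $n\geq1$ and all $P\in\P$. So $M\in\widehat{\mathcal P}(\xi)\cap{}^{\perp}\P$, and Lemma~\ref{Lem4.6} yields $M\in\P$. Equivalently, one can use the remark preceding the first corollary: $\Qgpd{M}=\pd{M}$ for $M\in\widehat{\mathcal P}(\xi)$, and $\Qgpd{M}=0$.

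For (2) and (3), I will treat the four descriptions together. Two elementary containments come first. Since $\P\subseteq\widehat{\mathcal P}(\xi)\subseteq\widehat{\mathcal{QP}}(\xi)$ and $\P\subseteq\QP\subseteq\widehat{\mathcal{QP}}(\xi)$, the perps reverse the inclusions:
\[
{}^{\perp}\widehat{\mathcal{QP}}(\xi)\subseteq{}^{\perp}\widehat{\mathcal P}(\xi)\subseteq{}^{\perp}\P,
\qquad
{}^{\perp}\widehat{\mathcal{QP}}(\xi)\subseteq{}^{\perp}\QP\subseteq{}^{\perp}\P .
\]
Intersecting with $\widehat{\mc{QGP}}(\xi)$, it then suffices to prove two things. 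First, $\QGP\subseteq\widehat{\mc{QGP}}(\xi)\cap{}^{\perp}\widehat{\mathcal{QP}}(\xi)$. Second, $\widehat{\mc{QGP}}(\xi)\cap{}^{\perp}\P\subseteq\QGP$. Together these force all four intersections to equal $\QGP$.

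The first inclusion is immediate. Any $G\in\QGP$ has $\Qgpd{G}\leq0$, so it lies in $\widehat{\mc{QGP}}(\xi)$. Proposition~\ref{TTTT}(1) gives $\ext^n(G,N)=0$ for every $N\in\widehat{\mathcal{QP}}(\xi)$ and every $n\geq1$. For the second inclusion, take $M\in\widehat{\mc{QGP}}(\xi)$ with $\ext^{i}(M,P)=0$ for all $P\in\P$ and $i\geq1$. This is exactly condition (4) of Theorem~\ref{thm4.4} with $n=0$. The implication (4)$\Rightarrow$(1) there, which the proof explicitly covers for $n=0$, gives $\Qgpd{M}\leq0$, that is, $M\in\QGP$ (the case $M=0$ is trivial).

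I do not expect a genuine obstacle. The only point needing care is that Theorem~\ref{thm4.4} really applies at $n=0$: its hypothesis $M\in\widehat{\mathcal{QGP}}(\xi)$ is exactly what the intersection supplies, and without it the Ext-vanishing alone would not suffice.
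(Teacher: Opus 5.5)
Your proposal is correct and matches the paper, which gives no separate proof but treats this corollary as an immediate consequence of Theorem~\ref{thm4.4} at $n=0$ (the equivalences (1)$\Leftrightarrow$(4)--(7)), together with the preceding remark that $\Qgpd{M}=\pd{M}$ for $M\in\widehat{\mathcal P}(\xi)$. Your main argument for (1), via Proposition~\ref{TTTT}(1) and Lemma~\ref{Lem4.6}, is a harmless variant that stays within the paper rather than using the external comparison of $\xi$-$\mathcal{G}$pd with $\xi$-pd from \cite{JDP}; your sandwich argument for (2)--(3) is exactly the $n=0$ case of that theorem.
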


	Dually, we have the following results.

	\begin{thm}\label{thm4.8}
		Let $M$ be an object in $\widehat{\mathcal{QGI}}(\xi)$. Then the following statements are equivalent for any  $n\geq 0$.

		(1) $\Qgid{M}\leq n$.

		(2) For any $\xi$-exact complex
		\[
			0\>M\>G_0\>G_{-1}\>\cdots\>G_{-(n-1)}\>K_{-n}\>0,
		\]
		if all $G_i\in\QGI$, then so is $K_{-n}$.

		(3) For each $0\leq i\leq n$, there exists a $\xi$-exact complex
		\[
			0\>M\>G_{0}\>G_{-1}\>\cdots\> G_{-(n-1)}\>G_{-n}\>0
		\]
		such that $G_{-i}\in\QGI$ and $G_{-j}\in\mathcal{I}(\xi)$ for all $j\neq i$.

		(4) $\ext^{n+i}(I,M)=0$, $\forall I\in\mathcal{I}(\xi)$, $\forall i\geq 1$.

		(5) $\ext^{n+i}(L,M)=0$, $\forall L\in\widehat{\mathcal{I}}(\xi)$, $\forall i\geq 1$.

		(6) $\ext^{n+i}(Q,M)=0$, $\forall Q\in\QI$, $\forall i\geq 1$. 

		(7) $\ext^{n+i}(N,M)=0$, $\forall N\in\widehat{\mathcal{QI}}(\xi)$, $\forall i\geq 1$.\\
For $n=0$, conditions (2) and (3) reduce to $M\in\QGI$.

		Moreover, for any $M\neq 0$, we have
		\[
			\begin{split}
				\Qgid{M}&=\sup\Bigl(
\{0\}\cup \left\{m\in\mathbb{N}^+ \mid \exists I\in\mathcal{I}(\xi) \text{\, such that \,} \ext^m(I,M)\neq 0 \right\}\Bigr)\\
				&=\sup\Bigl(
\{0\}\cup \left\{m\in\mathbb{N}^+ \mid \exists L\in\widehat{\mathcal{I}}(\xi) \text{\, such that \,} \ext^m(L,M)\neq 0 \right\}\Bigr)\\
				&=\sup\Bigl(
\{0\}\cup \left\{m\in\mathbb{N}^+ \mid \exists Q\in\QI \text{\, such that \,} \ext^m(Q,M)\neq 0 \right\}\Bigr)\\
				&=\sup\Bigl(
\{0\}\cup \left\{m\in\mathbb{N}^+ \mid \exists N\in\widehat{\mathcal{QI}}(\xi) \text{\, such that \,} \ext^m(N,M)\neq 0 \right\}\Bigr).
			\end{split}
		\]
	\end{thm}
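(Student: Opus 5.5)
The plan is to dualize the proof of Theorem~\ref{thm4.4} step by step, using the duals of the auxiliary results established for the projective side. First I would record the needed dual statements. Proposition~\ref{TTTT}(2) is already available. The dual of Proposition~\ref{Prop3.7} says: for an $\Mbe$-triangle $A\to B\to C\dashrightarrow$ in $\xi$ with $B,C\in\QGI$, we have $A\in\QGI$ iff $\ext^1(I,A)=0$ for all $I\in\mathcal I(\xi)$. The dual of Proposition~\ref{Prop4.4} says: $\Qgid{M}\le n$ gives an $\Mbe$-triangle $M\to G\to K\dashrightarrow$ in $\xi$ with $G\in\QGI$ and $\id{K}\le n-1$; this uses the dual of \cite[Theorem 5.3]{He}. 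I also need the duals of Lemma~\ref{lem4.4}, Corollary~\ref{cor:qgpd-projective-sum} and Lemma~\ref{Lem4.6}, namely $\mathcal I(\xi)=\widehat{\mathcal I}(\xi)\cap\mathcal I(\xi)^{\perp}$. Each is proved by reversing arrows, replacing Lemma~\ref{BH}(1) with Lemma~\ref{BH}(2), $(\mathrm{ET4})^{\mathrm{op}}$ with (ET4), and $\Mcc(-,\QP)$-exactness with $\Mcc(\QI,-)$-exactness. The second long exact sequence of Lemma~\ref{LZHL} is replaced by the first.

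With these in hand, the implications go as follows.

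(1)$\Rightarrow$(2): Given $M\to G\to K\dashrightarrow$ with $\Qgid{K}\le n-1$ and a $\xi$-injective coresolution triangle $M\to I\to L\dashrightarrow$, the dual of \cite[Lemma 3.6(1)]{JDP} gives $G\to K\oplus I\to L\dashrightarrow$ in $\xi$. Hence $\Qgid{L}\le n-1$. Comparing in the same way with the first triangle $M\to G_0\to K_{-1}\dashrightarrow$ gives $\Qgid{K_{-1}}\le n-1$, and induction finishes this step.

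(2)$\Rightarrow$(1): Apply (2) to a $\xi$-injective coresolution.

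(1)$\Rightarrow$(7): Use dimension shifting along $0\to M\to G_0\to\cdots\to G_{-n}\to 0$ together with Proposition~\ref{TTTT}(2). This yields $\ext^{n+i}(N,M)\cong\ext^i(N,G_{-n})=0$.

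(7)$\Rightarrow$(5)$\Rightarrow$(4) and (7)$\Rightarrow$(6)$\Rightarrow$(4) are trivial.

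(1)$\Leftrightarrow$(3)$\Leftrightarrow$(6) is the dual of \cite[Theorem 5.3]{He}.

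The key step is (4)$\Rightarrow$(1). Take $M\to G\to K\dashrightarrow$ with $G\in\QGI$ and $\id{K}<\infty$. The first sequence of Lemma~\ref{LZHL} with $\ext^{\ge1}(I,G)=0$ gives $\ext^{n+i}(I,K)\cong\ext^{n+i+1}(I,M)=0$ for $n+i\ge1$.

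If $n=0$, the dual of Lemma~\ref{Lem4.6} gives $K\in\mathcal I(\xi)\subseteq\QGI$. The dual of Proposition~\ref{Prop3.7} together with $\ext^1(I,M)=0$ then gives $M\in\QGI$.

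If $n\ge1$ and $\id{K}=m\ge n$, shift along a finite $\xi$-injective coresolution of $K$. The $(n-1)$-st cosyzygy $K^{n-1}$ lies in $\widehat{\mathcal I}(\xi)\cap\mathcal I(\xi)^{\perp}=\mathcal I(\xi)$, so $\Qgid{K}\le\id{K}\le n-1$ and therefore $\Qgid{M}\le n$.

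The supremum formulas follow as in Theorem~\ref{thm4.4}. Put $d=\Qgid{M}$. By (1)$\Rightarrow$(7), all the listed Ext groups vanish in degrees above $d$. If $d>0$ and $\ext^d(I,M)=0$ for all $I\in\mathcal I(\xi)$, then (4) holds with $n=d-1$, contradicting minimality. Since $\mathcal I(\xi)$ is contained in all four coefficient classes, the four suprema equal $d$.

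I expect the main obstacle to be the verification of the dual of Proposition~\ref{Prop3.7}. There one must produce, via Lemma~\ref{BH}(1) and a triangle $K'\to I\to C$ from $C\in\QGI$, an $\Mbe$-triangle $A\to G\to I\dashrightarrow$ with $G\in\QGI$. Splitting it needs $\Mcc(I,G)\cong\ext^0(I,G)$, which is supplied by Proposition~\ref{TTTT}(2), and the closure of $\QGI$ under direct summands, which is the dual of \cite[Theorem 4.8]{He}. Once this is in place, the rest is a formal dualization.
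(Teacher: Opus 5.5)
Your proposal is correct and follows the paper's route. The paper gives no separate argument for Theorem~\ref{thm4.8} beyond declaring it the dual of Theorem~\ref{thm4.4}, and your step-by-step dualization is exactly what that duality amounts to: the dual of \cite[Theorem 5.3]{He} for (1)$\Leftrightarrow$(3)$\Leftrightarrow$(6), Proposition~\ref{TTTT}(2) for (1)$\Rightarrow$(7), and the duals of Lemma~\ref{Lem4.6} and Proposition~\ref{Prop3.7} for (4)$\Rightarrow$(1).
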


	\begin{cor}
	If $0\neq M$ has finite $\xi$-injective dimension, then
		\[
			\begin{split}
				\id{M}&=\sup\Bigl(
\{0\}\cup \left\{m\in\mathbb{N}^+ \mid \exists I\in\mathcal{I}(\xi) \text{\, such that \,} \ext^m(I,M)\neq 0 \right\}\Bigr)\\
				&=\sup\Bigl(
\{0\}\cup \left\{m\in\mathbb{N}^+ \mid \exists L\in\widehat{\mathcal{I}}(\xi) \text{\, such that \,} \ext^m(L,M)\neq 0 \right\}\Bigr)\\
				&=\sup\Bigl(
\{0\}\cup \left\{m\in\mathbb{N}^+ \mid \exists Q\in\QI \text{\, such that \,} \ext^m(Q,M)\neq 0 \right\}\Bigr)\\
				&=\sup\Bigl(
\{0\}\cup \left\{m\in\mathbb{N}^+ \mid \exists N\in\widehat{\mathcal{QI}}(\xi) \text{\, such that \,} \ext^m(N,M)\neq 0 \right\}\Bigr).
			\end{split}
		\]
	\end{cor}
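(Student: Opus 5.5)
The plan is to reduce the statement to the last assertion of Theorem~\ref{thm4.8}, dualizing the argument given before the corresponding corollary for $\xi$-projective dimension. It suffices to show two things for a nonzero object $M$ with $\id{M}<\infty$: first, that $M\in\widehat{\mathcal{QGI}}(\xi)$; second, that $\Qgid{M}=\id{M}$. Theorem~\ref{thm4.8} then expresses $\Qgid{M}$ as each of the four suprema, and these are exactly the four expressions claimed for $\id{M}$.

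First I will record the inclusions $\mathcal I(\xi)\subseteq\QGI\subseteq\GI$, where $\GI$ is the subcategory of $\xi$-Gorenstein injective objects of \cite{JDP}. The first inclusion holds because, for $I\in\mathcal I(\xi)$, the complex with $I$ in a single degree, obtained by splicing the split $\mathbb E$-triangles $0\to I\to I\dashrightarrow$ and $I\to I\to 0\dashrightarrow$, is a complete quasi-$\xi$-injective coresolution; this is the dual of Remark~\ref{KZB}. The second inclusion holds because a complete quasi-$\xi$-injective coresolution has terms in $\mathcal I(\xi)$ and is $\Mcc(\QI,-)$-exact. Since $\mathcal I(\xi)\subseteq\QI$ (dual of \citep[Remark 3.2]{He}), it is in particular $\Mcc(\mathcal I(\xi),-)$-exact, and hence it is a complete $\xi$-injective coresolution in the sense of \cite{JDP}.

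From these inclusions and the inductive definitions of the three dimensions, I obtain
\[
\xi\text{-}\mc{G}\text{id}(M)\leq\Qgid{M}\leq\id{M}
\]
for every $M$. Any finite $\xi$-injective coresolution is in particular a coresolution by objects of $\QGI$, so $\id{M}<\infty$ gives $M\in\widehat{\mathcal{QGI}}(\xi)$. For the reverse inequality, I will invoke the dual of \cite[Proposition~5.4]{JDP}: for objects of finite $\xi$-injective dimension, the $\xi$-Gorenstein injective dimension equals the $\xi$-injective dimension. This squeezes the chain above to give $\Qgid{M}=\id{M}$.

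Finally, I will apply the last assertion of Theorem~\ref{thm4.8} to the nonzero object $M\in\widehat{\mathcal{QGI}}(\xi)$ and substitute $\id{M}$ for $\Qgid{M}$, which yields the four equalities. The only point requiring care is the availability of the injective version of \cite[Proposition~5.4]{JDP}. If it is not directly quotable, I would instead verify $\id{M}\leq\Qgid{M}$ via the dual of Corollary~\ref{LYF}(1), namely $\mathcal I(\xi)=\QGI\cap\widehat{\mathcal I}(\xi)$. Concretely, I would take a $\xi$-injective coresolution of $M$ of length $\id{M}$ and note that its $d$-th cosyzygy, where $d=\Qgid{M}$, lies in $\QGI$ by Theorem~\ref{thm4.8}(2) and in $\widehat{\mathcal I}(\xi)$, hence lies in $\mathcal I(\xi)$. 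This shows $\id{M}\leq d$.
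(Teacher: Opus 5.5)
Your proposal is correct and is essentially the paper's argument: the corollary is the dual of the remark preceding the projective version. That remark sandwiches $\Qgid{M}$ between the $\xi$-Gorenstein injective dimension and $\id{M}$, uses the dual of \cite[Proposition~5.4]{JDP} to force equality, and then reads off the four suprema from the last assertion of Theorem~\ref{thm4.8}. One small slip: the complex you splice from $0\to I\to I$ and $I\to I\to 0$ has $I$ in two adjacent degrees joined by $1_I$, not in a single degree. Your fallback through $\mathcal I(\xi)=\QGI\cap\widehat{\mathcal I}(\xi)$ is a valid, non-circular substitute for the external citation.
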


	\begin{cor}
		The following equalities hold.

		(1) $\mc{I}(\xi)=\QGI \bigcap \widehat{\mathcal{I}}(\xi)$.

		(2) $\QGI=\widehat{\mc{QGI}}(\xi)\bigcap\widehat{\mathcal{I}}(\xi)^{\perp}=\widehat{\mc{QGI}}(\xi)\bigcap\mc{I}(\xi)^{\perp}$.

		(3) $\QGI=\widehat{\mc{QGI}}(\xi)\bigcap{}\widehat{\mathcal{QI}}(\xi)^{\perp}=\widehat{\mc{QGI}}(\xi)\bigcap\QI^{\perp}$.
	\end{cor}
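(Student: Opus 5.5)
The plan is to dualize the proof of Corollary~\ref{LYF}, using Theorem~\ref{thm4.8} with $n=0$ together with Proposition~\ref{TTTT}(2). Two preliminary inclusions will be used throughout. First, $\mathcal I(\xi)\subseteq\QI$, the dual of \cite[Remark~3.2]{He}. Second, $\widehat{\mathcal I}(\xi)\subseteq\widehat{\mathcal{QI}}(\xi)$: a finite $\xi$-injective coresolution is in particular a coresolution by quasi-$\xi$-injective objects, so by induction along its $\xi$-resolution $\mathbb E$-triangles one gets $\Qid{L}\leq\id{L}$.

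For (2) and (3), I will prove one cycle of inclusions:
\[
\QGI\subseteq\widehat{\mc{QGI}}(\xi)\cap\widehat{\mathcal{QI}}(\xi)^{\perp}\subseteq\widehat{\mc{QGI}}(\xi)\cap\QI^{\perp}\subseteq\widehat{\mc{QGI}}(\xi)\cap\mc{I}(\xi)^{\perp}\subseteq\QGI.
\]
The same argument also runs through $\widehat{\mathcal I}(\xi)^{\perp}$ in place of the middle terms.
\begin{itemize}
\item The first inclusion is Proposition~\ref{TTTT}(2): for $H\in\QGI$ and $N\in\widehat{\mathcal{QI}}(\xi)$ it gives $\ext^n(N,H)=0$ for all $n\geq1$.
\item The middle inclusions hold because $\mc I(\xi)\subseteq\QI\subseteq\widehat{\mathcal{QI}}(\xi)$ and $\mc I(\xi)\subseteq\widehat{\mathcal I}(\xi)\subseteq\widehat{\mathcal{QI}}(\xi)$, so the perps reverse these containments.
\item For the last inclusion, take $M\in\widehat{\mc{QGI}}(\xi)$ with $\ext^{i}(I,M)=0$ for all $I\in\mc I(\xi)$ and $i\geq1$. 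This is condition (4) of Theorem~\ref{thm4.8} with $n=0$, so $\Qgid{M}\leq0$, that is, $M\in\QGI$.
\end{itemize}

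For (1), the inclusion $\mc I(\xi)\subseteq\QGI\cap\widehat{\mathcal I}(\xi)$ is clear. Conversely, let $M\in\QGI\cap\widehat{\mathcal I}(\xi)$. By Proposition~\ref{TTTT}(2), applied with coefficients in $\mc I(\xi)\subseteq\widehat{\mathcal{QI}}(\xi)$, we have $M\in\mc I(\xi)^{\perp}$. It then remains to invoke the dual of Lemma~\ref{Lem4.6}, namely $\mc I(\xi)=\widehat{\mathcal I}(\xi)\cap\mc I(\xi)^{\perp}$.

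That dual lemma is the only step not already stated, and I expect it to be the main, though routine, point. Its proof mirrors Lemma~\ref{Lem4.6}. Take a finite $\xi$-injective coresolution $0\to M\to I^0\to\cdots\to I^m\to0$. Dimension shifting via Lemma~\ref{LZHL} shows that every cosyzygy lies in $\mc I(\xi)^{\perp}$. The last $\mathbb E$-triangle $K\to I^{m-1}\to I^m\dashrightarrow$ then splits, because $\ext^1(I^m,K)=0$ makes $\Mcc(I^{m-1},I^m)\to\Mcc(I^m,I^m)$ surjective; the isomorphisms with $\ext^0$ hold since $I^m\in\mc I(\xi)$. So $K$ is a direct summand of $I^{m-1}$ and hence $\xi$-injective. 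Descending in the same way gives $M\in\mc I(\xi)$.
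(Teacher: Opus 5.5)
Your proposal is correct and is essentially the intended dualization of Corollary~\ref{LYF}: (2) and (3) follow from Theorem~\ref{thm4.8} at $n=0$, Proposition~\ref{TTTT}(2) and the containments $\mathcal I(\xi)\subseteq\QI$ and $\mathcal I(\xi)\subseteq\widehat{\mathcal I}(\xi)\subseteq\widehat{\mathcal{QI}}(\xi)$, while (1) follows from these together with the dual of Lemma~\ref{Lem4.6}. One slip in that dual lemma: the map whose surjectivity you need is $\mathcal{C}(I^m,I^{m-1})\to\mathcal{C}(I^m,I^m)$, obtained from $\ext^0(I^m,I^{m-1})\to\ext^0(I^m,I^m)\to\ext^1(I^m,K)=0$ with both $\ext^0$ groups identified with Hom because $I^{m-1},I^m\in\mathcal I(\xi)$; this makes the deflation $I^{m-1}\to I^m$ a retraction, whereas the map $\mathcal{C}(I^{m-1},I^m)\to\mathcal{C}(I^m,I^m)$ you wrote is not induced by the triangle.
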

\makeatletter
\@ifundefined{problem}{%
  \@ifundefined{c@defn}{%
    \newtheorem{problem}{Problem}[section]%
  }{%
    \newtheorem{problem}[defn]{Problem}%
  }%
}{}
\makeatother

\section{Applications to module categories}\label{sec:applications}
In this section, we study quasi-Gorenstein projective
modules and their homological dimensions.
Let $R$ be an associative ring with identity, and let
$\mathcal C=R\text{-}\operatorname{Mod}$ be the category
of left $R$-modules. We equip $\mathcal C$ with its
canonical exact structure and take $\xi$ to be the
class of all short exact sequences. The relative
extension groups are then the ordinary groups
$\operatorname{Ext}_R^i(-,-)$.

We denote by $\operatorname{Proj}R$, $\mathcal{QP}(R)$
and $\mathcal{QGP}(R)$ the full subcategories of
projective, quasi-projective and quasi-Gorenstein
projective $R$-modules, respectively. The corresponding
dimensions are denoted by $\operatorname{pd}_R$,
$\mathcal Q\text{pd}_R$ and $\mathcal{QG}\text{pd}_R$.
As before, $\widehat{\mathcal{QP}}(R)$ and
$\widehat{\mathcal{QGP}}(R)$ denote the full subcategories
of modules of finite quasi-projective and
quasi-Gorenstein projective dimension, respectively.
For an $R$-module $T$, let $\operatorname{Add}_R T$
and $\operatorname{add}_R T$ denote the full
subcategories consisting of direct summands of
arbitrary and finite direct sums of copies of $T$,
respectively.

We first apply the preceding results to compare
quasi-Gorenstein projective dimension with
projective dimension.

\begin{prop}\label{comparison}
Let $M$ be a left $R$-module such that
$\operatorname{Ext}_R^1(Q,M)=0$ for every
$Q\in\mathcal{QGP}(R)$. Then
\[
    \QGpdR{M}=\pdR{M}.
\]
Moreover,
\[
    \widehat{\mathcal{QGP}}(R)
    \cap\widehat{\mathcal{QP}}(R)
    =
    \bigl\{N\in R\text{-}\operatorname{Mod}
    \bigm|\pdR{N}<\infty\bigr\}.
\]
In particular, every nonprojective
$G\in\mathcal{QGP}(R)$ satisfies
$\QpdR{G}=\infty$.
\end{prop}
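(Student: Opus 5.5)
The plan is to reduce everything to Lemma~\ref{GFC}, which turns a module of finite quasi-Gorenstein projective dimension into a submodule of a module of finite projective dimension with a quasi-Gorenstein projective cokernel. The hypothesis on $M$, or Proposition~\ref{TTTT}(1), then forces this sequence to split.

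\emph{First assertion.} Since $\operatorname{Proj}R\subseteq\mathcal{QGP}(R)$, we always have $\QGpdR{M}\leq\pdR{M}$. So it suffices to show $\pdR{M}\leq n$ whenever $\QGpdR{M}=n<\infty$; the case $M=0$ is trivial.

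\begin{itemize}
\item By Lemma~\ref{GFC} there is a short exact sequence $0\to M\to L\to G'\to0$ with $G'\in\mathcal{QGP}(R)$ and $\pdR{L}\leq n$.
\item By hypothesis $\operatorname{Ext}_R^1(G',M)=0$, so this sequence splits.
\item Hence $M$ is a direct summand of $L$, and therefore $\pdR{M}\leq\pdR{L}\leq n$.
\end{itemize}

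\emph{The equality of classes.} If $\pdR{N}<\infty$, then $N$ lies in both classes, because $\operatorname{Proj}R\subseteq\mathcal{QP}(R)$ and $\operatorname{Proj}R\subseteq\mathcal{QGP}(R)$; in particular $\QGpdR{N}\leq\pdR{N}$.

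Conversely, let $N\in\widehat{\mathcal{QGP}}(R)\cap\widehat{\mathcal{QP}}(R)$. Since $N\in\widehat{\mathcal{QP}}(R)$, Proposition~\ref{TTTT}(1) gives $\operatorname{Ext}_R^1(G,N)=0$ for every $G\in\mathcal{QGP}(R)$. Thus $N$ satisfies the hypothesis of the first assertion, and so $\pdR{N}=\QGpdR{N}<\infty$. (Equivalently, one can repeat the splitting argument directly: the sequence $0\to N\to L\to G'\to0$ from Lemma~\ref{GFC} splits.)

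\emph{The final statement.} Let $G\in\mathcal{QGP}(R)$ and suppose $\QpdR{G}<\infty$. By the equality just proved, $\pdR{G}<\infty$. Then Corollary~\ref{LYF}(1), namely $\P=\QGP\cap\widehat{\mathcal P}(\xi)$, gives $G\in\operatorname{Proj}R$. Hence a nonprojective $G$ must have $\QpdR{G}=\infty$.

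The only step needing real care is identifying the right splitting sequence. Proposition~\ref{Prop4.4} gives a sequence with $M$ as the cokernel, but that would require vanishing of $\operatorname{Ext}^1(M,-)$, which we do not have. The sequence of Lemma~\ref{GFC} has $M$ as the submodule, so the vanishing needed is $\operatorname{Ext}^1(G',M)=0$, which is exactly what the hypothesis provides.
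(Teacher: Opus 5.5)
Your proposal is correct and follows the paper's proof essentially step for step: Lemma~\ref{GFC} gives $0\to M\to L\to G'\to 0$, the Ext hypothesis splits it, and the class equality follows from Proposition~\ref{TTTT}(1). The only cosmetic difference is in the final assertion, where you invoke Corollary~\ref{LYF}(1); it also follows directly from the first assertion, since $\pdR{G}=\QGpdR{G}\leq 0$.
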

\begin{proof}
    Both dimensions are $-1$ when $M=0$. Since
    $\operatorname{Proj}R\subseteq\mathcal{QGP}(R)$,
    \[
        \operatorname{QGpd}_RM\leq\operatorname{pd}_RM.
    \]
    If $\QGpdR{M}=\infty$, then
    $\pdR{M}=\infty$. Suppose that
    $\QGpdR{M}=n<\infty$. By
    Lemma~\ref{GFC}, there is a short exact sequence
    \[
        0\longrightarrow M\longrightarrow L
        \longrightarrow Q\longrightarrow0
    \]
    with $Q\in\mathcal{QGP}(R)$ and
    $\pdR{L}\leq n$. The sequence splits, so
    $\pdR{M}\leq n$, and the first assertion
    follows.

    If
    $N\in\widehat{\mathcal{QGP}}(R)
       \cap\widehat{\mathcal{QP}}(R)$, then 
    $\operatorname{Ext}_R^1(Q,N)=0$ for every
    $Q\in\mathcal{QGP}(R)$ by  Proposition~\ref{TTTT}\textup{(1)}. Hence
    $\pdR{N}=\QGpdR{N}<\infty$.
    The reverse inclusion follows from
    $\operatorname{Proj}R\subseteq
      \mathcal{QGP}(R)\cap\mathcal{QP}(R)$.
\end{proof}
\begin{thm}\label{thm:perfect}
If $R$ is left perfect, then $\mathcal{QGP}(R)=\operatorname{Proj}R$.
\end{thm}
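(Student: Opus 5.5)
The plan is to exploit two features of a left perfect ring $R$: every left $R$-module has a projective cover, and the Jacobson radical $J=J(R)$ is left $T$-nilpotent, so Nakayama's lemma holds for arbitrary (not only finitely generated) left modules: $JX=X$ implies $X=0$. By Proposition~\ref{TTTT}(1), every $G\in\mathcal{QGP}(R)$ satisfies $\operatorname{Ext}^1_R(G,Q)=0$ for all quasi-projective $Q$. I will find enough quasi-projective test modules to force a projective cover of $G$ to be an isomorphism. The inclusion $\operatorname{Proj}R\subseteq\mathcal{QGP}(R)$ is Remark~\ref{KZB}, so only the reverse inclusion needs proof.

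First I check that every semisimple left $R$-module $S$ is quasi-projective. Any epimorphism $g:S\to A$ splits, say $gs=1_A$. Then for any $f:S\to A$ the endomorphism $\phi=sf$ satisfies $g\phi=f$. Since $R/J$ is semisimple Artinian ($R$ being left perfect), every left $R$-module annihilated by $J$ is semisimple. Hence every such module lies in $\mathcal{QP}(R)$, and so
\[
\operatorname{Ext}^1_R(G,X)=0\quad\text{for every left $R$-module $X$ with } JX=0 .
\]

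Next, let $G\in\mathcal{QGP}(R)$ and take a projective cover $0\to K\to P\xrightarrow{p}G\to0$. Since $K$ is superfluous in $P$, we have $K\subseteq JP$; this holds for projective modules over any ring. Put $X=K/JK$, so $JX=0$. The vanishing of $\operatorname{Ext}^1_R(G,X)$ shows that $\operatorname{Hom}_R(P,X)\to\operatorname{Hom}_R(K,X)$ is surjective. Hence the canonical projection $\pi:K\to K/JK$ extends to some $f:P\to K/JK$. But then
\[
f(K)\subseteq f(JP)\subseteq J\cdot(K/JK)=0,
\]
so $\pi=f|_K=0$, which gives $K=JK$. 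By $T$-nilpotency of $J$, $K=0$. Thus $G\cong P$ is projective, as required.

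The only step needing care is the ring-theoretic input: existence of projective covers, and the Nakayama lemma for arbitrary modules, for left perfect rings (Bass's theorem). The correct side conventions must match left modules. Given those standard facts, the argument above is short and uses from the paper only Proposition~\ref{TTTT}(1).
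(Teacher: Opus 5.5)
Your proof is correct and follows essentially the same route as the paper: a projective cover $0\to K\to P\to G\to0$, Ext-vanishing from Proposition~\ref{TTTT}(1) against quasi-projective modules annihilated by $J$, superfluity of $K$ in $P$, and left T-nilpotency of $J$. The differences are cosmetic: you test against the whole semisimple module $K/JK$ and use $K\subseteq JP$ to get $K=JK$ in one step, while the paper tests against a simple quotient of $K/JK$, argues with the maximal submodule $\operatorname{Ker}h$, and proves the Nakayama step $K/JK\neq0$ directly from T-nilpotency instead of citing it.
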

\begin{proof}
Let $G\in\mathcal{QGP}(R)$. Every simple left
$R$-module is quasi-projective. Hence  $\operatorname{Ext}_R^1(G,S)=0$ for every simple left $R$-module $S$ by Proposition~\ref{TTTT}(1).

Let $J$ denote the Jacobson radical of $R$. By
Bass's characterization~\cite[Theorem~P]{Bass}, every
left $R$-module has a projective cover, $R/J$ is
semisimple, and for every sequence $a_1,a_2,\ldots$ in
$J$ there exists $n\geq1$ such that $a_1a_2\cdots a_n=0$.

Let
\[
    0\longrightarrow K\xrightarrow{i}P
    \longrightarrow G\longrightarrow0
\]
be a projective cover of $G$. Then $K$ is superfluous
in $P$. Suppose that $K\neq0$. If $K/JK=0$, then $JK=K$.
Fix $0\neq k_0\in K$. Since $k_0\in JK$, there exist
$a_{1j}\in J$ and $k_{1j}\in K$ such that
\[
    k_0=\sum_{j=1}^{m_1}a_{1j}k_{1j}.
\]
Thus $a_{1j}k_{1j}\neq0$ for some $j$. Set
$a_1=a_{1j}$ and $k_1=k_{1j}$. Inductively, suppose
that 
\[a_1\cdots a_{n-1}k_{n-1}\neq0.\]
Since $k_{n-1}\in JK$, there exist $a_{nj}\in J$ and
$k_{nj}\in K$ such that
\[
    k_{n-1}=\sum_{j=1}^{m_n}a_{nj}k_{nj}.
\]
Hence $a_1\cdots a_{n-1}a_{nj}k_{nj}\neq0$
for some $j$. Set $a_n=a_{nj}$ and $k_n=k_{nj}$.
Proceeding inductively, we obtain a sequence
$a_1,a_2,\ldots$ in $J$ satisfying
\[
    a_1a_2\cdots a_nk_n\neq0,\quad
    n\geq1.
\]
In particular, $a_1a_2\cdots a_n\neq0$ for every
$n\geq1$, contrary to the preceding property of $J$.
Thus we obtain $K/JK\neq0$.

Since $K/JK\neq0$ and $R/J$ is semisimple, $K/JK$
has a nonzero simple quotient $S$. Thus there is an
epimorphism $\overline f:K/JK\longrightarrow S$.
Via the canonical epimorphism $R\to R/J$, $S$ is also
a nonzero simple left $R$-module. If
$q:K\to K/JK$ denotes the canonical epimorphism, set
\[
    f=\overline f q:K\longrightarrow S.
\]
Then $f$ is an epimorphism. Note that $\operatorname{Ext}_R^1(G,S)=0$. There is the following short exact sequence
\[
   0\> \operatorname{Hom}_R(G,S)\>\operatorname{Hom}_R(P,S)
    \longrightarrow
    \operatorname{Hom}_R(K,S)\>0.
\] 
Hence there exists $h:P\to S$ such that $f=hi$.
Since $f$ is surjective, so is $h$. Therefore $P/\operatorname{Ker}h\cong S$,
and $\operatorname{Ker}h$ is a maximal submodule of
$P$.

If $K\nsubseteq\operatorname{Ker}h$, then the
maximality of $\operatorname{Ker}h$ implies $K+\operatorname{Ker}h=P$.
Note that  $K$ is superfluous in $P$. So we have
$\operatorname{Ker}h=P$ and $S\cong P/\operatorname{Ker}h=0$, a contradiction. Hence $K\subseteq\operatorname{Ker}h$.
It follows that $f=hi=0$, contradicting the
surjectivity of $f$. Thus $K=0$, and hence $G\cong P$
is projective. Therefore $\mathcal{QGP}(R)=\operatorname{Proj}R$.
\end{proof}

Throughout the rest of this section, $R$ is commutative unless
otherwise stated, and modules over quotient rings of $R$ are
viewed as $R$-modules via the canonical quotient maps.

For a complete quasi-projective resolution
$\mathbf P=(P_i,d_i)_{i\in\mathbb Z}$, define the coefficient class
\[
    \mathcal E(\mathbf P)
    =
    \bigl\{
        N\in R\text{-}\operatorname{Mod}
        \bigm|
        \operatorname{Hom}_R(\mathbf P,N)
        \text{ is exact}
    \bigr\}.
\]
For each $i\in\mathbb Z$, put $Z_i=\operatorname{Im}d_i$.
Then $Z_i\in\mathcal{QGP}(R)$, and there is a short
exact sequence
\[
    0\longrightarrow Z_{i+1}\longrightarrow P_i
    \longrightarrow Z_i\longrightarrow0.
\]
Let $N\in\widehat{\mathcal{QP}}(R)$.
By Proposition~\ref{TTTT}(1), we have
$\operatorname{Ext}_R^1(Z_i,N)=0$ for every $i\in\mathbb Z$.
Applying $\operatorname{Hom}_R(-,N)$ to the above short
exact sequence, we obtain the following exact sequence:
\[
    0\longrightarrow\operatorname{Hom}_R(Z_i,N)
    \longrightarrow\operatorname{Hom}_R(P_i,N)
    \longrightarrow\operatorname{Hom}_R(Z_{i+1},N)
    \longrightarrow0.
\]
Pasting these exact sequences together, we obtain that
$\operatorname{Hom}_R(\mathbf P,N)$ is exact. Hence $\widehat{\mathcal{QP}}(R)\subseteq  \mathcal E(\mathbf P)$.

\begin{lem}\label{lem:coeff}
 Let $\mathbf P$ be a complete quasi-projective resolution. Then
$\mathcal E(\mathbf P)$ is closed under direct summands. For every
short exact sequence
\[
    0\longrightarrow N'\longrightarrow N\longrightarrow N''\longrightarrow0,
\]
if two of $N'$, $N$, and $N''$ belong to
$\mathcal E(\mathbf P)$, then so does the third. Moreover, $\operatorname{Add}_R T\subseteq\mathcal E(\mathbf P)$
for every finitely generated $R$-module $T$.
\end{lem}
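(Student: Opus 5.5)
Direct summands are immediate from the additivity of $\operatorname{Hom}_R(\mathbf P,-)$. If $N=N_1\oplus N_2$, then $\operatorname{Hom}_R(\mathbf P,N)\cong\operatorname{Hom}_R(\mathbf P,N_1)\oplus\operatorname{Hom}_R(\mathbf P,N_2)$ as complexes. A direct sum of complexes is exact if and only if each summand is exact, so exactness passes to $N_1$ and $N_2$.

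For the two-out-of-three property I use that each $P_i$ is projective. Hence $\operatorname{Hom}_R(P_i,-)$ is exact, and a short exact sequence $0\to N'\to N\to N''\to0$ yields a degreewise short exact sequence of complexes
\[
0\to\operatorname{Hom}_R(\mathbf P,N')\to\operatorname{Hom}_R(\mathbf P,N)\to\operatorname{Hom}_R(\mathbf P,N'')\to0 .
\]
Its long exact cohomology sequence shows that if two of the three complexes are acyclic, then so is the third. This settles the second claim.

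For the last claim, let $T$ be finitely generated. First, $T$ itself lies in $\mathcal E(\mathbf P)$: the paper has already shown $\widehat{\mathcal{QP}}(R)\subseteq\mathcal E(\mathbf P)$, but $T$ need not be quasi-projective, so another route is needed. I would instead exploit that $T$ is finitely generated to get a presentation by finite free modules. Since $R$ is arbitrary (not Noetherian), finite generation alone does not give a finite free resolution, so I would instead argue through exactness of $\operatorname{Hom}(\mathbf P,-)$ and direct limits.

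The cleaner route is as follows. Every module $X$ is a quotient of a free module $F$, and $F\in\operatorname{Proj}R\subseteq\widehat{\mathcal{QP}}(R)\subseteq\mathcal E(\mathbf P)$. This alone does not give closure under quotients. The key observation is rather that $\operatorname{Hom}_R(\mathbf P,-)$ being exact at $N$ is equivalent to $\operatorname{Ext}^1_R(Z_i,N)=0$ for all $i$ together with the Hom-splicing, i.e. $\operatorname{Ext}^{\ge1}_R(Z_i,N)=0$ via dimension shifting along the complete resolution. Hence $\mathcal E(\mathbf P)=\bigcap_i Z_i^{\perp_1}$.

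For $T$ finitely generated with $T^{(I)}$, I want $\operatorname{Ext}^1_R(Z_i,T^{(I)})=0$. I would handle this as follows. The paper defines quasi-projectivity intrinsically, and $T^{(I)}$ is typically not quasi-projective. Instead, I would consider the trace ideal $\tau=\operatorname{tr}_R(T)$ and the module $E=\operatorname{End}_R(T)$. I would attempt to show that $T$ is a quotient of a quasi-projective module with quasi-projective kernel. The cleanest candidate is $Q=R\oplus T$? That is not obviously quasi-projective either.

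So the main obstacle is exactly this last inclusion. My first attempt would be: for commutative $R$ and finitely generated $T$, the module $T/\mathfrak aT$ over $R/\operatorname{ann}$ is handled via the fact that $R/\mathfrak a$ is quasi-projective as an $R$-module for every ideal $\mathfrak a$ (any cyclic module $R/\mathfrak a$ over a commutative ring is quasi-projective). Then filter $T$ by cyclic subquotients $0=T_0\subset T_1\subset\cdots\subset T_m=T$, with $T_j/T_{j-1}\cong R/\mathfrak a_j$, using the generators. Taking direct sums, $T^{(I)}$ gets a finite filtration with factors $(R/\mathfrak a_j)^{(I)}$. These factors are again quasi-projective, since for commutative $R$ a direct sum $(R/\mathfrak a)^{(I)}$ is a free $R/\mathfrak a$-module, and maps to its quotients lift via $R/\mathfrak a$-freeness because any quotient is annihilated by $\mathfrak a$. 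Hence each factor lies in $\widehat{\mathcal{QP}}(R)\subseteq\mathcal E(\mathbf P)$. Finitely many applications of the two-out-of-three property then give $T^{(I)}\in\mathcal E(\mathbf P)$, and closure under summands gives $\operatorname{Add}_R T\subseteq\mathcal E(\mathbf P)$.
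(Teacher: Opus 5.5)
Your final argument is correct and is essentially the paper's proof. You use additivity of $\operatorname{Hom}_R(\mathbf P,-)$ for direct summands, the degreewise short exact sequence of Hom complexes (from projectivity of the $P_i$) for two-out-of-three, and the cyclic filtration of $T$ whose direct-sum factors $(R/\mathfrak a_j)^{(I)}$ are free over $R/\mathfrak a_j$ and hence quasi-projective, followed by closure under extensions and summands. The exploratory detours (finite free resolutions, direct limits, $R\oplus T$, the description of $\mathcal E(\mathbf P)$ via $\operatorname{Ext}^1_R(Z_i,-)$) are not needed and should be cut from a write-up.
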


\begin{proof}
    Let $N\in\mathcal E(\mathbf P)$ and suppose that
    $N=N'\oplus N''$. Then there is an isomorphism
    of complexes
    \[
        \operatorname{Hom}_R(\mathbf P,N)
        \cong
        \operatorname{Hom}_R(\mathbf P,N')
        \oplus
        \operatorname{Hom}_R(\mathbf P,N'').
    \]
    Since $\operatorname{Hom}_R(\mathbf P,N)$ is exact,
    both complexes on the right are exact. Thus
    $N',N''\in\mathcal E(\mathbf P)$, and
    $\mathcal E(\mathbf P)$ is closed under direct summands.

    Now let
    \[
        0\longrightarrow N'\longrightarrow N
        \longrightarrow N''\longrightarrow0
    \]
    be a short exact sequence. Since each $P_i$ is
    projective, applying $\operatorname{Hom}_R(P_i,-)$
    gives a short exact sequence
    \[
        0\longrightarrow\operatorname{Hom}_R(P_i,N')
        \longrightarrow\operatorname{Hom}_R(P_i,N)
        \longrightarrow\operatorname{Hom}_R(P_i,N'')
        \longrightarrow0
    \]
    for every $i\in\mathbb Z$. Hence there is a short
    exact sequence of complexes
    \[
        0\longrightarrow\operatorname{Hom}_R(\mathbf P,N')
        \longrightarrow\operatorname{Hom}_R(\mathbf P,N)
        \longrightarrow\operatorname{Hom}_R(\mathbf P,N'')
        \longrightarrow0.
    \]
    By the induced long exact sequence in cohomology,
    if two of these complexes are exact, then so is
    the third. In particular, $\mathcal E(\mathbf P)$ is closed
    under extensions.

    Let $T$ be generated by $t_1,\ldots,t_s$. Set $T_0=0$
and $T_j=Rt_1+\cdots+Rt_j$ for $1\leq j\leq s$.
Then $T_j/T_{j-1}\cong R/I_j$, where
$I_j=\{r\in R\mid rt_j\in T_{j-1}\}$.
For any set $\Lambda$, taking direct sums gives
short exact sequences
\[
    0\longrightarrow T_{j-1}^{(\Lambda)}
    \longrightarrow T_j^{(\Lambda)}
    \longrightarrow (R/I_j)^{(\Lambda)}
    \longrightarrow0,
    \quad 1\leq j\leq s.
\]
For each $j$, let $U$ be an $R$-submodule of
$(R/I_j)^{(\Lambda)}$, and consider the canonical epimorphism
\[
    \pi:(R/I_j)^{(\Lambda)}
    \twoheadrightarrow (R/I_j)^{(\Lambda)}/U.
\]
Since $I_j$ annihilates $(R/I_j)^{(\Lambda)}$, both $U$
and the quotient $(R/I_j)^{(\Lambda)}/U$ are $R/I_j$-modules. Moreover, $\pi$
and any $R$-linear map $\varphi: (R/I_j)^{(\Lambda)}
    \rightarrow (R/I_j)^{(\Lambda)}/U$ are $R/I_j$-linear.
Since $(R/I_j)^{(\Lambda)}$ is free over $R/I_j$,
there exists an $R/I_j$-linear endomorphism $h$ such
that $\pi h=\varphi$. As $h$ is also $R$-linear,
$(R/I_j)^{(\Lambda)}$ is quasi-projective over $R$.
Therefore,
\[
    (R/I_j)^{(\Lambda)}
    \in\mathcal{QP}(R)\subseteq\mathcal E(\mathbf P).
\]
    Since $T_0^{(\Lambda)}=0$ and
    $\mathcal E(\mathbf P)$ is closed under extensions,
    induction on $j$ gives
    $T_j^{(\Lambda)}\in \mathcal E(\mathbf P)$ for all
    $0\leq j\leq s$. In particular,
    $T^{(\Lambda)}\in\mathcal E(\mathbf P)$.
    The closure under direct summands now implies $\operatorname{Add}_RT\subseteq\mathcal E(\mathbf P)$.
\end{proof}

\begin{cor}\label{cor:quotient-coefficients}
Let $\mathbf P$ be a complete quasi-projective resolution over $R$.
Then the following statements hold.

(1) For every ideal $I$ of $R$, each
$R/I$-module $N$ with $\operatorname{pd}_{R/I}N<\infty$
belongs to $\mathcal E(\mathbf P)$.

(2) If $S$ is an Artinian quotient of
$R$, then every $S$-module belongs to $\mathcal E(\mathbf P)$.
\end{cor}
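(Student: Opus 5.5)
For (1), I will argue by induction on $d=\operatorname{pd}_{R/I}N$, using the two-out-of-three property of Lemma~\ref{lem:coeff}. In the base case $d=0$, $N$ is a projective $R/I$-module, hence a direct summand of a free $R/I$-module $(R/I)^{(\Lambda)}$. Since $R/I$ is a finitely generated (cyclic) $R$-module, Lemma~\ref{lem:coeff} gives $(R/I)^{(\Lambda)}\in\operatorname{Add}_R(R/I)\subseteq\mathcal E(\mathbf P)$. Closure under direct summands then gives $N\in\mathcal E(\mathbf P)$.

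One could also argue directly that $(R/I)^{(\Lambda)}$ is quasi-projective over $R$, as in the proof of Lemma~\ref{lem:coeff}, but invoking the final assertion of that lemma is cleaner. For $d\geq1$, I will choose a short exact sequence of $R/I$-modules
\[
0\longrightarrow K\longrightarrow F\longrightarrow N\longrightarrow0
\]
with $F$ free over $R/I$ and $\operatorname{pd}_{R/I}K\leq d-1$. Viewed over $R$, this is still a short exact sequence. By the base case and the induction hypothesis, $F,K\in\mathcal E(\mathbf P)$, and the two-out-of-three property yields $N\in\mathcal E(\mathbf P)$.

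For (2), let $S=R/I$ be Artinian. Every $S$-module $N$ has a Loewy filtration: with $J$ the Jacobson radical of $S$, which is nilpotent, say $J^m=0$, we have
\[
0=J^mN\subseteq J^{m-1}N\subseteq\cdots\subseteq JN\subseteq N .
\]
Each factor $J^{k}N/J^{k+1}N$ is a module over the semisimple ring $S/J$, hence a direct sum of simple modules. Since $S$ is Artinian, there are only finitely many simple modules $S/\mathfrak m_1,\dots,S/\mathfrak m_r$, each finitely generated (cyclic) over $R$. Therefore each factor lies in $\operatorname{Add}_R\bigl(\bigoplus_{j}S/\mathfrak m_j\bigr)\subseteq\mathcal E(\mathbf P)$ by Lemma~\ref{lem:coeff}.

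Alternatively, each factor is a projective $S/J$-module, and part (1) applied to the ideal defining $S/J$ gives the same conclusion. Descending induction along the finite filtration, using closure under extensions, then shows $N\in\mathcal E(\mathbf P)$.

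The only point needing care is the use of the Artinian hypothesis. Nilpotence of $J$ guarantees that the filtration is finite, so that only finitely many extension steps are needed; this matters because $N$ is arbitrary, not finitely generated. Semisimplicity of $S/J$ makes each layer a direct sum of copies of finitely many cyclic modules. Both facts are standard for commutative Artinian rings, so I expect no real obstacle beyond bookkeeping.
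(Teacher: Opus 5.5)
Your proposal is correct and follows the paper's proof. Part (1) is the same induction along a finite projective resolution, using $\operatorname{Add}_R(R/I)\subseteq\mathcal E(\mathbf P)$ and the two-out-of-three property. Part (2) uses the same finite $J$-adic filtration together with closure under extensions.

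The only difference is how you handle the layers $J^kN/J^{k+1}N$. The paper notes directly that they are semisimple $R$-modules, hence quasi-projective, hence in $\mathcal E(\mathbf P)$. You instead place them in $\operatorname{Add}_R$ of a finitely generated module, or apply part (1) to $S/J$; this is equally valid.
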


\begin{proof}
   (1) Let $I$ be an ideal of $R$, and let $N$ be an
$R/I$-module with $\operatorname{pd}_{R/I}N<\infty$.
Since $R/I$ is finitely generated as an $R$-module, it
follows from Lemma~\ref{lem:coeff} that $\operatorname{Add}_R(R/I)\subseteq\mathcal E(\mathbf P)$.
Let $Q$ be a projective $R/I$-module. Then $Q$ is a direct
summand of $(R/I)^{(\Lambda)}$ for some set $\Lambda$, and
hence
\[
    Q\in\operatorname{Add}_R(R/I)
    \subseteq\mathcal E(\mathbf P).
\]
Since $\operatorname{pd}_{R/I}N<\infty$, there exists a finite
projective resolution
\[
    0\longrightarrow Q_n\longrightarrow Q_{n-1}
    \longrightarrow\cdots\longrightarrow Q_0
    \longrightarrow N\longrightarrow0
\]
over $R/I$.  If $n=0$, then $N\cong Q_0$ and hence
$N\in\mathcal E(\mathbf P)$. Now assume that $n\geq1$.
Let $L_0=N$ and
$L_{i+1}=\operatorname{Ker}(Q_i\longrightarrow L_i)$
for $0\leq i<n$. Then $L_n\cong Q_n$, and for each
$0\leq i<n$ there is a short exact sequence
\[
    0\longrightarrow L_{i+1}\longrightarrow Q_i
    \longrightarrow L_i\longrightarrow0.
\]
Since $L_n,Q_i\in\mathcal E(\mathbf P)$ for $0\leq i<n$,
it follows from Lemma~\ref{lem:coeff} by descending
induction that $L_i\in\mathcal E(\mathbf P)$ for
$0\leq i\leq n$. Hence $N=L_0\in\mathcal E(\mathbf P)$.

(2)    Let $S=R/I$ be an Artinian quotient of $R$, and let $J$ be the Jacobson radical of $S$. Since $S$ is Artinian, $J$ is nilpotent,
    and hence $J^h=0$ for some $h\geq1$.
    Moreover, $S/J$ is semisimple. Thus, for every
    $S$-module $N$, there is a finite filtration
    \[
        0=J^hN\subseteq J^{h-1}N\subseteq\cdots
        \subseteq JN\subseteq N.
    \]
For each $0\leq i<h$, the quotient module
    $J^iN/J^{i+1}N$ is annihilated by $J$, and hence is
    naturally an $S/J$-module. Since $S/J$ is semisimple,
    $J^iN/J^{i+1}N$ is semisimple as an $S/J$-module.
    Moreover, the canonical homomorphism  $R\longrightarrow S/J$ is surjective. Therefore, the $R$-submodules of
    $J^iN/J^{i+1}N$ coincide with its $S/J$-submodules.
    It follows that $J^iN/J^{i+1}N$ is also semisimple
    as an $R$-module. Hence, for every $R$-submodule
    $L\subseteq J^iN/J^{i+1}N$, the canonical projection
    \[
        J^iN/J^{i+1}N
        \longrightarrow
        \bigl(J^iN/J^{i+1}N\bigr)/L
    \]
    splits. Thus $J^iN/J^{i+1}N$ is quasi-projective
    as an $R$-module, and consequently
    \[
        J^iN/J^{i+1}N
        \in\mathcal{QP}(R)\subseteq\mathcal E(\mathbf P).
    \]
For each $0\leq i<h$, there is a short exact sequence
    \[
        0\longrightarrow J^{i+1}N
        \longrightarrow J^iN
        \longrightarrow J^iN/J^{i+1}N
        \longrightarrow0.
    \]
Since $J^hN=0\in\mathcal E(\mathbf P)$ and
$\mathcal E(\mathbf P)$ is closed under extensions by
Lemma~\ref{lem:coeff}, descending induction along the above
filtration yields $N\in\mathcal E(\mathbf P)$.
\end{proof}

\begin{cor}\label{cor:fg-qgp}
Every finitely generated module in $\mathcal{QGP}(R)$ is projective.
\end{cor}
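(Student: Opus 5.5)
The plan is to show that a finitely generated quasi-Gorenstein projective module is a direct summand of a projective module. I will use the coefficient class $\mathcal E(\mathbf P)$ together with Lemma~\ref{lem:coeff}, which puts $G$ itself into that class.

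Let $G\in\mathcal{QGP}(R)$ be finitely generated. By definition there is a complete quasi-projective resolution $\mathbf P=(P_i,d_i)_{i\in\mathbb Z}$ with $G\cong Z_0=\operatorname{Im}d_0\subseteq P_{-1}$. Let $\iota:Z_0\to P_{-1}$ be the inclusion and $p:P_0\to Z_0$ the corestriction of $d_0$, so that $d_0=\iota p$ and $p$ is surjective. Since $G$ is finitely generated, Lemma~\ref{lem:coeff} applied with $T=G$ gives $G\in\operatorname{Add}_RG\subseteq\mathcal E(\mathbf P)$. Hence $\operatorname{Hom}_R(\mathbf P,G)$ is exact.

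Next I extract a retraction of $\iota$ from this exactness. Take $f=1_{Z_0}$, viewed as a map $Z_0\to G$. The composite $fp:P_0\to G$ satisfies $fp\,d_1=0$, because $pd_1=0$. Thus $fp$ is a cocycle in $\operatorname{Hom}_R(\mathbf P,G)$ at the spot $P_0$. By exactness it is a coboundary, so there is $g:P_{-1}\to G$ with $fp=gd_0=g\iota p$. Since $p$ is an epimorphism, this gives $g\iota=f=1$. The only care needed is to check the sign and indexing conventions for the Hom complex, so that the cocycle and coboundary conditions come out as stated.

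Therefore $\iota$ is a split monomorphism, and $G$ is a direct summand of the projective module $P_{-1}$, hence projective. I do not expect a real obstacle. The key input is the membership $G\in\mathcal E(\mathbf P)$, which Lemma~\ref{lem:coeff} already supplies using that $R$ is commutative and $G$ is finitely generated. This is the same splitting argument as in Proposition~\ref{qgpqpcap}, but with the coefficient $G\in\mathcal E(\mathbf P)$ in place of the assumption $G\in\mathcal{QP}(R)$.
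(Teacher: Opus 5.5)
Your proof is correct and follows the paper's argument. Lemma~\ref{lem:coeff} with $T=G$ puts $G$ in $\mathcal E(\mathbf P)$, and exactness of $\operatorname{Hom}_R(\mathbf P,G)$ then forces the inclusion $G\cong\operatorname{Im}d_0\hookrightarrow P_{-1}$ to split; the only difference is that you spell out, via the cocycle given by the corestriction $P_0\to\operatorname{Im}d_0$, why this exactness yields a retraction, which the paper states in one line.
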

\begin{proof}
Let $G\in\mathcal{QGP}(R)$ be finitely generated. By definition,
there is a complete quasi-projective resolution
$\mathbf P=(P_i,d_i)_{i\in\mathbb Z}$ such that
$G=\operatorname{Coker}d_1$. By Lemma~\ref{lem:coeff},
$G\in\mathcal E(\mathbf P)$. Hence the short exact sequence
\[
    0\longrightarrow G\longrightarrow P_{-1}
    \longrightarrow\operatorname{Coker}d_0\longrightarrow0
\]
is $\operatorname{Hom}_R(-,G)$-exact, and therefore splits.
Thus $G$ is projective.
\end{proof}

By the Raynaud--Gruson theorem
\cite[II, Corollary~3.2.7]{RG}
(see also \cite[Theorem~4.1]{NY}), every flat module $F$
over a commutative Noetherian ring $S$ of finite Krull
dimension satisfies $\operatorname{pd}_S F\leq \dim S$.

We denote by $\operatorname{Spec}R$ the set of prime ideals
of $R$. For $\mathfrak p\in\operatorname{Spec}R$, let
$R_{\mathfrak p}=(R\setminus\mathfrak p)^{-1}R$ be the
localization of $R$ at $\mathfrak p$. For an $R$-module $M$,
set $M_{\mathfrak p}=R_{\mathfrak p}\otimes_R M$. Then
$R_{\mathfrak p}$ is a local ring with maximal ideal
$\mathfrak pR_{\mathfrak p}$, and its residue field is
denoted by
$\kappa(\mathfrak p)
=R_{\mathfrak p}/\mathfrak pR_{\mathfrak p}
\cong\operatorname{Frac}(R/\mathfrak p)$.

By \cite[Corollary~5]{Hashimoto}, we have the following.

\begin{lem}\label{lem:hashimoto}
Let $R$ be a commutative Noetherian ring and let $\mathbf X$
be a complex of projective $R$-modules. If $\kappa(\mathfrak p)\otimes_R\mathbf X $ is exact for every $\mathfrak p\in\operatorname{Spec}R$, then
$\mathbf X$ is null-homotopic; equivalently, it is contractible.
\end{lem}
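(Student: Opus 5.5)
The statement is quoted from \cite[Corollary~5]{Hashimoto}. To prove it directly, I would show two things: first that $\mathbf X$ is \emph{pure acyclic}, meaning $M\otimes_R\mathbf X$ is exact for every $R$-module $M$; and then that a pure acyclic complex of projective modules is contractible. The second step is Neeman's theorem on the homotopy category of flat modules. I would cite it rather than reprove it, and I regard it as the one genuinely non-formal input.

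\emph{Reduction to cyclic modules $R/\mathfrak p$.} Homology commutes with filtered colimits, and every module is a filtered colimit of finitely generated submodules. Hence it suffices to prove exactness of $M\otimes_R\mathbf X$ for finitely generated $M$. Since each term of $\mathbf X$ is projective, hence flat, a short exact sequence $0\to M'\to M\to M''\to0$ gives a short exact sequence of complexes after tensoring with $\mathbf X$. The long exact homology sequence then shows that exactness is inherited by extensions. Because $R$ is Noetherian, a finitely generated $M$ has a finite filtration with factors $R/\mathfrak q$, $\mathfrak q\in\operatorname{Spec}R$. So it is enough to show that $\mathbf X\otimes_R R/\mathfrak p$ is exact for every prime $\mathfrak p$.

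\emph{Noetherian induction on primes.} Suppose $\mathbf X\otimes_R R/\mathfrak p$ fails to be exact for some prime; since $R$ is Noetherian, choose $\mathfrak p$ maximal among such primes. Put $D=R/\mathfrak p$ and $\mathbf Y=\mathbf X\otimes_R D$, a complex of projective $D$-modules.
\begin{itemize}
\item Take $0\neq a\in D$. Then $D/aD$ is a finitely generated $R$-module whose prime filtration has factors $R/\mathfrak q$ with $\mathfrak q\supsetneq\mathfrak p$. By maximality and the extension argument above, $\mathbf Y\otimes_D D/aD=\mathbf X\otimes_R D/aD$ is exact.
\item The sequence $0\to D\xrightarrow{a}D\to D/aD\to0$ is exact because $D$ is a domain. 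Tensoring it with $\mathbf Y$ and taking the long exact homology sequence shows that multiplication by $a$ is an isomorphism on every $H_n(\mathbf Y)$.
\item Hence each $H_n(\mathbf Y)$ is a vector space over $K=\operatorname{Frac}D=\kappa(\mathfrak p)$. Since $K$ is flat over $D$,
\[
H_n(\mathbf Y)\cong K\otimes_D H_n(\mathbf Y)\cong H_n(K\otimes_D\mathbf Y)=H_n(\kappa(\mathfrak p)\otimes_R\mathbf X)=0
\]
by hypothesis.
\end{itemize}
This contradicts the choice of $\mathfrak p$. Thus $\mathbf X\otimes_R R/\mathfrak p$ is exact for all primes, and so $\mathbf X$ is pure acyclic.

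\emph{Conclusion.} Taking $M=R$ shows that $\mathbf X$ is acyclic. Pure acyclicity means every short exact sequence $0\to Z_n\to X_n\to Z_{n-1}\to0$ of cycles is pure, so all cycle modules are flat. By Neeman's theorem, a pure acyclic complex of projective modules is null-homotopic, which finishes the proof. The equivalence with contractibility is standard: a null-homotopic complex has split cycle sequences. This last step is the main obstacle, since it needs set-theoretic and homotopical input specific to unbounded complexes of infinitely generated projectives. If I wanted to avoid citing it, I would try to show that each $Z_n$ is projective using the Raynaud--Gruson bound and a dimension-shifting argument. That route, however, only works when $\dim R<\infty$, so in general I would rely on Neeman's theorem.
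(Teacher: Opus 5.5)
The paper gives no argument of its own for this lemma; it imports the statement directly from Hashimoto's Corollary~5, so there is no internal route to compare against. Your proposal is correct, and it amounts to a genuine proof modulo one citation.

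The reduction to pure acyclicity works as you describe:
\begin{itemize}
\item Every factor $R/\mathfrak q$ in a prime filtration of $D/aD$ satisfies $\mathfrak q\supseteq\mathfrak p+(\tilde a)$ for a lift $\tilde a$ of $a$. Hence $\mathfrak q\supsetneq\mathfrak p$, and maximality of $\mathfrak p$ applies.
\item Flatness of the terms of $\mathbf Y$ over the domain $D$ gives the short exact sequence $0\to\mathbf Y\xrightarrow{a}\mathbf Y\to\mathbf Y/a\mathbf Y\to0$.
\item A $D$-module on which all nonzero elements act bijectively coincides with its localization at $D\setminus\{0\}$.
\end{itemize}
Purity of the sequences $0\to Z_n\to X_n\to Z_{n-1}\to0$, with $X_n$ flat, then forces $\operatorname{Tor}_1^R(-,Z_{n-1})=0$. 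So the cycles are flat, and Neeman's theorem, which holds over any ring, finishes the proof.

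Compared with the bare citation, your argument shows exactly where the Noetherian hypothesis enters (prime filtrations and Noetherian induction). It also isolates the one non-formal input.

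Your closing remark is worth keeping. The paper applies Lemma~\ref{lem:hashimoto} only in Theorem~\ref{thm:finite-krull}, where $\dim R=d<\infty$ and the Raynaud--Gruson bound is already in use. In that setting your dimension-shifting variant, $\operatorname{Ext}^1_R(Z_n,-)\cong\operatorname{Ext}^{d+1}_R(Z_{n-d},-)=0$, shows the cycles are projective. This makes the case the paper actually needs self-contained, without Neeman. The citation, in turn, is shorter and covers arbitrary Noetherian $R$ at no extra cost.
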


\begin{thm}\label{thm:finite-krull}
Let $R$ be a commutative Noetherian ring of finite Krull
dimension. Then
$\mathcal{QGP}(R)=\operatorname{Proj}R$. Moreover, every
complete quasi-projective resolution over $R$ is
contractible.
\end{thm}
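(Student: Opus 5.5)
The plan is to prove the second assertion first; the first then follows. Let $\mathbf P=(P_i,d_i)_{i\in\mathbb Z}$ be a complete quasi-projective resolution over $R$. By Lemma~\ref{lem:hashimoto}, it suffices to show that $\kappa(\mathfrak p)\otimes_R\mathbf P$ is exact for every $\mathfrak p\in\operatorname{Spec}R$. Once $\mathbf P$ is contractible, each $Z_i=\operatorname{Im}d_i$ is a direct summand of $P_{i-1}$, hence projective. So every $G\in\mathcal{QGP}(R)$, being some $Z_i$, is projective, which gives $\mathcal{QGP}(R)=\operatorname{Proj}R$.

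To obtain exactness of $\kappa(\mathfrak p)\otimes_R\mathbf P$, I would pass to Hom via duality. Fix $\mathfrak p$ and put $k=\kappa(\mathfrak p)$. Let $E$ be an injective cogenerator of $k$-modules; one may take $E=k$ itself, since $k$ is a field. Hom--tensor adjunction gives
\[
\operatorname{Hom}_k(k\otimes_R\mathbf P,k)\cong\operatorname{Hom}_R(\mathbf P,k).
\]
Since $k$ is a field, $\operatorname{Hom}_k(-,k)$ is faithfully exact on $k$-vector spaces. Hence $k\otimes_R\mathbf P$ is exact if and only if $\operatorname{Hom}_R(\mathbf P,k)$ is exact, that is, if and only if $\kappa(\mathfrak p)\in\mathcal E(\mathbf P)$. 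It therefore remains to show $\kappa(\mathfrak p)\in\mathcal E(\mathbf P)$ for every prime $\mathfrak p$.

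This membership is the main step. Write $S=R/\mathfrak p$, a Noetherian domain with $\dim S\le\dim R<\infty$, and note $\kappa(\mathfrak p)=\operatorname{Frac}(S)$. Over $S$ the module $\kappa(\mathfrak p)$ is flat, being a localization of $S$. By the Raynaud--Gruson theorem quoted above, $\operatorname{pd}_S\kappa(\mathfrak p)\le\dim S<\infty$. Corollary~\ref{cor:quotient-coefficients}(1) with $I=\mathfrak p$ then gives $\kappa(\mathfrak p)\in\mathcal E(\mathbf P)$. Here finiteness of the Krull dimension is used exactly once, to make this projective dimension finite.

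I expect the only delicate point to be the duality step, namely checking that the adjunction isomorphism is compatible with the differentials so that exactness transfers. This is routine, since $\operatorname{Hom}_R(P_i,k)\cong\operatorname{Hom}_k(k\otimes_RP_i,k)$ naturally in $P_i$. Contractibility then follows from Lemma~\ref{lem:hashimoto}, and the equality $\mathcal{QGP}(R)=\operatorname{Proj}R$ follows as explained in the first paragraph.
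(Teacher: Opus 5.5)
Your proposal is correct and follows the paper's proof essentially step for step. Raynaud--Gruson together with Corollary~\ref{cor:quotient-coefficients}(1) puts $\kappa(\mathfrak p)$ in $\mathcal E(\mathbf P)$; tensor--Hom adjunction and exactness of duality over the field $\kappa(\mathfrak p)$ then give exactness of $\kappa(\mathfrak p)\otimes_R\mathbf P$, and Lemma~\ref{lem:hashimoto} yields contractibility, so each $\operatorname{Im}d_i$ is a direct summand of a projective module.
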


\begin{proof}
Let $\mathbf P=(P_i,d_i)_{i\in\mathbb Z}$ be an arbitrary
complete quasi-projective resolution, and put
$G=\operatorname{Coker}d_1$. For each
$\mathfrak p\in\operatorname{Spec}R$, the residue field
$\kappa(\mathfrak p)$ is flat over $R/\mathfrak p$. Note that 
$R/\mathfrak p$ is a commutative Noetherian ring and
$\dim(R/\mathfrak p)\leq\dim R<\infty$. So we have
$\operatorname{pd}_{R/\mathfrak p}\kappa(\mathfrak p)
<\infty$ by the Raynaud--Gruson theorem. It follows from
Corollary~\ref{cor:quotient-coefficients}\textup{(1)} that
$\kappa(\mathfrak p)\in\mathcal E(\mathbf P)$, and hence
$\operatorname{Hom}_R(\mathbf P,\kappa(\mathfrak p))$ is
exact.

By the tensor--Hom adjunction, there is a natural
isomorphism of complexes
\[
    \operatorname{Hom}_R
    \bigl(\mathbf P,\kappa(\mathfrak p)\bigr)
    \cong
    \operatorname{Hom}_{\kappa(\mathfrak p)}
    \bigl(\kappa(\mathfrak p)\otimes_R\mathbf P,
          \kappa(\mathfrak p)\bigr).
\]
Thus the complex on the right is exact.  Note that linear duality over the field $\kappa(\mathfrak p)$ is exact, the cohomology of this Hom-complex is naturally isomorphic to the dual of the corresponding homology of $\kappa(\mathfrak p)\otimes_R P$. Since a vector space is zero whenever its dual is zero,
$\kappa(\mathfrak p)\otimes_R\mathbf P$ is exact. Therefore, $\mathbf P$ is
contractible by Lemma~\ref{lem:hashimoto}. Thus $\mathbf P$ is split exact. In particular,
$G\cong\operatorname{Im}d_0$ is a direct summand of
$P_{-1}$, and hence $G$ is projective. This proves
$\mathcal{QGP}(R)\subseteq\operatorname{Proj}R$. The
reverse inclusion follows from the definition.
\end{proof}

Let $S$ be a commutative ring. For an $S$-module $T$, the
ideal
$\operatorname{tr}_S(T)
=\sum_{f\in\operatorname{Hom}_S(T,S)}f(T)$
is called the trace ideal of $T$. If
$\Gamma=\operatorname{End}_S(T)$, then
$\operatorname{Hom}_S(T,N)$ is regarded as a right
$\Gamma$-module by precomposition for every $S$-module $N$.
All projective dimensions over endomorphism rings below
are taken for right modules.

\begin{lem}\label{lem:trace}
Let $S$ be a commutative ring, $T$ a finitely generated
$S$-module, and $N$ an $S$-module. Set
$\Gamma=\operatorname{End}_S(T)$. Suppose that
$\operatorname{pd}_{\Gamma}\operatorname{Hom}_S(T,N)\leq d$
for some integer $d\geq0$. Then there exists an augmented
complex
\[
    \mathbf A:\quad
    0\longrightarrow A_d\longrightarrow\cdots
    \longrightarrow A_0\longrightarrow N\longrightarrow0
\]
with $N$ in degree $-1$ and
$A_i\in\operatorname{Add}_S T$ for $0\leq i\leq d$, such
that $\operatorname{Hom}_S(T,\mathbf A)$ is exact and
$\operatorname{tr}_S(T)H_i(\mathbf A)=0$ for every
$i\in\mathbb Z$. Moreover, if $\lambda$ is an infinite cardinal with
$|S|,|N|\leq\lambda$, then $\mathbf A$ may be chosen
so that each $A_i$ is a direct summand of
$T^{(\lambda)}$ and $|H_i(\mathbf A)|\leq\lambda$
for all $i$.
\end{lem}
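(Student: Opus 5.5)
We will use the standard equivalence $\operatorname{Hom}_S(T,-)\colon \operatorname{Add}_S T\to \operatorname{Proj}\Gamma$, $T\otimes_\Gamma -$ in the reverse direction. It sends $T^{(\Lambda)}$ to $\Gamma^{(\Lambda)}$, and for every right $\Gamma$-module $F$ the evaluation map $\operatorname{Hom}_S(T,T\otimes_\Gamma F)\to F$ is an isomorphism when $F$ is projective. We use the right-module convention, namely $\operatorname{Hom}_S(T,N)\otimes_\Gamma T\to N$ with $\Gamma$ acting by precomposition. Finite generation of $T$ is what makes $\operatorname{Hom}_S(T,-)$ commute with direct sums, so that $\operatorname{Hom}_S(T,T^{(\Lambda)})\cong\Gamma^{(\Lambda)}$.

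Set $M=\operatorname{Hom}_S(T,N)$. Choose a projective resolution $0\to F_d\to\cdots\to F_0\to M\to0$ over $\Gamma$ of length $d$. This is possible because $\operatorname{pd}_\Gamma M\le d$: truncate at the $d$-th syzygy, which is projective. Put $A_i=F_i\otimes_\Gamma T\in\operatorname{Add}_S T$. Lift the differentials, and map $A_0\to N$ by the composite $F_0\otimes_\Gamma T\to M\otimes_\Gamma T\to N$, the second map being evaluation. Under the equivalence, $\operatorname{Hom}_S(T,A_\bullet)\cong F_\bullet$, and the augmentation corresponds to $F_0\to M$. Hence $\operatorname{Hom}_S(T,\mathbf A)$ is exact, and in particular $\mathbf A$ is a complex, since $\operatorname{Hom}_S(T,-)$ is faithful on maps out of $\operatorname{Add}_S T$.

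For the homology we show that $\operatorname{tr}_S(T)$ kills it. Let $z$ be a cycle in degree $i$ and $t=f(x)$ with $f\in\operatorname{Hom}_S(T,S)$ and $x\in T$. Define $\varphi\colon T\to A_i$ by $\varphi(y)=f(y)z$. Then $\varphi$ lies in the kernel of $\operatorname{Hom}_S(T,A_i)\to\operatorname{Hom}_S(T,A_{i-1})$, because $\partial z=0$. By exactness of $\operatorname{Hom}_S(T,\mathbf A)$, $\varphi=\partial\circ\psi$ for some $\psi\colon T\to A_{i+1}$, and so $tz=\varphi(x)=\partial\psi(x)$ is a boundary. This covers $H_{-1}=N/\operatorname{Im}$ as well, because the complex includes $N$ in degree $-1$. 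Since $\operatorname{tr}_S(T)$ is generated by such elements $f(x)$, we get $\operatorname{tr}_S(T)H_i(\mathbf A)=0$.

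For the cardinality refinement, we have $|\Gamma|\le|T|^{n}\le\lambda$ and $|M|\le|N|^{n}\le\lambda$, where $n$ is the number of generators of $T$ and $|T|\le|S|^n\le\lambda$. So we may start with a free cover $\Gamma^{(\lambda)}\to M$ and choose each syzygy inside $\Gamma^{(\lambda)}$ by successive free covers of modules of size at most $\lambda$. The last term $F_d$ is projective of size at most $\lambda$, hence a summand of $\Gamma^{(\lambda)}$. Each $F_i$ is then a summand of $\Gamma^{(\lambda)}$, and so $A_i$ is a summand of $T^{(\lambda)}$. Moreover $|A_i|\le|T^{(\lambda)}|\le\lambda$, which bounds $|H_i|\le\lambda$. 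The main point needing care is the first step: that $\operatorname{Hom}_S(T,F\otimes_\Gamma T)\cong F$ naturally for projective $F$, including infinite sums. This reduces by additivity to $F=\Gamma$ using finite generation of $T$.
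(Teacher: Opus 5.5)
Your proposal is correct and follows essentially the same route as the paper: you tensor a length-$d$ projective resolution of $\operatorname{Hom}_S(T,N)$ with $T$, use finite generation of $T$ to get $\operatorname{Hom}_S(T,Q\otimes_\Gamma T)\cong Q$ for projective $Q$, kill the homology with the maps $t\mapsto f(t)z$, and bound cardinalities using free modules of rank $\lambda$ together with the projective $d$th syzygy. The only slip is notational: in your first paragraph the natural isomorphism is $F\to\operatorname{Hom}_S(T,F\otimes_\Gamma T)$, $q\mapsto(t\mapsto q\otimes t)$, with $F$ on the left of the tensor product, as you in fact use later.
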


\begin{proof}
Let $B=\operatorname{Hom}_S(T,N)$, regarded as a right
$\Gamma$-module by precomposition, and regard $T$ as a
left $\Gamma$-module by evaluation. By hypothesis, there
is an exact sequence
\[
    0\longrightarrow Q_d\longrightarrow\cdots
    \longrightarrow Q_0\xrightarrow{\varepsilon}B
    \longrightarrow0
\]
of right $\Gamma$-modules with each $Q_i$ projective.
Set $A_i=Q_i\otimes_{\Gamma}T$ for $0\leq i\leq d$.
Since $Q_i$ is a direct summand of a free right
$\Gamma$-module, $A_i\in\operatorname{Add}_S T$.
Tensor the chosen resolution with $T$ and compose its
augmentation with the evaluation map
$B\otimes_{\Gamma}T\to N$, $f\otimes t\mapsto f(t)$.
This defines an augmented complex $\mathbf A$, whose
last differential $\partial_0:A_0\to N$ satisfies
$\partial_0(q\otimes t)=\varepsilon(q)(t)$.

Since $T$ is finitely generated, the functor
$\operatorname{Hom}_S(T,-)$ commutes with arbitrary
direct sums. Hence the canonical map
$F\longrightarrow\operatorname{Hom}_S
(T,F\otimes_{\Gamma}T)$ is an isomorphism for any
free right $\Gamma$-module $F$.
Each $Q_i$ is a direct summand of a free right
$\Gamma$-module, so the canonical maps
$\eta_i:Q_i\longrightarrow\operatorname{Hom}_S(T,A_i)$,
defined by $\eta_i(q)(t)=q\otimes t$, are isomorphisms.
These maps commute with the differentials by naturality.
Note that $\partial_0\circ\eta_0(q)=\varepsilon(q)$
for every $q\in Q_0$, so they also commute with the
augmentations. It follows that
$\operatorname{Hom}_S(T,\mathbf A)$ is isomorphic to
the chosen projective resolution of $B$ and is
therefore exact.

Put $A_{-1}=N$ and $A_{d+1}=A_{-2}=0$, and denote the
differentials of $\mathbf A$ by
$\partial_i:A_i\to A_{i-1}$. Let $-1\leq i\leq d$,
$z\in\operatorname{Ker}\partial_i$, and
$f\in\operatorname{Hom}_S(T,S)$. The homomorphism
$T\to A_i$, $t\mapsto f(t)z$, is a cycle in
$\operatorname{Hom}_S(T,\mathbf A)$. Since this complex
is exact, there exists an $S$-homomorphism
$v:T\to A_{i+1}$ such that
\[
    f(t)z=\partial_{i+1}v(t)
    \quad\text{for every }t\in T.
\]
Hence $f(T)$ annihilates the class of $z$ in
$H_i(\mathbf A)$. So $\operatorname{tr}_S(T)H_i(\mathbf A)=0$
for $-1\leq i\leq d$ by the definition of the trace ideal. The homology in all other degrees
is zero.

Assume $|S|,|N|\leq\lambda$. Since $T$ is finitely
generated, $|T|,|\Gamma|,|B|\leq\lambda$.
We may choose the resolution of $B$ above with
$Q_i$ free of rank at most $\lambda$ for $i<d$,
as each successive kernel has cardinality at
most $\lambda$. The $d$th syzygy $Q_d$ is
projective by $\operatorname{pd}_{\Gamma}B\leq d$
and has cardinality at most $\lambda$. For $d=0$,
the same holds with $Q_0=B$. It follows that
each $Q_i$ is a direct summand of
$\Gamma^{(\lambda)}$. Tensoring these decompositions with $T$, we obtain
$A_i=Q_i\otimes_{\Gamma}T$ as a direct summand of
$T^{(\lambda)}$. Since $|T^{(\lambda)}|\leq\lambda$,
each $A_i$ has cardinality at most $\lambda$.
The same bound holds for $H_i(\mathbf A)$, which
is a subquotient of $A_i$ for $0\leq i\leq d$
and a quotient of $N$ for $i=-1$.
\end{proof}

\begin{prop}\label{prop:reduction}
Let $R\twoheadrightarrow S$ be a surjective homomorphism of commutative rings and $\mathbf P$ a complete quasi-projective resolution over $R$. Suppose that $T$ is a finitely generated $S$-module with $\Gamma=\operatorname{End}_S(T)$ and $\mathfrak t=\operatorname{tr}_S(T)$. If every $S/\mathfrak t$-module, regarded as an $R$-module, belongs to $\mathcal E(\mathbf P)$, then every $S$-module $N$ satisfying $\operatorname{pd}_{\Gamma}\operatorname{Hom}_S(T,N)<\infty$ belongs to $\mathcal E(\mathbf P)$.  
\end{prop}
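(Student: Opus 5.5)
Let $N$ be an $S$-module with $\operatorname{pd}_{\Gamma}\operatorname{Hom}_S(T,N)\leq d<\infty$. The plan is to apply Lemma~\ref{lem:trace} to $N$ and obtain the augmented complex
\[
    \mathbf A:\quad 0\longrightarrow A_d\longrightarrow\cdots\longrightarrow A_0\longrightarrow N\longrightarrow0
\]
with $A_i\in\operatorname{Add}_S T$ and $\mathfrak t H_i(\mathbf A)=0$ for all $i$. We then show that every term and every homology module lies in $\mathcal E(\mathbf P)$. Finally, we use the two-out-of-three property of Lemma~\ref{lem:coeff} to climb from these pieces back to $N$.

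First, the terms $A_i$ belong to $\mathcal E(\mathbf P)$. Since $R\to S$ is surjective and $T$ is a finitely generated $S$-module, $T$ is also finitely generated as an $R$-module. A direct summand of $T^{(\Lambda)}$ over $S$ is the same as one over $R$, so $\operatorname{Add}_S T=\operatorname{Add}_R T\subseteq\mathcal E(\mathbf P)$ by Lemma~\ref{lem:coeff}.

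Second, each homology module $H_i(\mathbf A)$ is an $S$-module annihilated by $\mathfrak t$. It is therefore an $S/\mathfrak t$-module, and it lies in $\mathcal E(\mathbf P)$ by hypothesis.

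Third, we break $\mathbf A$ into short exact sequences. Write $\partial_i:A_i\to A_{i-1}$ with $A_{-1}=N$, and set $Z_i=\operatorname{Ker}\partial_i$, $B_i=\operatorname{Im}\partial_{i+1}$ for $-1\leq i\leq d$. We have the sequences
\[
0\to B_i\to Z_i\to H_i(\mathbf A)\to 0
\quad\text{and}\quad
0\to Z_i\to A_i\to B_{i-1}\to0 .
\]
We run an ascending induction on $i$ proving that $B_i$ and $Z_i$ lie in $\mathcal E(\mathbf P)$ for all $i\leq d$; the induction starts at $i=d$ and goes downward in degree. At the top, $B_d=0$, so $Z_d\cong H_d(\mathbf A)\in\mathcal E(\mathbf P)$. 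From $Z_i$ and $A_i$ in $\mathcal E(\mathbf P)$, the second sequence gives $B_{i-1}\in\mathcal E(\mathbf P)$. From $B_{i-1}$ and $H_{i-1}(\mathbf A)$, the first sequence gives $Z_{i-1}\in\mathcal E(\mathbf P)$. The induction ends in degree $-1$, where $Z_{-1}=N$; hence $N\in\mathcal E(\mathbf P)$.

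Both steps in this induction use that $\mathcal E(\mathbf P)$ is closed under extensions and under cokernels of monomorphisms, which is exactly the two-out-of-three statement of Lemma~\ref{lem:coeff}. The only point needing care is the identification $\operatorname{Add}_S T=\operatorname{Add}_R T$ and the fact that $T$ stays finitely generated over $R$, and both follow at once from the surjectivity of $R\to S$. No step looks like a serious obstacle; the bookkeeping of kernels and images in the induction is the main thing to write down carefully.
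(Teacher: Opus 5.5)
Your proposal is correct and follows the paper's proof essentially step for step. Both arguments apply Lemma~\ref{lem:trace}, place the terms in $\mathcal E(\mathbf P)$ via $\operatorname{Add}_S T\subseteq\operatorname{Add}_R T$ and Lemma~\ref{lem:coeff}, and place the homology in $\mathcal E(\mathbf P)$ by the $S/\mathfrak t$ hypothesis. Both then run the same descending two-out-of-three induction through the kernel/image short exact sequences down to $N$. (Only the inclusion $\operatorname{Add}_S T\subseteq\operatorname{Add}_R T$ is needed, though your equality also holds.)
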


\begin{proof}
Let $\operatorname{pd}_{\Gamma}\operatorname{Hom}_S(T,N)\leq d$ for some integer $d\geq0$.
By Lemma~\ref{lem:trace}, there is a complex
\[
    \mathbf A:\quad
    0\longrightarrow A_d\stackrel{\partial_d}\longrightarrow A_{d-1} \longrightarrow\cdots\longrightarrow A_1\stackrel{\partial_1}
    \longrightarrow A_0\stackrel{\partial_0}\longrightarrow N\longrightarrow0
\]
with $A_i\in\operatorname{Add}_S T$ for $0\leq i\leq d$,
whose homology modules are annihilated by $\mathfrak t$ and hence
are naturally $S/\mathfrak t$-modules. So all $H_i(\mathbf{A})$ belong to $\mathcal{E}(\mathbf{P})$ by hypothesis.
Note that $R\to S$ is surjective. Thus $T$ is finitely generated
over $R$ and $\operatorname{Add}_S T\subseteq\operatorname{Add}_R T$.
Hence $A_i\in\mathcal E(\mathbf P)$ for $0\leq i\leq d$ by Lemma~\ref{lem:coeff}.  The case $d=0$ follows from
$\ker\partial_0=H_0(\mathbf A)\in\mathcal E(\mathbf P)$
and the short exact sequences below. We may assume $d\geq1$. Consider the short exact sequences
\[
    0\longrightarrow \operatorname{Im}\partial_{i}\longrightarrow \operatorname{Ker}\partial_{i-1}\longrightarrow H_{i-1}(\mathbf{A}) \longrightarrow0
    \quad\text{and}\quad
    0\longrightarrow \operatorname{Ker}\partial_{i}\longrightarrow A_i
    \longrightarrow \operatorname{Im}\partial_{i}\longrightarrow0
\] 
for $1\leq i\leq d$.
It is easy to see $\operatorname{Ker}\partial_d=H_d(\mathbf{A})\in \mathcal E(\mathbf P)$. Thus we have $\operatorname{Im}\partial_{d}$ and $\operatorname{Ker}\partial_{d-1}$ both lie in $\mathcal E(\mathbf P)$ by Lemma~\ref{lem:coeff}. Repeating this argument, we obtain
$\operatorname{Ker}\partial_{0} \in\mathcal E(\mathbf P)$. It follows from the following short exact sequences
\[
    0\longrightarrow \operatorname{Ker}\partial_{0}\longrightarrow A_0
    \longrightarrow \operatorname{Im}\partial_{0}\longrightarrow0
  \quad\text{and}\quad
 0\longrightarrow \operatorname{Im}\partial_{0}\longrightarrow N\longrightarrow H_{-1}(\mathbf{A}) \longrightarrow0
 \] 
that $N\in\mathcal E(\mathbf P)$.
\end{proof}

\begin{cor}\label{cor:generator}
Let $R$ be a commutative ring and let $T$ be a finitely
generated generator of $R\text{-}\operatorname{Mod}$.
Set $\Gamma=\operatorname{End}_R(T)$. If
$\operatorname{r.gldim}\Gamma<\infty$, then
$\mathcal{QGP}(R)=\operatorname{Proj}R$.
\end{cor}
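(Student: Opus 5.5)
Our plan is to apply Proposition~\ref{prop:reduction} with $S=R$ and $R\twoheadrightarrow S$ the identity. Let $G\in\mathcal{QGP}(R)$, and choose a complete quasi-projective resolution $\mathbf P=(P_i,d_i)_{i\in\mathbb Z}$ with $G=\operatorname{Coker}d_1$. We will show that every $R$-module lies in $\mathcal E(\mathbf P)$. Once this is known, $G\in\mathcal E(\mathbf P)$, so the short exact sequence $0\to G\to P_{-1}\to\operatorname{Coker}d_0\to0$ is $\operatorname{Hom}_R(-,G)$-exact. Hence it splits and $G$ is projective, exactly as in Corollary~\ref{cor:fg-qgp}. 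The reverse inclusion $\operatorname{Proj}R\subseteq\mathcal{QGP}(R)$ holds by definition.

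The trace ideal takes care of the hypothesis of Proposition~\ref{prop:reduction}. Since $T$ is a generator of $R\text{-}\operatorname{Mod}$, $R$ is an epimorphic image of some $T^{(\Lambda)}$. As $R$ is projective, this epimorphism splits, so there are maps $f_j\in\operatorname{Hom}_R(T,R)$ and elements $t_j\in T$ (finitely many) with $\sum_j f_j(t_j)=1$. Thus $\mathfrak t=\operatorname{tr}_R(T)=R$ and $R/\mathfrak t=0$. The only $R/\mathfrak t$-module is $0$, which trivially lies in $\mathcal E(\mathbf P)$, so the hypothesis holds vacuously.

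For the other input, let $N$ be any $R$-module. Since $\operatorname{r.gldim}\Gamma=d<\infty$, the right $\Gamma$-module $\operatorname{Hom}_R(T,N)$ has projective dimension at most $d$. Proposition~\ref{prop:reduction} then gives $N\in\mathcal E(\mathbf P)$ for every $N$, and the argument of the first paragraph concludes the proof.

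There is no real obstacle; the only point needing care is the verification that $\operatorname{tr}_R(T)=R$. This uses only that $R$ is a direct summand of a direct sum of copies of $T$; one can even take a finite sum, since $R$ is cyclic.
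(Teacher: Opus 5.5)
Your proposal is correct and follows essentially the same route as the paper: you show $\operatorname{tr}_R(T)=R$, so the hypothesis of Proposition~\ref{prop:reduction} (with $S=R$) holds vacuously; you then use $\operatorname{r.gldim}\Gamma<\infty$ to get $G\in\mathcal E(\mathbf P)$ and conclude that $0\to G\to P_{-1}\to\operatorname{Coker}d_0\to0$ splits. The only cosmetic differences are that you first observe that every $R$-module lies in $\mathcal E(\mathbf P)$ before specializing to $G$, and that you use the splitting of $T^{(\Lambda)}\to R$, whereas surjectivity alone already gives $1\in\operatorname{tr}_R(T)$.
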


\begin{proof}
Since $T$ is a generator, there is an epimorphism
$T^{(\Lambda)}\to R$ for some set $\Lambda$.
Its component maps show that $\operatorname{tr}_R(T)=R$,
so the only module over $R/\operatorname{tr}_R(T)$ is
the zero module.  Let $G\in\mathcal{QGP}(R)$ and choose a complete
quasi-projective resolution
$\mathbf P=(P_i,d_i)_{i\in\mathbb Z}$ with
$G=\operatorname{Coker}d_1$. Since $\operatorname{pd}_{\Gamma}\operatorname{Hom}_R(T,G)<\infty$, it follows from Proposition~\ref{prop:reduction} that $G\in\mathcal E(\mathbf P)$. Hence the short exact sequence
\[
    0\longrightarrow G\xrightarrow{u}P_{-1}
    \longrightarrow\operatorname{Coker}d_0
    \longrightarrow0
\]
is $\operatorname{Hom}_R(-,G)$-exact and splits.
Thus $G$ is projective. The reverse inclusion follows from the
definition.
\end{proof}
We next establish a criterion for projective dimension in terms of Ext vanishing against a finite family of quasi-projective modules.

Set $S_0=R$, and suppose that there is a finite sequence $S_0,S_1,\ldots,S_\ell$ of commutative rings such that, for each $0\le j<\ell$, either $S_{j+1}=S_j/\operatorname{tr}_{S_j}(T_j)$, where $T_j$ is a nonzero finitely generated $S_j$-module satisfying
\[\operatorname{r.gldim}\operatorname{End}_{S_j}(T_j)\le d_j<\infty,
\]
or $S_{j+1}=S_j/J_j$ for a nilpotent ideal $J_j$ of $S_j$. Assume that $S_\ell$ is Artinian, possibly the zero ring, and let $\mathcal T$ denote the set of indices $j$ for which the first alternative occurs.  Let 
\[c=\sum_{j\in\mathcal T}(d_j+1),\] with $c=0$ if $\mathcal T=\varnothing$. All $S_j$-modules are regarded as $R$-modules by restriction of scalars. For a set $X$, let $|X|$ denote its cardinality.

For each $j\in\mathcal T$, $T_j$ is finitely generated as an $R$-module. As in the proof of Lemma \ref{lem:coeff}, there is a finite filtration
\[
0=T_{j,0}\subseteq T_{j,1}\subseteq\cdots\subseteq T_{j,s_j}=T_j
\]
such that $T_{j,u}/T_{j,u-1}\cong R/I_{j,u}$ for some ideal $I_{j,u}$ of $R$. Let $R/\mathfrak m_1,\ldots,R/\mathfrak m_v$ represent the isomorphism classes of simple $S_\ell$-modules. For each infinite cardinal $\lambda\ge |R|$, set
\[
\mathscr Q_\lambda=
\bigl\{(R/I_{j,u})^{(\lambda)}
\mid j\in\mathcal T,\ 1\le u\le s_j\bigr\}
\cup
\bigl\{(R/\mathfrak m_a)^{(\lambda)}
\mid 1\le a\le v\bigr\}.
\]
Thus $\mathscr Q_\lambda$ is a finite family of quasi-projective $R$-modules.

\begin{lem}\label{lem:ext-range}
Let $t$ be a nonnegative integer, $X$ an
$R$-module and
\[
    \mathbf A:\quad
    0\longrightarrow A_d\stackrel{\partial_d}\longrightarrow A_{d-1} \longrightarrow\cdots\longrightarrow A_1\stackrel{\partial_1}
    \longrightarrow A_0\stackrel{\partial_0}\longrightarrow N\longrightarrow0
\]
be a complex. If $\operatorname{Ext}_R^{t+i}(X,A_i)=0$ for $0\leq i\leq d$
and $\operatorname{Ext}_R^{t+i+1}(X,H_i(\mathbf A))=0$
for $-1\leq i\leq d$, then
$\operatorname{Ext}_R^t(X,N)=0$.
\end{lem}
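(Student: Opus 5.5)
We argue by dimension shifting along the same short exact sequences used in the proof of Proposition~\ref{prop:reduction}. Put $A_{-1}=N$, $A_{d+1}=0$, and for $-1\leq i\leq d$ write $Z_i=\operatorname{Ker}\partial_i$ and $B_i=\operatorname{Im}\partial_{i+1}\subseteq Z_i$, with $\partial_{-1}=0$, so that $Z_{-1}=N$ and $B_d=0$. The complex is then assembled from the short exact sequences
\[
0\to B_i\to Z_i\to H_i(\mathbf A)\to0
\quad(-1\leq i\leq d)
\qquad\text{and}\qquad
0\to Z_i\to A_i\to B_{i-1}\to0
\quad(0\leq i\leq d).
\]
The aim is to show, by descending induction on $i$ from $d$ down to $-1$, that $\operatorname{Ext}_R^{t+i+1}(X,Z_i)=0$. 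At $i=-1$ this is exactly $\operatorname{Ext}_R^t(X,N)=0$, because $Z_{-1}=N$.

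For the base case $i=d$, we have $B_d=0$, so $Z_d\cong H_d(\mathbf A)$. The hypothesis then gives $\operatorname{Ext}_R^{t+d+1}(X,Z_d)=0$ directly.

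For the inductive step, fix $0\leq i\leq d$ and assume $\operatorname{Ext}_R^{t+i+1}(X,Z_i)=0$. Applying $\operatorname{Hom}_R(X,-)$ to $0\to Z_i\to A_i\to B_{i-1}\to0$ gives the exact segment
\[
\operatorname{Ext}_R^{t+i}(X,A_i)\to\operatorname{Ext}_R^{t+i}(X,B_{i-1})\to\operatorname{Ext}_R^{t+i+1}(X,Z_i).
\]
The left term vanishes by hypothesis and the right term vanishes by induction, so $\operatorname{Ext}_R^{t+i}(X,B_{i-1})=0$. Next, applying $\operatorname{Hom}_R(X,-)$ to $0\to B_{i-1}\to Z_{i-1}\to H_{i-1}(\mathbf A)\to0$ gives
\[
\operatorname{Ext}_R^{t+i}(X,B_{i-1})\to\operatorname{Ext}_R^{t+i}(X,Z_{i-1})\to\operatorname{Ext}_R^{t+i}(X,H_{i-1}(\mathbf A)).
\]
The left term is zero by the previous step. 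The right term is zero by hypothesis, since $t+i=t+(i-1)+1$. Hence $\operatorname{Ext}_R^{t+(i-1)+1}(X,Z_{i-1})=0$, which completes the inductive step.

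No step is a genuine obstacle. The only care needed is the index bookkeeping: the degrees must drop by exactly one at each $A_i$ stage and stay fixed at each $H_i$ stage, and the endpoints must be handled correctly. At the top, $Z_d=H_d(\mathbf A)$; at the bottom, $Z_{-1}=N$ and $H_{-1}(\mathbf A)=N/\operatorname{Im}\partial_0$. All Ext degrees involved are at least $t\geq0$, so no negative-degree issues arise.
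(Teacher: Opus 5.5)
Your proof is correct and takes essentially the same route as the paper. Both use the same two families of short exact sequences with a descending induction. The only difference is bookkeeping: you carry the vanishing of $\operatorname{Ext}_R^{t+i+1}(X,\operatorname{Ker}\partial_i)$ and absorb the last step via $Z_{-1}=N$, while the paper carries $\operatorname{Ext}_R^{t+i}(X,\operatorname{Im}\partial_i)=0$ and handles $0\to\operatorname{Im}\partial_0\to N\to H_{-1}(\mathbf A)\to0$ separately at the end.
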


\begin{proof}
For $0\leq i\leq d$, there are short exact sequences
\[
    0\longrightarrow\operatorname{Ker}\partial_i
    \longrightarrow A_i
    \longrightarrow\operatorname{Im}\partial_i
    \longrightarrow0\quad\text{and}\quad  0\longrightarrow\operatorname{Im}\partial_{i+1}
    \longrightarrow\operatorname{Ker}\partial_i
    \longrightarrow H_i(\mathbf A)
    \longrightarrow0,
\]
where $\operatorname{Im}\partial_{d+1}=0$. Note that $\operatorname{Ker}\partial_d=H_d(\mathbf A)$
and these sequences imply by descending induction that
$\operatorname{Ext}_R^{t+i}(X,\operatorname{Im}\partial_i)=0$
for $0\leq i\leq d$.
So we have the following exact sequence
\[
    0=\operatorname{Ext}_R^{t}(X,\operatorname{Im}\partial_0)
    \longrightarrow \operatorname{Ext}_R^{t}(X,N)
    \longrightarrow \operatorname{Ext}_R^{t}(X,H_{-1}(\mathbf A))=0.
\]
Therefore, $\operatorname{Ext}_R^{t}(X,N)=0$.
\end{proof}

\begin{prop}\label{lem:finite-test-vanishing}
Let $r$ be a positive integer, $\lambda\geq |R|$ an
infinite cardinal and $X$ an $R$-module. If
$\operatorname{Ext}_R^q(X,Q)=0$ for any
$Q\in\mathscr Q_\lambda$ and $r\leq q\leq r+c$,
then $\operatorname{Ext}_R^r(X,N)=0$ for any
$R$-module $N$ with $|N|\leq\lambda$.
\end{prop}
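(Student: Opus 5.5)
We argue by descending induction along the chain $S_\ell,S_{\ell-1},\ldots,S_0=R$, proving for each $j$ a statement of the form: if $\operatorname{Ext}_R^q(X,Q)=0$ for all $Q\in\mathscr Q_\lambda$ and $r\le q\le r+c_j$, then $\operatorname{Ext}_R^{r'}(X,N)=0$ for every $S_j$-module $N$ with $|N|\le\lambda$ and every $r'$ with $r\le r'\le r+c-c_j$. Here $c_j=\sum_{i\in\mathcal T,\,i\ge j}(d_i+1)$, so $c_0=c$ and $c_\ell=0$. Because the degree windows shift, it is cleaner to formulate the claim as follows. Let $W_j$ be the set of degrees $q$ in which $\operatorname{Ext}_R^q(X,N)$ vanishes for all $S_j$-modules $N$ of cardinality at most $\lambda$. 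We show that $W_\ell\supseteq[r,r+c]$, and that passing from $S_{j+1}$ to $S_j$ shrinks the window from the top by $d_j+1$ at a trace step and not at all at a nilpotent step. At $j=0$ the window is then $[r,r]$, which is the assertion.

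\emph{Base step.} Let $N$ be an $S_\ell$-module with $|N|\le\lambda$. As in Corollary~\ref{cor:quotient-coefficients}(2), $N$ has a finite Loewy filtration whose factors are semisimple $S_\ell$-modules of cardinality at most $\lambda$. Each factor is a direct sum of at most $\lambda$ copies of the simples $R/\mathfrak m_a$, hence a direct summand of $\bigoplus_a(R/\mathfrak m_a)^{(\lambda)}$. Since Ext in the second variable commutes with finite direct sums and passes to summands, the hypothesis kills $\operatorname{Ext}_R^q(X,-)$ on these factors for $q\in[r,r+c]$. The long exact sequences then give the same vanishing for $N$.

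\emph{Nilpotent step.} For $S_{j+1}=S_j/J_j$ with $J_j^h=0$, filter an $S_j$-module $N$ by $J_j^iN$. Each factor is an $S_{j+1}$-module of cardinality at most $\lambda$, so extension closure of Ext-vanishing in a fixed degree preserves the whole window.

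\emph{Trace step.} For $S_{j+1}=S_j/\operatorname{tr}(T_j)$, take an $S_j$-module $N$ with $|N|\le\lambda$. Since the right global dimension of $\operatorname{End}(T_j)$ is at most $d_j$, Lemma~\ref{lem:trace} gives a complex $\mathbf A$ of length $d=d_j$ ending in $N$. Its terms $A_i$ are summands of $T_j^{(\lambda)}$, and its homology modules are $S_{j+1}$-modules of cardinality at most $\lambda$. Via the filtration $T_{j,u}$, each $T_j^{(\lambda)}$ is an iterated extension of the modules $(R/I_{j,u})^{(\lambda)}\in\mathscr Q_\lambda$, so $\operatorname{Ext}^q_R(X,A_i)=0$ for all $q\in[r,r+c]$. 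We then apply Lemma~\ref{lem:ext-range} with $t=r'$. It requires vanishing of $\operatorname{Ext}^{r'+i}(X,A_i)$ for $0\le i\le d$, which holds since $r'+d\le r+c$. It also requires vanishing of $\operatorname{Ext}^{r'+i+1}(X,H_i(\mathbf A))$ for $-1\le i\le d$, so degrees up to $r'+d+1$ must lie in the window $W_{j+1}$. Consequently $W_j$ contains every $r'$ with $r'+d_j+1\le\max W_{j+1}$, together with $r'\ge r$. Summing the losses $d_j+1$ over $j\in\mathcal T$ uses up exactly $c$, leaving the single degree $r$.

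The main obstacle is this bookkeeping. The windows must be chosen so that the lower endpoint never drops below $r$; this holds because Lemma~\ref{lem:ext-range} only uses degrees $\ge t$. The upper endpoint must shrink by exactly $d_j+1$ per trace step. Two further points need checking. First, the cardinality bounds must propagate: $|S_j|\le|R|\le\lambda$, and the homology modules have cardinality at most $\lambda$ by Lemma~\ref{lem:trace}. Second, summands of $T_j^{(\lambda)}$ inherit the vanishing, which follows from additivity of Ext.
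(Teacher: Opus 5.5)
Your proposal is correct and follows the paper's proof essentially verbatim. The paper runs the same descending induction along $S_\ell,\dots,S_0$ with window $[r,r+\rho_j]$, where $\rho_j=\sum_{k\in\mathcal T,\,k<j}(d_k+1)$ is exactly your $c-c_j$, using the Loewy filtration at the Artinian end, the $J_j$-adic filtration at nilpotent steps, and Lemma~\ref{lem:trace} with Lemma~\ref{lem:ext-range} at trace steps. The only slip is your first, discarded formulation with hypothesis window $[r,r+c_j]$: the hypothesis must be the full range $[r,r+c]$ at every stage, as in your $W_j$ version.
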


\begin{proof}
For $0\leq j\leq\ell$, let
\[
    \rho_j=\sum_{\substack{k\in\mathcal T\\k<j}}(d_k+1).
\]
Since $\mathcal T\subseteq\{0,\ldots,\ell-1\}$,
$\rho_0=0$ and $\rho_\ell=c$. We prove by descending
induction on $j$ that
\[
    \operatorname{Ext}_R^q(X,N)=0,\quad r\leq q\leq r+\rho_j
\]
for every $S_j$-module $N$ with $|N|\leq\lambda$.

For $j=\ell$, the assertion is clear if $S_\ell=0$.
Otherwise, let $J$ be the Jacobson radical of $S_\ell$.
Since $J$ is nilpotent, there is an integer $m\geq1$
such that
\[
    N\supseteq JN\supseteq\cdots\supseteq J^mN=0.
\]
For $0\leq k<m$, the quotient $J^kN/J^{k+1}N$ is
annihilated by $J$ and has cardinality at most
$\lambda$. Since $S_\ell/J$ is semisimple, $J^kN/J^{k+1}N$
is a direct summand of
$\bigoplus_{a=1}^{v}(R/\mathfrak m_a)^{(\lambda)}$. 
Consequently, $\operatorname{Ext}_R^q(X,J^kN/J^{k+1}N)=0$
for $0\leq k<m$ and $r\leq q\leq r+c$.
The short exact sequences
\[
0\longrightarrow J^{k+1}N\longrightarrow J^kN
\longrightarrow J^kN/J^{k+1}N\longrightarrow0,
\quad0\leq k<m
\]
and $J^mN=0$ imply
$\operatorname{Ext}_R^q(X,J^kN)=0$ for $0\leq k\leq m$
and $r\leq q\leq r+c$ by descending induction on $k$.
In particular, $\operatorname{Ext}_R^q(X,N)=0$ for
$r\leq q\leq r+\rho_\ell$, since $\rho_\ell=c$.

Suppose the assertion holds at $j+1$, and let $N$ be an
$S_j$-module with $|N|\leq\lambda$. If $j\notin\mathcal T$, then $S_{j+1}=S_j/J_j$
and $J_j^m=0$ for some $m\geq1$. For $0\leq k<m$,
the quotient $J_j^kN/J_j^{k+1}N$ is an
$S_{j+1}$-module of cardinality at most $\lambda$.
Thus 
\[\operatorname{Ext}_R^q
(X,J_j^kN/J_j^{k+1}N)=0\; \text{for}\;
r\leq q\leq r+\rho_{j+1}
\] by the induction hypothesis.
The short exact sequences
\[
0\longrightarrow J_j^{k+1}N\longrightarrow J_j^kN
\longrightarrow J_j^kN/J_j^{k+1}N\longrightarrow0,
\quad 0\leq k<m
\]
and $J_j^mN=0$ imply by descending induction that
$\operatorname{Ext}_R^q(X,N)=0$ for
$r\leq q\leq r+\rho_{j+1}$.
Since $\rho_j=\rho_{j+1}$, the assertion follows for $j$.

If $j\in\mathcal T$, there exists a complex 
\[
    \mathbf A:\quad
    0\longrightarrow A_{d_j}\longrightarrow\cdots
    \longrightarrow A_0\longrightarrow N
    \longrightarrow0
\]
with $A_i$ a direct summand of $T_j^{(\lambda)}$
for $0\leq i\leq d_j$ and $H_i(\mathbf A)$ an
$S_{j+1}$-module of cardinality at most $\lambda$
for $-1\leq i\leq d_j$ by Lemma~\ref{lem:trace}. The filtration of $T_j$ induces short exact sequences
\[
0\longrightarrow T_{j,u-1}^{(\lambda)}
\longrightarrow T_{j,u}^{(\lambda)}
\longrightarrow (R/I_{j,u})^{(\lambda)}
\longrightarrow0,
\quad 1\leq u\leq s_j.
\]
Since $T_{j,0}^{(\lambda)}=0$ and
$(R/I_{j,u})^{(\lambda)}\in\mathscr Q_\lambda$,
the long exact Ext sequences imply by induction on $u$
that 
\[\operatorname{Ext}_R^q(X,T_j^{(\lambda)})=0\;\text{for}\; r\leq q\leq r+c.\] As $A_i$ is a direct summand
of $T_j^{(\lambda)}$, it follows that
\[
\operatorname{Ext}_R^q(X,A_i)=0 \; \text{for}\; 0\leq i\leq d_j\;\text{and}\; r\leq q\leq r+c.\]
By the induction hypothesis, we have
\[\operatorname{Ext}_R^q(X,H_i(\mathbf A))=0 \;\text{for}\; -1\leq i\leq d_j\; \text{and}\;
r\leq q\leq r+\rho_{j+1}.\] For $r\leq t\leq r+\rho_j$, the equality
$\rho_{j+1}=\rho_j+d_j+1\leq c$ implies
$r\leq t+i\leq r+c$ for $0\leq i\leq d_j$ and
$r\leq t+i+1\leq r+\rho_{j+1}$ for
$-1\leq i\leq d_j$. It follows from Lemma~\ref{lem:ext-range} that
$\operatorname{Ext}_R^t(X,N)=0$. Thus the assertion holds for $j\in\mathcal T$.

The cases $j\notin\mathcal T$ and $j\in\mathcal T$
complete the descending induction. Since $S_0=R$
and $\rho_0=0$, we have
$\operatorname{Ext}_R^r(X,N)=0$ for every $R$-module
$N$ with $|N|\leq\lambda$.
\end{proof}

\begin{thm}\label{thm:finite-test}
Let $\lambda\geq|R|$ be an infinite cardinal and $X$ an
$R$-module with $|X|\leq\lambda$. For any integer
$n\geq0$, $\operatorname{pd}_R X\leq n$ if and only if
$\operatorname{Ext}_R^{n+i}(X,Q)=0$ for all
$Q\in\mathscr Q_\lambda$ and $1\leq i\leq c+1$.
In particular, $\mathcal{QGP}(R)=\operatorname{Proj}R$.
\end{thm}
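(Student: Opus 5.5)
The forward direction is immediate. If $\operatorname{pd}_RX\leq n$, then $\operatorname{Ext}_R^{m}(X,-)=0$ for all $m>n$, and in particular for $m=n+i$ with $1\leq i\leq c+1$.

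For the converse, I will apply Proposition~\ref{lem:finite-test-vanishing} with $r=n+1$. The hypothesis $\operatorname{Ext}_R^{q}(X,Q)=0$ for $n+1\leq q\leq n+1+c$ and all $Q\in\mathscr Q_\lambda$ is exactly the stated vanishing. That proposition then gives $\operatorname{Ext}_R^{n+1}(X,N)=0$ for every $R$-module $N$ with $|N|\leq\lambda$. The remaining task is to pass from this bounded-cardinality vanishing to $\operatorname{pd}_RX\leq n$.

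Choose a projective resolution of $X$ by free modules of rank at most $\lambda$. This is possible since $|X|\leq\lambda$ and $|R|\leq\lambda$: each syzygy has cardinality at most $\lambda$ and is generated by at most $\lambda$ elements. Let $\Omega^nX$ be the $n$th syzygy and $\Omega^{n+1}X$ the next one, so there is a short exact sequence
\[
0\longrightarrow\Omega^{n+1}X\longrightarrow F_n\longrightarrow\Omega^nX\longrightarrow0
\]
with $F_n$ free. Both syzygies have cardinality at most $\lambda$. Dimension shifting gives $\operatorname{Ext}_R^1(\Omega^nX,\Omega^{n+1}X)\cong\operatorname{Ext}_R^{n+1}(X,\Omega^{n+1}X)$ (for $n=0$ this is trivial), and the right side vanishes because $|\Omega^{n+1}X|\leq\lambda$. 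Hence the sequence splits, so $\Omega^nX$ is projective and $\operatorname{pd}_RX\leq n$. The only point needing care is the cardinality bookkeeping for the syzygies; I expect this to be the main obstacle, though it is routine.

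For the final assertion, let $G\in\mathcal{QGP}(R)$. Every member of $\mathscr Q_\lambda$ is quasi-projective, so by Proposition~\ref{TTTT}(1) we have $\operatorname{Ext}_R^{m}(G,Q)=0$ for all $m\geq1$ and all $Q\in\mathscr Q_\lambda$, for every infinite cardinal $\lambda\geq|R|$. Take $\lambda=\max\{|R|,|G|,\aleph_0\}$ and apply the equivalence with $n=0$; this gives $\operatorname{pd}_RG\leq0$, so $G$ is projective. The reverse inclusion $\operatorname{Proj}R\subseteq\mathcal{QGP}(R)$ holds by definition.
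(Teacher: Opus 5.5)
Your proposal is correct and follows the paper's own proof essentially step for step: Proposition~\ref{lem:finite-test-vanishing} with $r=n+1$, then a free resolution of rank at most $\lambda$ so that the $(n+1)$st syzygy has cardinality at most $\lambda$, dimension shifting to split $0\to\Omega^{n+1}X\to F_n\to\Omega^nX\to0$, and finally the case $n=0$ together with Proposition~\ref{TTTT}(1) for the last assertion. The cardinality bookkeeping you flag is handled in the paper exactly as you describe.
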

\begin{proof}
If $\operatorname{pd}_R X\leq n$, then
$\operatorname{Ext}_R^{n+i}(X,Q)=0$ for any
$R$-module $Q$ and any $i\geq1$. Conversely, suppose that
$\operatorname{Ext}_R^{n+i}(X,Q)=0$ for all
$Q\in\mathscr Q_\lambda$ and $1\leq i\leq c+1$.
Since $|R|,|X|\leq\lambda$, there is a free resolution
\[
\cdots\longrightarrow F_1\xrightarrow{\partial_1}F_0
\xrightarrow{\partial_0}X\longrightarrow0
\]
with $F_i$ of rank at most $\lambda$ for all $i$.
In particular,
$|\operatorname{Ker}\partial_n|\leq\lambda$.
By Proposition~\ref{lem:finite-test-vanishing}
for $r=n+1$, we have
$\operatorname{Ext}_R^{n+1}
(X,\operatorname{Ker}\partial_n)=0$.
It follows from dimension shifting that
\[
\operatorname{Ext}_R^1
(\operatorname{Im}\partial_n,\operatorname{Ker}\partial_n)
\cong
\operatorname{Ext}_R^{n+1}
(X,\operatorname{Ker}\partial_n)=0.
\]
Hence the short exact sequence
\[
0\longrightarrow\operatorname{Ker}\partial_n
\longrightarrow F_n\longrightarrow
\operatorname{Im}\partial_n\longrightarrow0
\]
splits. Thus $\operatorname{Im}\partial_n$ is projective,
and $\operatorname{pd}_R X\leq n$.

Let $G\in\mathcal{QGP}(R)$, and let $\mu$ be an
infinite cardinal with $|R|,|G|\leq\mu$.
Note that every $Q\in\mathscr Q_\mu$ is
quasi-projective. By Proposition~\ref{TTTT}\textup{(1)},
we have $\operatorname{Ext}_R^i(G,Q)=0$ for all
$Q\in\mathscr Q_\mu$ and $i\geq1$.
The first assertion for $n=0$ and $\lambda=\mu$
implies that $G$ is projective. The reverse inclusion
follows from the definition.
\end{proof}
\begin{exam}\label{ex:node-test}
Let $k$ be a field, $R=k[[x,y]]/(xy)$ and
$\mathfrak m=(x,y)$. For every infinite cardinal
$\lambda\geq|R|$, set
\[
\mathscr Q_\lambda=
\bigl\{(R/(x))^{(\lambda)},
(R/(y))^{(\lambda)},
(R/\mathfrak m)^{(\lambda)}\bigr\}.
\]
Let $X$ be an $R$-module with $|X|\leq\lambda$
and $n$ a nonnegative integer. Then
$\operatorname{pd}_R X\leq n$ if and only if
$\operatorname{Ext}_R^{n+i}(X,Q)=0$ for all
$Q\in\mathscr Q_\lambda$ and $1\leq i\leq3$.
\end{exam}
\begin{proof}
Let $T=\mathfrak m$. We have
$\mathfrak m=(x)\oplus(y)$, where
\[
(x)\cong R/(y)\cong k[[x]]
\quad\text{and}\quad
(y)\cong R/(x)\cong k[[y]].
\]
The relation $xy=0$ implies that the image of any
homomorphism $(x)\to(y)$ is annihilated by $y$.
Since multiplication by $y$ is injective on $(y)$,
$\operatorname{Hom}_R((x),(y))=0$.
Similarly, $\operatorname{Hom}_R((y),(x))=0$.
It follows that
\[
\operatorname{End}_R(T)
\cong
\operatorname{End}_R((x))\times
\operatorname{End}_R((y))
\cong k[[x]]\times k[[y]].
\]
Since $k[[x]]$ and $k[[y]]$ are discrete valuation
rings, $\operatorname{r.gldim}\operatorname{End}_R(T)=1$.
For $f\in\operatorname{Hom}_R(T,R)$, $R$-linearity
implies $yf(x)=f(yx)=0$ and $xf(y)=f(xy)=0$.
Hence $f(x)\in\operatorname{ann}_R(y)=(x)$ and
$f(y)\in\operatorname{ann}_R(x)=(y)$.
Since $T=(x)+(y)$, $f(T)\subseteq\mathfrak m$.
The inclusion $T\longrightarrow R$ has image
$\mathfrak m$, and hence
$\operatorname{tr}_R(T)=\mathfrak m$.

The filtration $0\subset(x)\subset T$ has cyclic
factors $R/(y)$ and $R/(x)$, and
$R/\operatorname{tr}_R(T)\cong k$.
Thus the trace step $R\twoheadrightarrow k$ has
$d_0=1$ and $c=2$, with associated test family
$\mathscr Q_\lambda$.
If $\operatorname{pd}_R X\leq n$, the stated Ext groups
vanish. The reverse implication follows from
Theorem~\ref{thm:finite-test}.
\end{proof}

The preceding results establish
$\mathcal{QGP}(R)=\operatorname{Proj}R$
for several classes of rings. We conclude with
the following question.

\begin{problem}\label{prob:nonprojective-qgp}
Does there exist an associative ring $R$ such that
$\mathcal{QGP}(R)\ne\operatorname{Proj}R$?
\end{problem}

If a nonprojective $G\in\mathcal{QGP}(R)$ exists,
then $\QpdR{G}=\infty$ by
Proposition~\ref{comparison}. For commutative $R$,
Corollary~\ref{cor:fg-qgp} further implies that
$G$ is not finitely generated.

	\end{document}